\theoremstyle{plain}
\newtheorem{theorem}{Theorem}
\newtheorem{proposition}[theorem]{Proposition}
\newtheorem{lemma}[theorem]{Lemma}
\newtheorem{corollary}[theorem]{Corollary}
\newtheorem{definition}[theorem]{Definition}
\theoremstyle{definition}
\newtheorem{remark}[theorem]{Remark}
\numberwithin{equation}{section}
\numberwithin{theorem}{section}
\def\N{{\mathbb N}}
\def\R{{\mathbb R}}
\newcommand{\eps}{\varepsilon}
\newcommand{\dd}[0]{\mathrm{d}}
\newcommand{\ud}[0]{\,\mathrm{d}}
\newcommand{\vertiii}[1]{{\left\vert\kern-0.25ex\left\vert\kern-0.25ex\left\vert #1
    \right\vert\kern-0.25ex\right\vert\kern-0.25ex\right\vert}}
\begin{document}

\title[Burkholder-Davis-Gundy inequalities in UMD Banach spaces]
{Burkholder-Davis-Gundy inequalities\\
in UMD Banach spaces}

\author{Ivan Yaroslavtsev}
\address{Max Planck Institute for Mathematics in the Sciences\\
Inselstra{\ss}e 22\\
04103 Leipzig\\
Germany}
\address{Delft Institute of Applied Mathematics\\
Delft University of Technology \\ P.O. Box 5031\\ 2600 GA Delft\\The
Netherlands}
\email{yaroslavtsev.i.s@yandex.ru}

\begin{abstract}
In this paper we prove Burkholder-Davis-Gundy inequalities for a general martingale $M$ 
with values in a UMD Banach space $X$. Assuming that $M_0=0$, we show that the following two-sided inequality holds for all $1\leq p<\infty$:
\begin{align}\label{eq:main}\tag{{$\star$}}
 \mathbb E \sup_{0\leq s\leq t} \|M_s\|^p \eqsim_{p, X} \mathbb E  \gamma([\![M]\!]_t)^p ,\;\;\; t\geq 0.
\end{align}
Here $ \gamma([\![M]\!]_t) $ is the $L^2$-norm of the unique Gaussian measure on $X$ having  
$[\![M]\!]_t(x^*,y^*):= [\langle M,x^*\rangle, \langle M,y^*\rangle]_t$ 
as its covariance bilinear form. This extends
to general UMD spaces a recent result by Veraar and the author, where a pointwise version of 
\eqref{eq:main} was proved for UMD Banach functions spaces $X$.

We show that for continuous martingales, \eqref{eq:main} holds for all $0<p<\infty$, and that for  
purely discontinuous martingales the right-hand side of \eqref{eq:main} 
can be expressed more explicitly in terms of the jumps of $M$. For martingales with independent increments, \eqref{eq:main} is shown to hold more generally in reflexive Banach spaces $X$ with finite cotype.
In the converse direction, we show that the validity of \eqref{eq:main} for arbitrary martingales 
implies the UMD property for $X$.

As an application we prove various It\^o isomorphisms for vector-valued stochastic integrals with respect to general martingales, which extends earlier results by van Neerven, Veraar, and Weis for vector-valued stochastic integrals with respect to a Brownian motion. We also provide It\^o isomorphisms for vector-valued stochastic integrals with respect to compensated Poisson and general random measures.
\end{abstract}

\keywords{Burkholder-Davis-Gundy inequalities, UMD Banach spaces, It\^o isomorphism, Gaussian measures, random measures, Banach function spaces}

\subjclass[2010]{60G44, 60H05, 28C20 Secondary: 60G57, 46B42}

\maketitle

\tableofcontents

\section{Introduction}

In the celebrated paper \cite{BDG}, Burkholder, Davis, and Gundy proved that if $M = (M_t)_{t\ge 0}$ is a real-valued martingale satisfying $M_0=0$, then for all $1\leq p<\infty$ and $t\ge 0$ one has the two-sided
inequality 
\begin{equation}\label{eq:BDGreal-valyuedintro}
 \mathbb E \sup_{0\leq s\leq t} |M_s|^p \eqsim_p \mathbb E [M]_t^{\frac p2},
\end{equation}
where $[M]$ is the quadratic variation of $M$, i.e., 
\begin{equation}\label{eq:defquadvarintro}
 [M]_t  := \mathbb P-
 \lim_{\textrm{mesh}(\pi)\to 0}\sum_{n=1}^N |M(t_n)-M(t_{n-1})|^2,
\end{equation}
where the limit in probability is taken over partitions $\pi = \{0=t_0 < \ldots < t_N = t\}$
whose mesh approaches $0$. 
Later, Burkholder \cite{Burk91,Burk87} and Kallenberg and Sztencel \cite{KalS91} extended \eqref{eq:BDGreal-valyuedintro} to Hilbert space-valued martingales (see also \cite{MarRo16}). 
They showed that if $M$ is a martingale with values in a Hilbert space $H$ satisfying $M_0=0$, then for all $1\leq p<\infty$
and $t\geq 0$ one has
\begin{equation}\label{eq:BDGHS-valyuedintro}
 \mathbb E \sup_{0\leq s\leq t} \|M_s\|^p \eqsim_p \mathbb E [M]_t^{\frac p2},
\end{equation}
where the quadratic variation $[M]$ is defined as in \eqref{eq:defquadvarintro} with absolute values replaced by norms in $H$. 
A further result along these lines was obtained recently by Veraar and the author
\cite{VY18}, who showed that if $M$ is an $L^p$-bounded martingale, $1<p<\infty$, with $M_0=0$, 
that takes values in a UMD Banach function space $X$ over a measure space $(S,\Sigma,\mu)$ (see Section \ref{subsec:discreteBDG} and \ref{subsec:UMDBanachfs} for the definition), 
then for all $t\geq 0$:
\begin{equation}\label{eq:BDGBFSINTRO}
  \mathbb E \sup_{0\leq s\leq t} \|M_s(\sigma)\|^p \eqsim_{p, X} \mathbb E \bigl\|[M(\sigma)]_t^{\frac 12}\bigr\|^{ p},
\end{equation}
where the quadratic variation $[M(\sigma)]_t$ is considered pointwise in $\sigma \in S$. 
Although this inequality seems to be particularly useful from a practical point of view, it does not give any hint how to work with a general Banach space since not every (UMD) Banach space has a Banach function space structure (e.g.\ noncommutative $L^q$-spaces).

Notice that \eqref{eq:BDGHS-valyuedintro}-type inequalities obtained for general Banach spaces could be of big interest in the area of mathematical physics for the following two reasons. First, vector-valued stochastic analysis is closely tied to vector-valued harmonic analysis; in particular, inequalities of the form \eqref{eq:BDGHS-valyuedintro} could yield sharp bounds for {\em Fourier multipliers} (i.e.\ operators of the form $f\mapsto \mathcal F^{-1}(m\mathcal F f)$, where $\mathcal F$ is the Fourier transform, $\mathcal F^{-1}$ is its inverse, and $m$ is a bounded function). Such operators acting on $L^p$, Sobolev, H\"older, and Besov spaces naturally appear in PDE theory while working with the frequency space (see e.g.\ \cite{Kak70,BChD11,HNVW1,HNVW3,LinPhD,KL04}). A notable example of such an interaction was demonstrated by Bourgain \cite{Bour83} and Burkholder \cite{Burk81} in the case of the Hilbert transform (see also \cite{OY18,Y17FourUMD,Os12}). 

Second, as we will show in Section \ref{subsec:appandmis}, \eqref{eq:BDGHS-valyuedintro} (and its Banach space-valued analogue \eqref{eq:thmBDGgeneralUMDintro}) provides us with sharp bounds for Banach space-valued stochastic integrals with respect to a general martingale. This in turn might be helpful for showing solution existence and uniqueness together with basic $L^p$ estimates for SPDEs containing nongaussian noise regularly exploited in models in physics and economics (such as $\alpha$-stable or general L\'evy processes, see e.g.\ \cite{Fa63,HPZh95}). There is a rich set of instruments (see e.g.\ those for stochastic evolution equations with Wiener noise explored  by van Neerven, Veraar, and Weis in \cite{NVW1}) which could help one to convert Burkholder-Davis-Gundy inequalities and stochastic integral estimates into the corresponding assertions needed. We refer the reader to Section \ref{subsec:appandmis} and \cite{NVW,DPZ,NVW1,Kr97,Kry,GK1,VY16} for further details on stochastic integration in infinite dimensions and its applications in SPDEs.

\medskip

In connection with all of the above the following natural question is rising up.
{\em Given a Banach space $X$. Is there an analogue of \eqref{eq:BDGHS-valyuedintro} for a general $X$-valued local martingale $M$ and how then should the right-hand side of \eqref{eq:BDGHS-valyuedintro} look like?}
In the current article we present the following complete solution to this problem for local martingales $M$ with values in a UMD Banach space $X$.

\begin{theorem}\label{thm:BDGgeneralUMDintro}
 Let $X$ be a UMD Banach space. Then for any local martingale $M:\mathbb R_+\times \Omega \to X$ with $M_0=0$ and any $t\geq 0$ the covariation bilinear form $[\![M]\!]_t$ is well-defined and bounded almost surely, and for all $1\leq p<\infty$ we have
 \begin{equation}\label{eq:thmBDGgeneralUMDintro}
   \mathbb E \sup_{0\leq s\leq t}\|M_s\|^p \eqsim_{p,X} \mathbb E  \gamma([\![M]\!]_t)^p .
 \end{equation}
\end{theorem}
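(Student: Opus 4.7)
The plan is to reduce \eqref{eq:thmBDGgeneralUMDintro} to a Gaussian decoupling inequality in the UMD space $X$ applied to discrete martingales, and then transfer the result to continuous time and to local martingales by approximation and localization. For a discrete martingale with differences $(d_k)_{k\leq n}$, the bilinear form $[\![M]\!]_n(x^*,y^*)$ coincides with $\sum_k \langle d_k,x^*\rangle\langle d_k,y^*\rangle$, so the corresponding conditionally centred Gaussian measure on $X$ is the law of $\sum_k g_k d_k$ with $(g_k)$ an independent standard Gaussian sequence defined on an auxiliary probability space. Since Kahane's inequality renders all $L^q$-moments of the norm of such a conditionally Gaussian vector equivalent, the theorem reduces in this setting to the Gaussian decoupling estimate
\begin{equation*}
 \E\Bigl\|\sum_k d_k\Bigr\|^p \eqsim_{p,X} \E\Bigl\|\sum_k g_k d_k\Bigr\|^p,
\end{equation*}
which follows from the UMD property (giving Rademacher randomization of tangent martingale differences) combined with the standard Gaussian--Rademacher comparison valid in any Banach space.

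To extend to continuous time I would sample $M$ on a partition $\pi_N$ of $[0,t]$ with mesh tending to zero, apply the discrete estimate to $(M_{t_j})$, and pass to the limit. Doob's maximal inequality together with the c\`adl\`ag regularity of $M$ handles the left-hand side; on the right, the Riemann-type bilinear forms approximating $[\![M]\!]_t$ must be shown to converge in the $\gamma$-norm. It is natural to invoke the Meyer decomposition $M = M^c + M^d$ into the continuous and the purely discontinuous part, and to analyse each piece with tools adapted to its structure: on $M^c$ a Dubins--Schwarz-type time change together with Brownian-motion techniques from $\gamma$-radonifying theory can be used, while on $M^d$ the right-hand side admits a more explicit description in terms of the associated jump random measure and the corresponding Poisson It\^o isomorphisms. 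A stopping-time localization then upgrades the inequality from integrable martingales to arbitrary local martingales.

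The principal obstacle is not the discrete inequality but the existence and integrability of the limiting object $\gamma([\![M]\!]_t)$: one must show that almost surely the form $[\![M]\!]_t$ is bounded on $X^*\times X^*$ and that the associated centred Gaussian cylindrical measure on $X$ extends to a genuine Radon measure of finite second moment, i.e.\ that it is $\gamma$-radonifying. In a general UMD space this is not automatic from mere boundedness of the covariance, so the argument must be run in reverse: the two-sided discrete estimate bounds $\E\gamma([\![M]\!]^N_t)^p$ uniformly in the mesh by $\E\sup_s\|M_s\|^p$, and a Fatou/monotone-convergence argument, combined with the compatibility of $\gamma$-norms with conditional expectations and with orthogonal decompositions (so that the cross terms between $M^c$ and $M^d$ vanish), should then upgrade this uniform bound to $\gamma$-radonification of $[\![M]\!]_t$ and convergence of $\gamma([\![M]\!]^N_t)$ to $\gamma([\![M]\!]_t)$ in $L^p(\Omega)$, closing the loop on both sides of \eqref{eq:thmBDGgeneralUMDintro}.
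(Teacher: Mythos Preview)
Your discrete reduction is correct and matches the paper's Theorem \ref{thm:BDGgendiscmart}: UMD gives Rademacher decoupling, finite cotype converts Rademacher to Gaussian, and Kahane collapses the moments.

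The continuous-time passage, however, diverges from the paper and contains a real gap. The paper does \emph{not} use the Meyer--Yoeurp decomposition at all in the proof; instead it proceeds by \emph{finite-dimensional approximation of the target space}. First, for $X$ finite-dimensional, it extends $\gamma(\cdot)$ from nonnegative symmetric bilinear forms to \emph{all} symmetric bilinear forms and proves this extension is continuous with $\gamma(V)^2\lesssim_X\|V\|$. This is what makes $\gamma(V_N)\to\gamma([\![M]\!]_t)$ work: the difference $V_N-[\![M]\!]_t$ is not nonnegative, so neither monotonicity nor Fatou applies, and one genuinely needs $|\gamma(V_N)-\gamma([\![M]\!]_t)|\leq\gamma(V_N-[\![M]\!]_t)\lesssim_X\|V_N-[\![M]\!]_t\|^{1/2}\to 0$. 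Second, for infinite-dimensional $X$, the paper takes an increasing sequence of finite-dimensional subspaces $Y_n\subset X^*$, applies Step~1 to the $Y_n^*$-valued martingales $P_n^*M$ with constants depending only on $\beta_{2,X}$ (not on $\dim Y_n$), and then uses genuine monotonicity $\gamma([\![P_n^*M]\!]_t)\nearrow\gamma([\![M]\!]_t)$ in $n$ to both \emph{construct} $[\![M]\!]_t$ and pass to the limit.

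Your proposed route has two specific problems. A Dubins--Schwarz time change is a one-dimensional phenomenon: already for $\mathbb R^d$-valued continuous martingales there is no single time change reducing $M^c$ to a Brownian motion, so this cannot drive the convergence of $\gamma(V_N)$ for the continuous part. And the Poisson-type It\^o isomorphisms you invoke for $M^d$ in a general UMD space are, in this paper, \emph{consequences} of Theorem~\ref{thm:BDGgeneralUMDintro} (Subsection~\ref{subsec:ItoisomPrmandgrm}), so using them here would be circular. More fundamentally, your Fatou/monotone-convergence plan does not fit the situation: the partition forms $V_N$ are not monotone in $N$, and $\gamma(\cdot)^2$ is not additive under orthogonal decompositions except in Hilbert space (Subsection~\ref{subsec:gammaisnotanirmtuchikakludi}), so ``cross terms vanish'' does not translate into additivity of the Gaussian characteristic. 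The missing idea is precisely the continuity of the extended $\gamma$ on signed forms in finite dimensions, followed by finite-dimensional exhaustion of $X^*$.
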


Here $\gamma(V)$, where $V:X^*\times X^* \to \mathbb R$ is a given nonnegative symmetric bilinear form, is the $L^2$-norm of an $X$-valued Gaussian random variable $\xi$ with 
$$
\mathbb E \langle \xi, x^*\rangle^2 = V(x^*, x^*), \;\;\;x^*\in X^*.
$$ 
We call $\gamma(V)$ the {\em Gaussian characteristic} of $V$ (see Section \ref{sec:Gaussiancharacteristic}).

Let us explain briefly the main steps of the proof of Theorem \ref{thm:BDGgeneralUMDintro}. This discussion will also clarify the meaning 
of the term on the right-hand side, which is equivalent 
to the right-hand side of \eqref{eq:BDGHS-valyuedintro} if $X$ is a Hilbert space, and of \eqref{eq:BDGBFSINTRO} (up to a multiplicative constant) 
if $X$ is a UMD Banach function space.

In Section \ref{subsec:discreteBDG} we start by proving the discrete-time version of Theorem \ref{thm:BDGgeneralUMDintro}, which takes the following simple form
\begin{equation}\label{eq:genBDGdiscreteintro}
  \mathbb E \sup_{1\le m\le N} \Bigl\|\sum_{n=1}^m d_n\Bigr\|^p \eqsim_{p,X} \mathbb E \Bigl(\mathbb E_\gamma  \Big\|\sum_{n=1}^N \gamma_n d_n\Bigr\|^2 \Bigr)^{\frac p2},
\end{equation}
where $(d_n)_{n= 1}^N$ is an $X$-valued martingale difference sequence and $(\gamma_n)_{n= 1}^N$
is a sequence of independent standard Gaussian random variables defined on a probability space $(\Omega_\gamma,\mathbb  P_\gamma)$. 
\eqref{eq:genBDGdiscreteintro} follows from a decoupling inequality due to Garling \cite{Gar85} and a martingale transform inequality due to Burkholder \cite{Burk86} (each of which holds if and only if $X$ has the UMD property) together with the equivalence of Rademacher and Gaussian random sums with values in spaces with finite cotype due to Maurey and Pisier (see \cite{MP76}).

Theorem \ref{thm:BDGgeneralUMDintro} is derived from \eqref{eq:genBDGdiscreteintro} by finite-dimensional approximation and discretization. This is a rather intricate procedure and depends on 
some elementary, but nevertheless important properties of a Gaussian characteristic $\gamma(\cdot)$. 
In particular in Section \ref{sec:Gaussiancharacteristic} we show that for a finite dimensional Banach space $X$ there exists a proper continuous extension of the Gaussian characteristic to all (not necessarily nonnegative) symmetric bilinear forms $V:X^* \times X^* \to \mathbb R$, with the bound 
$$
(\gamma(V))^2\lesssim_X  \sup_{\|x^*\|\leq 1}V(x^*, x^*).
$$ 

Next, in Section \ref{sec:BDG-cont}, under the assumptions of Theorem \ref{thm:BDGgeneralUMDintro} we show that $M$ has a well-defined {\em covariation bilinear form}, i.e.\ for each $t\geq 0$ and for almost all $\omega\in \Omega$ there exists a symmetric bilinear form $[\![M]\!]_t(\omega):X^* \times X^* \to \mathbb R$ such that for all $x^*, y^*\in X^*$ one has
\[
 [\![M]\!]_t(x^*, y^*) = [\langle M, x^*\rangle,\langle M, y^*\rangle]_t \;\;\; \text{a.s.}
\]
Existence of such a covariance bilinear form in the nonhilbertian setting used to be an open problem since 1970's (see e.g.\ Meyer \cite[p.\ 448]{Mey77} and M\'{e}tivier \cite[p.\ 156]{MetSemi}; see also \cite{GP74,VY16,SC02,BDMKR}). In Section \ref{sec:BDG-cont} we show that such a covariation exists in the UMD case. Moreover, in Proposition \ref{prop:cadlag[[M]]} we show that the process $[\![M]\!]$ has an {\em increasing adapted  c\`adl\`ag} version.

Next we prove that the bilinear form $[\![M]\!]_t(\omega)$ has a finite Gaussian characteristic $\gamma([\![M]\!]_t)$ for almost all $\omega\in \Omega$.
After these preparations we prove Theorem \ref{thm:BDGgeneralUMDintro}. We also show that the UMD property is necessary for the conclusion of the theorem to hold true (see Subsection \ref{subsec:necofUMDgeneralrhs}).

\smallskip
In Section \ref{sec:ramifications} we develop three ramifications of our main result:
\begin{itemize}
 \item if $M$ is continuous, the conclusion of Theorem \ref{thm:BDGgeneralUMDintro}
holds for all $0< p< \infty$. 
 \item if $M$ is purely discontinuous, the theorem can be reformulated in terms of the jumps of $M$. 
\item if $M$ has independent increments, the UMD assumption on $X$ can be weakened to reflexivity and finite cotype.
\end{itemize}
The first two cases are  particularly important in view of the fact that any UMD space-valued local martingale has a unique {\em Meyer-Yoeurp decomposition} into a sum of a continuous local martingale and a purely discontinuous local martingale (see \cite{Y17MartDec,Y17GMY}).

\smallskip
A reasonable part of the paper, namely Section \ref{subsec:appandmis}, is devoted to applications of Theorem \ref{thm:BDGgeneralUMDintro} and results related to Theorem \ref{thm:BDGgeneralUMDintro}. Let us outline some of them. In Subsection \ref{subsec:applicItoisomgenmart} we develop a
theory of vector-valued stochastic integration.
Our starting point is a result of 
van Neerven, Veraar, and Weis \cite{NVW}. They proved that if $W_H$ is a cylindrical Brownian motion in a Hilbert space $H$ and $\Phi:\mathbb R_+\times \Omega \to \mathcal L(H, X)$ is an elementary predictable process, then for all $0<p<\infty$ and $t\ge 0$ one has the two-sided inequality
\begin{equation}\label{eq:BDGNVWintro}
 \mathbb E \sup_{0\leq s\leq t} \Bigl\|\int_0^s \Phi \ud W_H\Bigr\|^p \eqsim_{p, X} \mathbb E \|\Phi\|_{\gamma(L^2([0,t]; H),X)}^p.
\end{equation}
Here $\|\Phi\|_{\gamma(L^2([0,t]; H),X)}$ is the {\em $\gamma$-radonifying norm} of $\Phi$ 
as an operator from a Hilbert space $L^2([0,t]; H)$ into $X$
(see \eqref{eq:defofgammanormsnove} for the definition); this norm coincides with the Hilbert-Schmidt norm 
given $X$ is a Hilbert space. This result was extended  to continuous local martingales in \cite{VY16,Ver}. 

Theorem \ref{thm:BDGgeneralUMDintro} directly implies \eqref{eq:BDGNVWintro}.
More generally, if $M = \int \Phi \ud \widetilde M$ for some $H$-valued martingale $\widetilde M$ and elementary predictable process $\Phi:\mathbb R_+\times \Omega \to \mathcal L(H, X)$, then
it follows from Theorem \ref{thm:BDGgeneralUMDintro} that for all $1\leq p<\infty$ and $t\ge 0$ one has
\begin{equation}\label{eq:stochintwrtgenmartINTRO}
 \mathbb E \sup_{0\leq s\leq t} \Bigl\|\int_0^s\Phi\ud \widetilde M\Bigr\|^p \eqsim_{p,X} \mathbb E \|\Phi q_{\widetilde M}^{1/2}\|^p_{\gamma(L^2(0,t;[\widetilde M]), X)}.
\end{equation}
Here $q_{\widetilde M}$ is the quadratic variation derivative of $\widetilde M$
and $\gamma(L^2(0,t;[\widetilde M]), X)$ is a suitable space of $\gamma$-radonifying operator associated
with $\widetilde M$ (see Subsection \ref{subsec:applicItoisomgenmart} for details).
This represents a significant improvement of \eqref{eq:BDGNVWintro}.

In Subsection \ref{subsec:ItoisomPrmandgrm} we apply our results to vector-valued stochastic integrals with respect to a compensated Poisson random measure $\widetilde N$. We show that if $N$ is a Poisson random measure on $\mathbb R_+ \times J$ for some measurable space $(J, \mathcal J)$, $\nu$ is its compensator, $\widetilde N := N - \nu$ is the corresponding compensated Poisson random measure, then for any UMD Banach space $X$, for any elementary predictable $F:J\times \mathbb R_+ \times \Omega \to X$, and for any $1\leq p<\infty$ one has that
\begin{equation}\label{eq:ItoisomforPoissonINTRO}
    \mathbb E \sup_{0\leq s\leq t} \Bigl\|\int_{J\times [0,s]} F\ud \widetilde N\Bigr\|^p \eqsim_{p, X} \mathbb E \|F\|^p_{\gamma(L^2(J\times [0,t]; N), X)},\;\;\;\; t\geq 0.
\end{equation}
We also show that \eqref{eq:ItoisomforPoissonINTRO} holds if one considers a general quasi-left continuous random measure $\mu$ instead of $N$. 

In Subsection \ref{sec:martdom} we prove the following {\em martingale domination inequality}: for all local martingales $M$ and $N$ with values in a UMD Banach space $X$ such that 
$$
\|N_0\|\leq \|M_0\|\;\;\;\text{a.s.,}$$
and
$$[\langle N, x^*\rangle]_{\infty} \leq [\langle M, x^*\rangle]_{\infty} \ \ \hbox{almost surely, for all $x^*\in X^*$},$$
for all $1\le p<\infty $ we have that
\begin{equation*}
  \mathbb E \sup_{t\geq 0} \|N_t\|^p \lesssim_{p,X} \mathbb E \sup_{t\geq 0} \|M_t\|^p.
\end{equation*}
This extends {\em weak differential subordination} $L^p$-estimates obtained in \cite{Y17FourUMD,Y17MartDec} (which used to be known to hold only for $1<p<\infty$, see \cite{Y17FourUMD,Y17MartDec,OY18}).

Finally, in Section \ref{subsec:UMDBanachfs}, we prove that for any UMD Banach function space $X$ over a measure space $(S, \Sigma, \mu)$, that any $X$-valued local martingale $M$ has a pointwise local martingale version $M(\sigma)$, $\sigma\in S$, such that if $1\leq p<\infty$, then for $\mu$-almost all $\sigma\in S$ one has
\[
 \mathbb E \sup_{0\leq s\leq t} \|M_s(\sigma)\|^p \eqsim_{p, X} \mathbb E \bigl\|[M(\sigma)]_t^{\frac 12}\bigr\|^{ p}
\]
for all $t\geq 0$, which extends \eqref{eq:BDGBFSINTRO} to the case $p=1$ and general local martingales. 

\smallskip

In conclusion we wish to notice that it remains open whether one can find a {\em predictable} right-hand side in \eqref{eq:thmBDGgeneralUMDintro}: so far such a predictable right-hand side was explored only in the real-valued case and in the case $X = L^q(S)$, $1<q<\infty$, see {\em Burkholder-Novikov-Rosenthal inequalities} in the forthcoming paper \cite{DMY18}. This problem might be resolved via using recently discovered {\em decoupled tangent martingales}, see \cite{Y19}.

\section{Burkholder-Davis-Gundy inequalities: the discrete time case}\label{subsec:discreteBDG}

Let us show discrete Burkholder-Davis-Gundy inequalities.  First we will provide the reader with the definitions of UMD Banach spaces and $\gamma$-radonifying operators.
A Banach space $X$ is called a {\em UMD space} if for some (equivalently, for all)
$p \in (1,\infty)$ there exists a constant $\beta>0$ such that
for every $n \geq 1$, every martingale
difference sequence $(d_j)^n_{j=1}$ in $L^p(\Omega; X)$, and every $\{-1,1\}$-valued sequence
$(\varepsilon_j)^n_{j=1}$
we have
\[
\Bigl(\mathbb E \Bigl\| \sum^n_{j=1} \varepsilon_j d_j\Bigr\|^p\Bigr )^{\frac 1p}
\leq \beta \Bigl(\mathbb E \Bigl \| \sum^n_{j=1}d_j\Bigr\|^p\Bigr )^{\frac 1p}.
\]
The least admissible constant $\beta$ is denoted by $\beta_{p,X}$ and is called the {\em UMD constant}. It is well known (see \cite[Chapter 4]{HNVW1}) that $\beta_{p, X}\geq p^*-1$ and that $\beta_{p, H} = p^*-1$ for a Hilbert space $H$. 
 We refer the reader to \cite{Burk01,HNVW1,Rubio86,Pis16,LVY18,HNVW2,GM-SS} for details.
 
 Let $H$ be a separable Hilbert space, $X$ be a Banach space, $T\in \mathcal L(H, X)$. Then $T$ is called {\em $\gamma$-radonifying} if
\begin{equation}\label{eq:defofgammanormsnove}
 \|T\|_{\gamma(H,X)} := \Bigl(\mathbb E \Bigl\|\sum_{n=1}^{\infty} \gamma_n Th_n\Bigr\|^2\Bigr)^{\frac 12} <\infty,
\end{equation}
where $(h_n)_{n\geq 1}$ is an orthonormal basis of $H$, and $(\gamma_n)_{n\geq 1}$ is a sequence of standard Gaussian random variables (otherwise we set $\|T\|_{\gamma(H,X)}:=\infty$). Note that $\|T\|_{\gamma(H,X)}$ {\em does not depend} on the choice of $(h_n)_{n\geq 1}$ (see \cite[Section 9.2]{HNVW2} and \cite{Ngamma} for details). Often we will call $\|T\|_{\gamma(H, X)}$ the {\em $\gamma$-norm} of $T$.
$\gamma$-norms are exceptionally important in analysis as they are easily computable and enjoy a number of useful properties such as the ideal property, $\gamma$-multiplier theorems, Fubini-type theorems, etc., see \cite{HNVW2,Ngamma}.

Now we are able state and prove discrete UMD-valued Burkholder-Davis-Gundy inequalities.

\begin{theorem}\label{thm:BDGgendiscmart}
 Let $X$ be a UMD Banach space, $(d_n)_{n\geq 1}$ be an $X$-valued martingale difference sequence. Then for any $1\leq p<\infty$
 \begin{equation}\label{eq:discBDGUMDBanach}
   \mathbb E \sup_{m\geq 1}\Bigl\|\sum_{n=1}^{m} d_n\Bigr\|^p \eqsim_{p, X} \mathbb E \|(d_n)_{n=1}^{\infty}\|_{\gamma(\ell^2, X)}^p.
 \end{equation}
\end{theorem}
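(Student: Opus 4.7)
The plan is to observe first that, by the Kahane--Khintchine inequality applied conditionally on $(d_n)$ to the Gaussian random sum, the right-hand side of \eqref{eq:discBDGUMDBanach} is equivalent (with constants depending only on $p$) to
$$
 \mathbb E\, \mathbb E_\gamma \Bigl\|\sum_{n=1}^\infty \gamma_n d_n\Bigr\|^p.
$$
Hence it suffices to prove the variant of \eqref{eq:discBDGUMDBanach} in which the exponent $p$ sits inside the Gaussian expectation. By a truncation/monotone convergence argument I may further reduce to a finite martingale difference sequence $(d_n)_{n=1}^N$ bounded in $L^p(\Omega;X)$.

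For the core range $1<p<\infty$ my approach is to run the chain
\begin{align*}
 \mathbb E \sup_{1\le m\le N} \Bigl\|\sum_{n=1}^m d_n\Bigr\|^p
 &\eqsim_p \mathbb E \Bigl\|\sum_{n=1}^N d_n\Bigr\|^p \\
 &\eqsim_{p,X} \mathbb E \Bigl\|\sum_{n=1}^N \varepsilon_n d_n\Bigr\|^p \\
 &\eqsim_{p,X} \mathbb E\, \mathbb E_\gamma \Bigl\|\sum_{n=1}^N \gamma_n d_n\Bigr\|^p,
\end{align*}
where $(\varepsilon_n)$ is an independent Rademacher sequence. The first equivalence is Doob's maximal inequality (valid since $p>1$). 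The second is Garling's randomization inequality for UMD spaces: pass to a decoupled tangent sequence $(\tilde d_n)$, use the UMD decoupling characterization to compare the $L^p$-norms of $\sum d_n$ and $\sum \tilde d_n$, and exploit the conditional independence of $(\tilde d_n)$ to symmetrize in the Rademacher variables. The third equivalence uses Maurey--Pisier together with Kahane--Khintchine: since UMD implies finite cotype, the Rademacher and Gaussian sums $\sum \varepsilon_n d_n$ and $\sum \gamma_n d_n$ have equivalent $L^2$-norms conditionally on $(d_n)$, and Kahane--Khintchine on each side converts between the $L^2$- and $L^p$-norms in the $\varepsilon$ and $\gamma$ variables before taking the outer expectation.

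The main obstacle is the endpoint $p=1$, where Doob's inequality is unavailable and Garling's randomization must be treated with care. I would deal with this via a Davis decomposition of the partial sum martingale $S_m = \sum_{n\le m} d_n$: split $(d_n)$ into a martingale difference sequence $(d_n')$ with uniformly bounded jumps (to which the $p=2$ statement proved above applies, after Chebyshev/Cauchy--Schwarz) plus a remainder $(d_n'')$ whose maximal partial sum is dominated by $\sum_n \sup_{k\le n}\|d_k\|$ and handled directly in $L^1$. Alternatively, one can use a Burkholder-type good-$\lambda$ extrapolation: prove a distributional inequality between $\sup_m \|S_m\|$ and $(\mathbb E_\gamma \|\sum \gamma_n d_n\|^2)^{1/2}$, then integrate out against $\lambda^{p-1}\,d\lambda$ to recover both inequalities simultaneously for all $1\le p<\infty$ in one stroke. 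Either route yields the remaining endpoint, completing \eqref{eq:discBDGUMDBanach}.
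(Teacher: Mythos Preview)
Your argument for $1<p<\infty$ is correct and uses essentially the same ingredients as the paper (UMD randomization, finite cotype to pass between Rademacher and Gaussian sums, Kahane--Khintchine). The difference is in the ordering: you first apply Doob's maximal inequality to drop the supremum, then invoke Garling's decoupling to insert the Rademacher signs. The paper instead appeals to Burkholder's good-$\lambda$ inequality \cite[(8.22)]{Burk86}, which directly gives
\[
 \mathbb E \sup_{m}\Bigl\|\sum_{n=1}^{m} d_n\Bigr\|^p \eqsim_{p,X} \mathbb E\,\mathbb E_r \sup_{m}\Bigl\|\sum_{n=1}^{m} r_n d_n\Bigr\|^p
\]
for \emph{all} $1\le p<\infty$ (indeed for any moderate convex $\phi$), with the supremum still present on both sides. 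The supremum over the Rademacher partial sums is then removed for free by the contraction/L\'evy-type inequality for Rademacher sums, which again holds for all $p$. Thus the paper's chain never invokes Doob and covers $p=1$ without any extra work.

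Because you rely on Doob, you are forced into a separate endpoint argument. Your good-$\lambda$ suggestion is exactly the right idea and is in fact what \cite[(8.22)]{Burk86} encodes; had you started from that, the case split disappears. Your Davis decomposition sketch, on the other hand, is not convincing as written: in the vector-valued setting it is not clear how the $\gamma(\ell^2,X)$-norm of the bounded-jump piece together with the $L^1$ control of the large-jump piece reassemble into the $\gamma$-norm of the original sequence, and the remainder bound $\sum_n\sup_{k\le n}\|d_k\|$ you quote is not the quantity that appears in the standard Davis decomposition. I would drop that branch and simply cite Burkholder's maximal randomization inequality from the outset.
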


For the proof we will need {\em Rademacher random variables}.

\begin{definition}
 A real-valued random variable $r$ is called {\em Rademacher} if $\mathbb P(r=1) = \mathbb P(r=-1) = 1/2$.
\end{definition}

\begin{proof}[Proof of Theorem \ref{thm:BDGgendiscmart}]
Without loss of generality we may assume that there exists $N\geq 1$ such that $d_n=0$ for all $n>N$. Let $(r_n)_{n\geq 1}$ be a sequence of independent Rademacher random variables, $(\gamma_n)_{n\geq 1}$ be a sequence of independent standard Gaussian random variables. Then
 \begin{align}\label{eq:Lpnormofdiscreteintermsofgammanorm}
  \mathbb E \sup_{m\geq 1}\Bigl\|\sum_{n=1}^{m} d_n\Bigr\|^p &\stackrel{(i)}\eqsim_{p, X} \mathbb E   \mathbb E_{r}\sup_{m\geq 1} \Bigl\|\sum_{n=1}^{N} r_n d_n\Bigr\|^p \stackrel{(ii)}\eqsim_{p}\mathbb E\mathbb E_{r} \Bigl\|\sum_{n=1}^{N} r_n d_n\Bigr\|^p\nonumber\\
  &\stackrel{(iii)}\eqsim_{p,X}\mathbb E \mathbb E_{\gamma}\Bigl\|\sum_{n=1}^{N} \gamma_n d_n\Bigr\|^p \stackrel{(iv)}\eqsim_{p}\mathbb E \Bigl(\mathbb E_{\gamma} \Bigl\|\sum_{n=1}^{N} \gamma_n d_n\Bigr\|^2\Bigr)^{\frac p2}\\
  &= \mathbb E \|(d_n)_{n=1}^{\infty}\|_{\gamma(\ell^2, X)}^p,\nonumber
 \end{align}
where $(i)$ follows from \cite[(8.22)]{Burk86}, $(ii)$ holds by \cite[Proposition 6.1.12]{HNVW2}, $(iii)$ follows from \cite[Corollary 7.2.10 and Proposition 7.3.15]{HNVW2}, and $(iv)$ follows from \cite[Proposition 6.3.1]{HNVW2}.
\end{proof}

\begin{remark}\label{rem:deponconstants}
Note that if we collect all the constants in \eqref{eq:Lpnormofdiscreteintermsofgammanorm}, then the final constant will depend only on $p$ and $\beta_{2, X}$ (or $\beta_{q, X}$ for any fixed $1<q<\infty$).
\end{remark}

\begin{remark}\label{rem:c_ppXandC_p'Xareindepofp}
 If we collect all the constants in \eqref{eq:Lpnormofdiscreteintermsofgammanorm} then one can see that those constants behave well as $p\to 1$, i.e.\ for any $1<r<\infty$ there exist positive $C_{r, X}$ and $c_{r,X}$ such that for any $1\leq p\leq r$
 \[
   c_{r, X} \mathbb E \|(d_n)_{n=1}^{\infty}\|_{\gamma(\ell^2, X)}^p \leq \mathbb E \sup_{m\geq 1}\Bigl\|\sum_{n=1}^{m} d_n\Bigr\|^p \leq C_{r, X} \mathbb E \|(d_n)_{n=1}^{\infty}\|_{\gamma(\ell^2, X)}^p.
 \]
\end{remark}

\begin{remark}
 Fix $1<p<\infty$ and a UMD Banach space $X$. By Doob's maximal inequality \eqref{eq:DoobsineqXBanach} and Theorem \ref{thm:BDGgendiscmart} we have that
 \[
 \mathbb E \Bigl\|\sum_{n=1}^{\infty} d_n\Bigr\|^p \eqsim_{p}  \mathbb E \sup_{m\geq 1}\Bigl\|\sum_{n=1}^{m} d_n\Bigr\|^p \eqsim_{p, X}\mathbb E \|(d_n)_{n=1}^{\infty}\|_{\gamma(\ell^2, X)}^p.
 \]
Let us find the constants in the equivalence 
$$
\mathbb E \Bigl\|\sum_{n=1}^{\infty} d_n\Bigr\|^p \eqsim_{p, X}\mathbb E \|(d_n)_{n=1}^{\infty}\|_{\gamma(\ell^2, X)}^p.
$$
Since $X$ is UMD, it has a finite cotype $q$ (see \cite[Definition 7.1.1. and Proposition 7.3.15]{HNVW2}), and therefore by modifying \eqref{eq:Lpnormofdiscreteintermsofgammanorm} (using decoupling inequalities \cite[p.\ 282]{HNVW1} instead of \cite[(8.22)]{Burk86} and \cite[Proposition 6.1.12]{HNVW2}) one can show that
 \begin{multline*}
  \frac{1}{\beta_{p, X}c_{p, X}} \Bigl(\mathbb E \|(d_n)_{n=1}^{\infty}\|_{\gamma(\ell^2, X)}^p\Bigr)^{\frac 1p} \leq \Bigl(\mathbb E\Bigl\|\sum_{n=1}^{m} d_n\Bigr\|^p\Bigr)^{\frac 1p}\\
  \leq 2 \beta_{p, X} \kappa_{p,2} \Bigl(\mathbb E \|(d_n)_{n=1}^{\infty}\|_{\gamma(\ell^2, X)}^p\Bigr)^{\frac 1p},
 \end{multline*}
 where $c_{p, X}$ depends on $p$, the cotype of $X$, and the Gaussian cotype constant of $X$ (see \cite[Proposition 7.3.15]{HNVW2}), while $\kappa_{p,q}$ is the Kahane-Khinchin constant (see \cite[Section 6.2]{HNVW2}).
\end{remark}

\begin{remark}\label{rem:phidisccase}
 Theorem \ref{thm:BDGgendiscmart} can be extended to general convex functions. Indeed, let $X$ be a UMD Banach space, $\phi:\mathbb R_+ \to \mathbb R_+$ be a convex increasing function such that $\phi(0)=0$ and 
 \begin{equation}\label{phi2l<cphil}
  \phi(2\lambda) \leq c\phi(\lambda),\;\;\;\lambda\geq 0,
 \end{equation}
for some fixed $c>0$. Then from a~standard good-$\lambda$ inequality argument due to Burkholder (see \cite[Remark 8.3]{Burk86}, \cite[Lemma 7.1]{Bur73}, and \cite[pp.\ 1000--1001]{Burk81})  we imply that
\begin{align}\label{eq:discBDGforgenconvfunction}
  \mathbb E \phi\Bigl( \sup_{m\geq 1}\Bigl\|\sum_{n=1}^{m} d_n\Bigr\|\Bigr) &\stackrel{(i)}\eqsim_{\phi, X} \mathbb E   \mathbb E_{r}\phi\Bigl(\sup_{m\geq 1} \Bigl\|\sum_{n=1}^{N} r_n d_n\Bigr\|\Bigr) \stackrel{(ii)}\eqsim_{\phi}\mathbb E\mathbb E_{r} \phi\Bigl(\Bigl\|\sum_{n=1}^{N} r_n d_n\Bigr\|\Bigr)\nonumber\\
  &\stackrel{(iii)}\eqsim_{\phi,X}\mathbb E \mathbb E_{\gamma}\phi\Bigl(\Bigl\|\sum_{n=1}^{N} \gamma_n d_n\Bigr\|\Bigr) \stackrel{(iv)}\eqsim_{\phi}\mathbb E \phi\Bigl(\mathbb E_{\gamma} \Bigl\|\sum_{n=1}^{N} \gamma_n d_n\Bigr\|\Bigr)\\
  &\stackrel{(v)}\eqsim_{\phi}\mathbb E \phi\Bigl(\Bigl(\mathbb E_{\gamma} \Bigl\|\sum_{n=1}^{N} \gamma_n d_n\Bigr\|^2\Bigr)^{\frac 12}\Bigr)= \mathbb E\phi\Bigl( \|(d_n)_{n=1}^{\infty}\|_{\gamma(\ell^2, X)}\Bigr),\nonumber
 \end{align}
 where $(i)$ and $(iii)$ follow from good-$\lambda$ inequalities \cite[(8.22)]{Burk86}, $(ii)$ follows from \cite[Proposition 6.1.12]{HNVW2}, $(iv)$ holds by \cite[Corollary 2.7.9]{dlPG}, Doob's maximal inequality \eqref{eq:DoobsineqXBanach}, and \eqref{phi2l<cphil}, and $(v)$ follows from \eqref{phi2l<cphil} and Kahane-Khinchin inequalities \cite[Theorem 6.2.6]{HNVW2}. Note that as in Remark \ref{rem:deponconstants} the final constant in \eqref{eq:discBDGforgenconvfunction} will depend only on $\phi$ and $\beta_{2, X}$ (or $\beta_{q, X}$ for any fixed $1<q<\infty$).
\end{remark}

In the following theorem we show that $X$ having the UMD property is necessary for Theorem \ref{thm:BDGgendiscmart} to hold.

\begin{theorem}\label{thm:necofUMDfordiscBDGtango}
 Let $X$ be a Banach space and $1\leq p<\infty$ be such that \eqref{eq:discBDGUMDBanach} holds for any martingale difference sequence $(d_n)_{n\geq 1}$. Then $X$ is UMD.
\end{theorem}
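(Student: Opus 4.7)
The plan is to extract the UMD martingale transform inequality from the hypothesized two-sided BDG equivalence by exploiting the rotational invariance of the $\gamma$-norm under sign changes.

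Let $(d_n)_{n\ge 1}$ be any $X$-valued martingale difference sequence and let $(\varepsilon_n)_{n\ge 1}$ be any $\{-1,1\}$-valued predictable sequence. Then $(\varepsilon_n d_n)_{n\ge 1}$ is again a martingale difference sequence. The key observation is the pathwise identity
\[
\|(\varepsilon_n d_n)_{n\ge 1}\|_{\gamma(\ell^2,X)} \;=\; \|(d_n)_{n\ge 1}\|_{\gamma(\ell^2,X)} \quad \text{a.s.}
\]
Indeed, conditionally on the $\sigma$-algebra generated by $(d_n,\varepsilon_n)$, the Gaussian series $\sum_n \gamma_n \varepsilon_n d_n$ and $\sum_n \gamma_n d_n$ have the same distribution in $X$, since $(\gamma_n\varepsilon_n)\stackrel{d}{=}(\gamma_n)$ by symmetry of the standard Gaussian; equivalently, multiplication by the signs is an isometry of $\ell^2$ and the $\gamma$-norm is unitarily invariant in the Hilbert variable.

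Feeding this identity into the hypothesized equivalence \eqref{eq:discBDGUMDBanach} applied separately to $(d_n)$ and $(\varepsilon_n d_n)$ yields the \emph{maximal} transform inequality
\begin{equation}\label{eq:max-transform-hypo}
\mathbb E \sup_{m\ge 1}\Bigl\|\sum_{n=1}^m \varepsilon_n d_n\Bigr\|^p \;\lesssim_{p,X}\; \mathbb E \|(d_n)\|_{\gamma(\ell^2,X)}^p \;\lesssim_{p,X}\; \mathbb E\sup_{m\ge 1}\Bigl\|\sum_{n=1}^m d_n\Bigr\|^p .
\end{equation}
This is the heart of the argument; the rest is a standard maximal-inequality unwinding to the definition of UMD.

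In the case $p>1$, Doob's $L^p$-maximal inequality (valid in any Banach space) gives
\[
\mathbb E\Bigl\|\sum_{n=1}^N \varepsilon_n d_n\Bigr\|^p \le \mathbb E \sup_{m\ge 1}\Bigl\|\sum_{n=1}^m \varepsilon_n d_n\Bigr\|^p \lesssim_{p,X} \mathbb E\sup_{m\ge 1}\Bigl\|\sum_{n=1}^m d_n\Bigr\|^p \lesssim_p \mathbb E\Bigl\|\sum_{n=1}^N d_n\Bigr\|^p,
\]
which is exactly the $L^p$-UMD transform inequality, so $X$ is UMD. For $p=1$, Doob's inequality is not available, but Markov's inequality applied to the left-hand side of \eqref{eq:max-transform-hypo} yields the weak-type $(1,1)$ maximal transform inequality
\[
\lambda \,\mathbb P\Bigl(\sup_{m\ge 1}\Bigl\|\sum_{n\le m}\varepsilon_n d_n\Bigr\|>\lambda\Bigr) \;\lesssim_X\; \mathbb E\sup_{m\ge 1}\Bigl\|\sum_{n\le m} d_n\Bigr\|,
\]
which via Burkholder's well-known weak-type characterization of UMD (see \cite{Burk81,Burk86}), applied after a Davis-type decomposition reducing the right-hand side to $\mathbb E\|\sum_n d_n\|$, implies that $X$ is UMD.

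The main obstacle is the $p=1$ endpoint: there one must either invoke the weak-type characterization of UMD or perform a truncation/stopping argument to trade the maximal function on the right-hand side for the terminal variable. For $p>1$ the conclusion is immediate once \eqref{eq:max-transform-hypo} is in hand, and the entire content of the proof is concentrated in the sign-invariance of the $\gamma$-norm.
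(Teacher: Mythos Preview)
Your core observation—that the $\gamma$-norm is invariant under sign changes, so the two-sided BDG hypothesis immediately yields the maximal martingale transform inequality
\[
\mathbb E \sup_{m\ge 1}\Bigl\|\sum_{n=1}^m \varepsilon_n d_n\Bigr\|^p \;\lesssim_{p,X}\; \mathbb E\sup_{m\ge 1}\Bigl\|\sum_{n=1}^m d_n\Bigr\|^p
\]
—is exactly the paper's argument, and for $p>1$ your proof via Doob is complete and matches the paper's (the paper uses the ideal property to get $\le$ for $[-1,1]$-valued scalars, you use Gaussian symmetry to get equality for $\{-1,1\}$-valued signs; either suffices).

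For $p=1$ there is a gap. Your weak-type inequality carries $\mathbb E\sup_m\bigl\|\sum_{n\le m} d_n\bigr\|$ on the right, not $\mathbb E\bigl\|\sum_n d_n\bigr\|$, whereas Burkholder's weak-type characterization of UMD requires the latter. The ``Davis-type decomposition reducing the right-hand side to $\mathbb E\|\sum_n d_n\|$'' you invoke does not exist in a general Banach space: such a reduction would amount to an $L^1$ Doob inequality $\mathbb E\sup_m\|M_m\|\lesssim\mathbb E\|M_N\|$, which is false. So the chain does not close as written.

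The paper's route at $p=1$ sidesteps this entirely: once the $L^1$ maximal-by-maximal transform inequality above is in hand, a good-$\lambda$ extrapolation (the paper cites \cite[Theorem~3.5.4]{HNVW1}) upgrades it to the same inequality in $L^q$ for every $q>1$, after which Doob applies exactly as in your $p>1$ case. No weak-type detour is needed.
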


\begin{proof}
Note that for any set $(x_n)_{n=1}^N$ of elements of $X$ and for any $[-1,1]$-valued sequence $(\eps_n)_{n=1}^N$ we have that $\|(\eps_n x_n)_{n=1}^N\|_{\gamma(\ell^2_N, X)} \leq \|(x_n)_{n=1}^N\|_{\gamma(\ell^2_N, X)}$ by the ideal property (see \cite[Theorem 9.1.10]{HNVW2}). Therefore if \eqref{eq:discBDGUMDBanach} holds for any $X$-valued martingale difference sequence $(d_n)_{n\geq 1}$, then we have that for any $[-1,1]$-valued sequence $(\eps_n)_{n\geq 1}$
\begin{equation}\label{eq:discBDGimpliesUMDhahaha}
  \mathbb E \sup_{m\geq 1}\Bigl\|\sum_{n=1}^m \eps_n d_n\Bigr\|^p \lesssim_{p, X}\mathbb E \sup_{m\geq 1}\Bigl\|\sum_{n=1}^m d_n\Bigr\|^p.
\end{equation}
If $p>1$, then \eqref{eq:discBDGimpliesUMDhahaha} together with \eqref{eq:DoobsineqXBanach} implies the UMD property. If $p=1$, then \eqref{eq:discBDGimpliesUMDhahaha} for $p=1$ implies \eqref{eq:discBDGimpliesUMDhahaha} for any $p>1$ (see \cite[Theorem 3.5.4]{HNVW1}), and hence it again implies UMD.
\end{proof}

Now we turn to the continuous-time case. It turns out that in this case the right-hand side of \eqref{eq:discBDGUMDBanach} transforms to a so-called {\em Gaussian characteristic} of a certain bilinear form generated by a quadratic variation of the corresponding martingale. Therefore before proving our main result (Theorem \ref{thm:BDGgeneralUMD}) we will need to outline some basic properties of a Gaussian characteristic (see Section \ref{sec:Gaussiancharacteristic}). We will also need some preliminaries concerning continuous-time Banach space-valued martingales (see Section \ref{sec:prelim}).

\section{Gaussian characteristics}\label{sec:Gaussiancharacteristic}
The current section is devoted to the definition and some basic properties of one of the main objects of the paper -- a Gaussian characteristic of a bilinear form. Many of the statements here might seem to be obvious for the reader. Nevertheless we need to show them before reaching our main Theorem \ref{thm:BDGgeneralUMD}.

\subsection{Basic definitions}\label{subsec:gausscharactbasicdefinitions}

Let us first recall some basic facts on Gaussian measures.
Let $X$ be a Banach space. An $X$-valued random variable $\xi$ is called {\em Gaussian} if $\langle \xi, x^*\rangle$ has a Gaussian distribution for all $x^*\in X^*$. Gaussian random variables enjoy a number of useful properties (see \cite{BogGaus,Kuo}). We will need the following {\em Gaussian covariance domination inequality} (see \cite[Corollary 3.3.7]{BogGaus} and \cite[Theorem 6.1.25]{HNVW2} for the case $\phi = \|\cdot\|^p$).

\begin{lemma}\label{lem:Gauscovinequality}
 Let $X$ be a Banach space, $\xi,\eta$ be centered $X$-valued Gaussian random variables. Assume that $\mathbb E \langle \eta, x^*\rangle^2 \leq \mathbb E \langle \xi, x^*\rangle^2 $ for all $x^* \in X^*$. Then $\mathbb E \phi(\eta) \leq \mathbb E \phi(\xi)$ for any convex symmetric continuous function $\phi:X \to\mathbb R_+$.
\end{lemma}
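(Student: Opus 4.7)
The plan is to reduce the bound to a coupling plus Jensen's inequality. First I would like to exhibit a centered $X$-valued Gaussian $\zeta$, defined on a possibly enlarged probability space, which is independent of $\eta$ and whose covariance operator is $Q_\xi - Q_\eta$, where $Q_\xi, Q_\eta : X^* \to X$ denote the covariance operators of $\xi$ and $\eta$. Since $\eta + \zeta$ is then a centered Gaussian with covariance $Q_\eta + (Q_\xi - Q_\eta) = Q_\xi$, this coupling forces $\eta + \zeta \stackrel{d}{=} \xi$.

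Granted such a $\zeta$, the inequality drops out quickly. Conditioning on $\eta$ and using that $\E[\zeta \mid \eta] = 0$ (by independence and centeredness of $\zeta$), Jensen's inequality gives
\[
 \phi(\eta) = \phi\!\bigl(\E[\eta + \zeta \mid \eta]\bigr) \leq \E\bigl[\phi(\eta + \zeta) \mid \eta\bigr],
\]
and taking expectations yields $\E \phi(\eta) \leq \E \phi(\eta+\zeta) = \E \phi(\xi)$. As an alternative, using that $-\zeta \stackrel{d}{=} \zeta$ we also have $\eta - \zeta \stackrel{d}{=} \xi$, and convexity and symmetry of $\phi$ give $\phi(\eta) \leq \tfrac{1}{2}\phi(\eta+\zeta) + \tfrac{1}{2}\phi(\eta - \zeta)$, which leads to the same conclusion.

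The hard part will be the construction of $\zeta$, i.e.\ verifying that the nonnegative symmetric bilinear form
\[
 V(x^*, y^*) := \E\langle \xi, x^*\rangle \langle \xi, y^*\rangle - \E\langle \eta, x^*\rangle \langle \eta, y^*\rangle
\]
is itself the covariance of some Gaussian Radon measure on $X$. I would first reduce to the case where $X$ is separable by restricting to the closed linear span of the supports of the laws of $\xi$ and $\eta$. Then I would invoke Cameron-Martin theory: the reproducing-kernel Hilbert space $H_V$ associated to $V$ embeds continuously into the Cameron-Martin space $H_\xi$ of $\xi$ (since $V \leq Q_\xi$ as quadratic forms on $X^*$), and any such subordinate covariance is the covariance of a Gaussian measure on $X$ (this is the content of the relevant results from Bogachev's monograph, in particular the material preceding Corollary~3.3.7). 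If one prefers to sidestep this infinite-dimensional construction, a finite-dimensional reduction is also available: for each finite set $x_1^*, \ldots, x_n^* \in X^*$ one applies the elementary $\R^n$-version of the coupling (where $\mathrm{Cov}(F\xi) - \mathrm{Cov}(F\eta)$ is automatically a PSD matrix, hence a genuine Gaussian covariance) and then passes to $\phi$ by a standard approximation of continuous convex functions on $X$ by cylindrical ones on the separable support.
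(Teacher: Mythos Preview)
Your argument is correct. Note, however, that the paper does not actually prove this lemma: it is stated with a bare citation to Bogachev's monograph (Corollary~3.3.7 there) and to Theorem~6.1.25 of Hyt\"onen--van Neerven--Veraar--Weis. The coupling you propose --- manufacture an independent centered Gaussian $\zeta$ with covariance $Q_\xi - Q_\eta$, so that $\eta+\zeta$ has the law of $\xi$, and then apply conditional Jensen --- is precisely the standard proof found in those references. You have also correctly isolated the only genuinely nontrivial point, namely that the nonnegative form $Q_\xi - Q_\eta$ is again the covariance of a Gaussian Radon measure on $X$; this is exactly what the Cameron--Martin domination argument (equivalently, the ideal/domination property of $\gamma$-radonifying operators, which the paper itself invokes later as Lemma~\ref{lem:VgeqWthenVgammageqWgamma}) delivers. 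Your finite-dimensional alternative is also viable, though the final approximation step for general convex $\phi$ would need a little more care than you indicate. As a small bonus, observe that your Jensen step uses only convexity of $\phi$, not its symmetry, so you have in fact established a marginally stronger statement than the one recorded in the lemma.
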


\smallskip

Let $X$ be a Banach space. We denote the linear space of all continuous $\mathbb R$-valued bilinear forms on $X\times X$ by $X^*\otimes X^*$. Note that this linear space can be endowed with the following natural norm:
\begin{equation}\label{eq:normofbilinearfromdefvoina}
  \|V\|:= \sup_{x\in X, \|x\|\leq 1} |V(x,x)|,
\end{equation}
where the latter expression is finite due to bilinearity and continuity of $V$. A bilinear form $V$ is called {\em nonnegative} if $V(x, x)\geq 0$ for all $x\in X$, and $V$ is called {\em symmetric} if $V(x, y) = V(y, x)$ for all $x, y\in X$.

\smallskip

Let $X$ be a Banach space, $\xi$ be a centered $X$-valued Gaussian random variable. Then $\xi$ has a {\em covariance bilinear form} $V:X^*\times X^*\to \mathbb R$ such that
\[
 V(x^*, y^*) = \mathbb E \langle \xi, x^*\rangle\langle \xi, y^*\rangle,\;\;\; x^*, y^*\in X.
\]
Notice that a covariance bilinear form is always continuous, symmetric, and nonnegative. It is worth noticing that one usually considers a {\em covariance operator} $Q:X^* \to X^{**}$ defined by
\[
 \langle Q x^*, y^*\rangle = \mathbb E \langle \xi, x^*\rangle\langle \xi, y^*\rangle,\;\;\; x^*, y^*\in X.
\]
But since there exists a simple one-to-one correspondence between bilinear forms and $\mathcal L(X^*, X^{**})$, we will work with covariance bilinear forms instead.
We refer the reader to \cite{BogGaus,DPZ,GvN,vN98} for details.

\smallskip

Let $V:X^*\times X^*\to \mathbb R$ be a symmetric continuous nonnegative bilinear form. Then $V$ is said to have a finite {\em Gaussian characteristic} $\gamma(V)$ if there exists a centered $X$-valued Gaussian random variable $\xi$ such that $V$ is the covariance bilinear form of $\xi$.
Then we set $\gamma(V) := (\mathbb E \|\xi\|^2)^{\frac 12}$ (this value is finite due to the Fernique theorem, see \cite[Theorem 2.8.5]{BogGaus}). Otherwise we set $\gamma(V) = \infty$. Note that then for all $x^*, y^*\in X^*$ one has the following control of continuity of $V$:
\begin{equation}\label{eq:contofVduetogammanormtikinula}
 \begin{multlined}
  |V(x^*, x^*)^{\frac{1}{2}} - V(y^*, y^*)^{\frac{1}{2}}| = (\mathbb E |\langle \xi, x^*\rangle|^2)^{\frac 12} - (\mathbb E |\langle \xi, y^*\rangle|^2)^{\frac 12}\\
 \leq (\mathbb E |\langle \xi, x^* - y^*\rangle|^2)^{\frac 12} \leq  (\mathbb E \|\xi\|^2)^{\frac 12}\|x^*-y^*\| = \|x^*-y^*\|\gamma(V).
 \end{multlined}
\end{equation}

\begin{remark}
 Note that for any $V$ with $\gamma(V)<\infty$ the distribution of the corresponding centered $X$-valued Gaussian random variable $\xi$ is {\em uniquely determined} (see \cite[Chapter 2]{BogGaus}).
\end{remark}

\begin{remark}\label{rem:fdinmpliesgamma(V)<infty}
 Note that if $X$ is finite dimensional, then $\gamma(V)<\infty$ for any nonnegative symmetric bilinear form $V$. Indeed, in this case $X$ is isomorphic to a finite dimensional Hilbert space $H$, so there exists an eigenbasis $(h_n)_{n=1}^d$ making $V$ diagonal, and then the corresponding Gaussian random variable will be equal to $\xi := \sum_{n=1}^d V(h_n, h_n) \gamma_n h_n$, where $(\gamma_n)_{n=1}^d$ are independent standard Gaussian.
\end{remark}

\subsection{Basic properties of $\gamma(\cdot)$}

Later we will need the following technical lemmas.

\begin{lemma}\label{lem:TfromVizveniiiilsya}
 Let $X$ be a reflexive (separable) Banach space, $V:X^* \times X^* \to \mathbb R$ be a symmetric continuous nonnegative bilinear form. Then there exist a (separable) Hilbert space $H$ and $T\in \mathcal L(H,X)$ such that 
 $$
 V(x^*, y^*) = \langle T^* x^*, T^*y^*\rangle,\;\;\; x^*,y^*\in X^*.
 $$
\end{lemma}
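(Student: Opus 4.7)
The plan is to realize $V$ as the Gramian of the adjoint of an operator from a Hilbert space into $X$, built via the semi-inner product induced by $V$ on $X^*$, and then to use the reflexivity of $X$ to identify the range of that adjoint back with $X$.

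First I would observe that, since $V$ is continuous, symmetric, and nonnegative, the formula $\langle x^*, y^*\rangle_V := V(x^*, y^*)$ defines a positive semidefinite symmetric bilinear form on $X^*$. The Cauchy--Schwarz inequality for semi-inner products gives $|V(x^*, y^*)|^2 \leq V(x^*, x^*) V(y^*, y^*)$, which shows that $N := \{x^* \in X^* : V(x^*, x^*) = 0\}$ is a closed linear subspace, and that $V$ descends to a genuine inner product on $X^*/N$. Let $H$ be the Hilbert space completion of $X^*/N$ with respect to this inner product, and let $J : X^* \to H$ be the composition of the quotient map $X^* \to X^*/N$ with the canonical inclusion $X^*/N \hookrightarrow H$. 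By \eqref{eq:normofbilinearfromdefvoina} one has $\|Jx^*\|_H^2 = V(x^*, x^*) \leq \|V\|\, \|x^*\|_{X^*}^2$, so $J \in \mathcal L(X^*, H)$, and by construction $\langle Jx^*, Jy^*\rangle_H = V(x^*, y^*)$ for all $x^*, y^* \in X^*$.

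Next, identifying $H$ with $H^*$ via Riesz, I would set $T := J^* : H \to X^{**}$. By the reflexivity of $X$ this operator takes values in $X$, so $T \in \mathcal L(H, X)$. Taking adjoints once more and using the canonical identifications $X^{**} = X$ and $H^{**} = H$, one obtains $T^* = J^{**} = J$, and hence
\[
V(x^*, y^*) = \langle Jx^*, Jy^*\rangle_H = \langle T^*x^*, T^*y^*\rangle_H, \qquad x^*, y^* \in X^*,
\]
which is the asserted identity. For the separable case, if in addition $X$ is separable and reflexive, then $X^{**} = X$ is separable; the standard fact that separability of the dual of a Banach space implies separability of the space (applied to $Y = X^*$, whose dual is $X^{**}$) yields that $X^*$ is separable. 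Consequently $X^*/N$ and hence its completion $H$ are separable as well.

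I do not foresee any substantial obstacle: the construction is the standard GNS-type quotient-and-completion, and reflexivity of $X$ enters exactly once, in the crisp step of identifying the range of $T = J^*$ with $X$ rather than $X^{**}$. The only mild points requiring care are that $V$ truly descends through the null space $N$ (immediate from Cauchy--Schwarz) and that the bound $\|J\|_{\mathcal L(X^*,H)} \leq \|V\|^{1/2}$ is what legitimizes forming the Banach space adjoint $T = J^*$ in the first place.
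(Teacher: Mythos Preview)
Your proof is correct. The paper does not supply its own argument but refers to \cite[pp.\ 57--58]{BN} and \cite[p.\ 154]{Kuo}; the GNS-type quotient-and-completion construction you carry out is exactly the standard proof found in those sources, so the approaches coincide.
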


\begin{proof}
See \cite[pp.\ 57-58]{BN} or \cite[p.\ 154]{Kuo}.
\end{proof}

The following lemma connects Gaussian characteristics and $\gamma$-norms (see \eqref{eq:defofgammanormsnove}) and it can be found e.g.\ in \cite[Theorem 7.4]{Ngamma} or in \cite{BN,NW1}.

\begin{lemma}\label{lem:gammaofV=gammaofTfits}
 Let $X$ be a separable Banach space, $H$ be a separable Hilbert space, $T\in \mathcal L(H, X)$, $V:X^*\times X^* \to \mathbb R$ be  a symmetric continuous nonnegative bilinear form such that $V(x^*, y^*) = \langle T^* x^*, T^*y^*\rangle$ for all $x^*,y^*\in X^*$. Then $\gamma(V) =\|T\|_{\gamma(H,X)}$.
\end{lemma}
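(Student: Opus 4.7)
The natural candidate is to show that both quantities are the $L^2$-norm of one and the same $X$-valued Gaussian random variable. Concretely, with $(h_n)_{n\ge 1}$ an orthonormal basis of $H$ and $(\gamma_n)_{n\ge 1}$ independent standard Gaussians, define the partial sums
\[
\xi_N := \sum_{n=1}^N \gamma_n\, T h_n.
\]
Each $\xi_N$ is an $X$-valued centered Gaussian random variable, and a direct computation using orthonormality of $(h_n)$ gives
\[
\mathbb{E}\,\langle \xi_N, x^*\rangle\langle \xi_N, y^*\rangle = \sum_{n=1}^N \langle h_n, T^*x^*\rangle\langle h_n, T^*y^*\rangle,
\]
which increases to $\langle T^*x^*, T^*y^*\rangle = V(x^*,y^*)$ as $N\to\infty$.

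The plan is then to prove both inequalities $\gamma(V)\le \|T\|_{\gamma(H,X)}$ and $\|T\|_{\gamma(H,X)}\le \gamma(V)$. For the first, assume $\|T\|_{\gamma(H,X)}<\infty$. Then $(\xi_N)$ is bounded in $L^2(\Omega;X)$, and by the It\^o--Nisio theorem for Gaussian series the partial sums converge almost surely and in $L^2$ to some limit $\xi$; being an $L^2$-limit of centered Gaussian random variables, $\xi$ is itself centered Gaussian, and passing to the limit in the covariance identity above shows that $V$ is its covariance bilinear form. By definition of $\gamma(V)$ and the uniqueness (in distribution) of the centered Gaussian with covariance $V$, we get $\gamma(V) = (\mathbb{E}\|\xi\|^2)^{1/2} = \|T\|_{\gamma(H,X)}$, giving one direction.

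For the reverse inequality, assume $\gamma(V)<\infty$ and let $\xi$ be the centered Gaussian with covariance $V$. Since the covariance of $\xi_N$ is dominated by that of $\xi$ in the sense $\mathbb{E}\langle \xi_N,x^*\rangle^2 \le \mathbb{E}\langle \xi,x^*\rangle^2$ for every $x^*\in X^*$, the Gaussian covariance domination inequality (Lemma~\ref{lem:Gauscovinequality}) applied to the convex symmetric continuous function $\phi = \|\cdot\|^2$ yields $\mathbb{E}\|\xi_N\|^2 \le \mathbb{E}\|\xi\|^2 = \gamma(V)^2$ for all $N$. Taking $N\to\infty$ and using the definition $\|T\|_{\gamma(H,X)}^2 = \sup_N \mathbb{E}\|\xi_N\|^2$ (with the convention that this equals $+\infty$ if the series diverges) gives $\|T\|_{\gamma(H,X)}\le \gamma(V)$, which combined with the previous step yields the equality.

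The main subtle point is the identification of the $L^2$-limit $\xi$ of $\xi_N$ (when it exists) as \emph{the} Gaussian associated with $V$: this relies on uniqueness of Gaussian laws on $X$ given their covariance, which follows from the basics recalled in Section~\ref{subsec:gausscharactbasicdefinitions}. The It\^o--Nisio step giving convergence of the Gaussian series from $L^2$-boundedness is standard and the application of Lemma~\ref{lem:Gauscovinequality} is immediate, so there is no real obstacle beyond keeping track of the convention $\|T\|_{\gamma(H,X)}=+\infty$ in the divergent case and noting that the resulting chain of inequalities forces either both quantities to be finite and equal or both to be infinite.
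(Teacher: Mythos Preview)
Your argument is correct and, in fact, supplies more than the paper does: the paper gives no proof of this lemma at all, merely citing \cite[Theorem~7.4]{Ngamma} and \cite{BN,NW1}. What you have written is essentially the standard argument found in those references --- build the Gaussian series $\xi_N=\sum_{n\le N}\gamma_n Th_n$, identify its limiting covariance as $V$, and use covariance domination (Lemma~\ref{lem:Gauscovinequality}) for the reverse inequality.

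Two small remarks. First, your claim that $\|T\|_{\gamma(H,X)}^2=\sup_N\mathbb E\|\xi_N\|^2$ is not literally the paper's definition \eqref{eq:defofgammanormsnove}, but it follows because $(\xi_N)_N$ is a Banach-space-valued martingale, so $\mathbb E\|\xi_N\|^2$ is nondecreasing; combined with your It\^o--Nisio step this justifies the identification in both the convergent and divergent cases. Second, note that once you have shown in the first part that $\|T\|_{\gamma(H,X)}<\infty$ implies $\gamma(V)=\|T\|_{\gamma(H,X)}$, the second part only needs to produce $\|T\|_{\gamma(H,X)}<\infty$ from $\gamma(V)<\infty$, which your domination argument does; you have organised this correctly.
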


\begin{remark}
 Fix a Hilbert space $H$ and a Banach space $X$. Note that even though by the lemma above there exists a natural embedding of $\gamma$-radonifying operators from $\mathcal L(H, X)$ to the space of symmetric nonnegative bilinear forms on $X^*\times X^*$, this embedding is neither {\em injective} nor {\em linear}. This also explains why we need to use bilinear forms with finite Gaussian characteristics instead of $\gamma$-radonifying operators: in the proof of our main result -- Theorem \ref{thm:BDGgeneralUMD} -- we will need various statements (like triangular inequalities and convergence theorems) for {\em bilinear forms}, not {\em operators}.
\end{remark}

Now we will prove some statements about approximation of nonnegative symmetric bilinear forms by finite dimensional ones in $\gamma(\cdot)$.

\begin{lemma}\label{lem:V_0isonsubsetofVsoitsgammaislettznakomo}
 Let $X$ be a reflexive Banach space, $Y \subset X^*$ be a finite dimensional subspace. Let $P:Y \hookrightarrow X^*$ be an inclusion operator. Let $V:X^* \times X^* \to \mathbb R$ and $V_0:Y\times Y \to \mathbb R$ be symmetric continuous nonnegative bilinear forms such that $V_0(x_0^*, y_0^*) = V(Px_0^*, Py_0^*)$ for all $x_0^*, y_0^*\in Y$. Then $\gamma(V_0)$ is well-defined and $\gamma(V_0) \leq \gamma(V)$.
\end{lemma}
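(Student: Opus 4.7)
The plan is to realize $V_0$ as the covariance of a pushforward of the $X$-valued Gaussian associated with $V$, and then bound its $L^2$-norm by operator norm considerations. First, well-definedness of $\gamma(V_0)$ is essentially automatic: since $Y$ is finite dimensional, so is $Y^*$, and therefore every nonnegative symmetric bilinear form on $Y\times Y$ (viewed as a form on $Y^{**}\times Y^{**}$ via $Y^{**}=Y$) admits a Gaussian realization in $Y^*$ by Remark \ref{rem:fdinmpliesgamma(V)<infty}.

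For the inequality, I would first dispose of the trivial case $\gamma(V)=\infty$ and then assume $\gamma(V)<\infty$, so that there exists a centered $X$-valued Gaussian $\xi$ with covariance bilinear form $V$. Using reflexivity of $X$ to identify $X^{**}=X$, the adjoint $P^*$ of the inclusion $P:Y\hookrightarrow X^*$ becomes a bounded operator $P^*:X\to Y^*$ with $\|P^*\|\leq\|P\|=1$. I would then set $\eta:=P^*\xi$, which is a centered Gaussian random variable in the finite dimensional space $Y^*$, and compute its covariance:
$$
\mathbb E\langle \eta,y\rangle\langle \eta,y'\rangle=\mathbb E\langle \xi,Py\rangle\langle \xi,Py'\rangle=V(Py,Py')=V_0(y,y'),\quad y,y'\in Y.
$$
By the uniqueness (in distribution) of the Gaussian measure with a given covariance, this $\eta$ is precisely the Gaussian whose $L^2$-norm defines $\gamma(V_0)$, so
$$
\gamma(V_0)^2=\mathbb E\|\eta\|_{Y^*}^2=\mathbb E\|P^*\xi\|_{Y^*}^2\leq\|P^*\|^2\,\mathbb E\|\xi\|_X^2\leq\gamma(V)^2.
$$

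I do not expect any serious obstacle here; the only point that requires a little care is the use of reflexivity, which is what makes $P^*$ land in $Y^*$ (rather than in some quotient of $X^{**}$) so that the pushforward of $\xi$ is an honest $Y^*$-valued random variable. An alternative route, if one prefers to avoid referring explicitly to the Gaussian random variable $\xi$, would be to factor $V$ through a Hilbert space via Lemma \ref{lem:TfromVizveniiiilsya}, write $V_0(y,y')=\langle T^*Py,T^*Py'\rangle_H$, and invoke Lemma \ref{lem:gammaofV=gammaofTfits} together with the ideal property of $\gamma$-norms applied to the composition $PT$; but the direct pushforward argument above is shorter and relies only on the definitions.
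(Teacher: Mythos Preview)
Your proof is correct and follows essentially the same approach as the paper: both arguments push forward the $X$-valued Gaussian $\xi$ with covariance $V$ through $P^*:X\to Y^*$ (using reflexivity so that $P^*$ lands in $Y^*$), verify that the resulting $Y^*$-valued Gaussian has covariance $V_0$, and conclude via $\|P^*\|=\|P\|=1$. The alternative route you sketch via Lemmas~\ref{lem:TfromVizveniiiilsya} and~\ref{lem:gammaofV=gammaofTfits} is also valid but is not the one the paper uses.
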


\begin{proof}
 First of all notice that $\gamma(V_0)$ is well-defined since $Y$ is finite dimensional, hence reflexive, and thus has a predual space coinciding with its dual. Without loss of generality assume that $\|V\|_{\gamma}<\infty$. Let $\xi_V$ be a centered $X$-valued Gaussian random variable with $V$ as the covariance bilinear form. Define $\xi_{V_0}:= P^*\xi_V$ (note that $Y^*\hookrightarrow X$ due to the Hahn-Banach theorem). Then for all $x_0^*, y_0^* \in X_0^*$
 \[
  \mathbb E \langle \xi_{V_0}, x_0^*\rangle\langle\xi_{V_0}, y_0^*\rangle= \mathbb E \langle \xi_{V}, Px_0^*\rangle\langle\xi_{V}, Py_0^*\rangle = V(Px_0^*, Py_0^*) = V_0(x_0^*, y_0^*),
 \]
so $V_0$ is the covariance bilinear form of $\xi_{V_0}$ and since $\|P^*\| = \|P\|=1$
\begin{equation}\label{eq:inlemV_0isonsubsetofVsoitsgammaisllastguy}
 \gamma(V_0) = (\mathbb E \|\xi_{V_0}\|^2)^{\frac 12}  = (\mathbb E \|P^*\xi_{V}\|^2)^{\frac 12} \leq (\mathbb E \|\xi_{V}\|^2)^{\frac 12} = \gamma(V).
\end{equation}
\end{proof}

\begin{proposition}\label{prop:convergenceofrestrictedV_m}
 Let $X$ be a separable reflexive Banach space, $V:X^* \times X^* \to \mathbb R$ be a symmetric continuous nonnegative bilinear form. Let $Y_1\subset Y_2\subset \ldots \subset Y_m\subset \ldots$ be a sequence of finite dimensional subspaces of $X^*$ with $\overline{\cup_m Y_m}=X^*$. Then for each $m\geq 1$ a symmetric continuous nonnegative bilinear form $V_m = V|_{Y_m\times Y_m}$ is well-defined and $\gamma(V_m) \to \gamma(V)$ as $m\to \infty$.
\end{proposition}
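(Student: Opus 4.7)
First, I would establish monotonicity. By Lemma \ref{lem:V_0isonsubsetofVsoitsgammaislettznakomo} applied once with ambient space $Y_{m+1}$ (for the inclusion $Y_m\subset Y_{m+1}$) and once with ambient space $X^*$ (for $Y_m\subset X^*$), the sequence $(\gamma(V_m))_{m\ge 1}$ is nondecreasing and bounded above by $\gamma(V)$. Set $L:=\lim_m \gamma(V_m)\in[0,\gamma(V)]$; it remains to prove the reverse bound $\gamma(V)\le L$.

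Next, by Lemma \ref{lem:TfromVizveniiiilsya} I would write $V(x^*,y^*)=\langle T^*x^*,T^*y^*\rangle_H$ for some separable Hilbert space $H$ and $T\in\mathcal L(H,X)$. Let $P_m:Y_m\hookrightarrow X^*$ denote the inclusion, with adjoint $P_m^*:X=X^{**}\to Y_m^*$ of norm one (by reflexivity of $X$). The operator $T_m:=P_m^*T\in\mathcal L(H,Y_m^*)$ represents $V_m$ in the sense of Lemma \ref{lem:gammaofV=gammaofTfits}, so $\|T_m\|_{\gamma(H,Y_m^*)}=\gamma(V_m)\le L$.

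The heart of the proof is the pointwise monotone identity
\[
\|P_m^* x\|_{Y_m^*}=\sup\bigl\{|y^*(x)|:\ y^*\in Y_m,\ \|y^*\|\le 1\bigr\}\ \nearrow\ \|x\|_X\quad(m\to\infty),\qquad x\in X,
\]
which is immediate from the density of $\bigcup_m Y_m$ in $X^*$ together with the Hahn-Banach description of $\|x\|_X$ (rescaling near-unit-norm approximants to true unit-norm ones). Fixing an orthonormal basis $(h_n)_{n\ge 1}$ of $H$ and independent standard Gaussians $(\gamma_n)_{n\ge 1}$, and setting $S_N:=\sum_{n=1}^N \gamma_n Th_n\in L^2(\Omega;X)$, I would apply this identity pointwise in $\omega$ and invoke monotone convergence to obtain
\[
\mathbb E\|S_N\|_X^2\;=\;\lim_{m\to\infty}\mathbb E\Bigl\|\sum_{n=1}^N \gamma_n T_m h_n\Bigr\|_{Y_m^*}^2\;\le\;\sup_m \gamma(V_m)^2\;\le\;L^2,
\]
where the middle inequality uses that the finite partial sum $\sum_{n=1}^N \gamma_n T_m h_n$ is the conditional expectation on $\sigma(\gamma_1,\ldots,\gamma_N)$ of the $L^2$-convergent Gaussian series $\sum_n \gamma_n T_m h_n$ (whose squared $L^2$-norm is $\gamma(V_m)^2$), combined with Jensen's inequality.

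Finally, since $\sup_N \mathbb E\|S_N\|^2\le L^2$, the It\^o-Nisio theorem for Gaussian series in a Banach space yields an a.s.\ and $L^2(\Omega;X)$ limit $\xi:=\sum_n\gamma_n Th_n$, and a direct covariance computation identifies $V$ as the covariance bilinear form of $\xi$. Hence $\gamma(V)^2=\mathbb E\|\xi\|^2\le L^2$, closing the loop. Note that this argument covers the case $\gamma(V)=\infty$ uniformly: otherwise the above chain would force $T$ to be $\gamma$-radonifying and contradict $\gamma(V)=\infty$, so $L=\infty$ is forced. The main obstacle I anticipate is the monotone approximation of the $X$-norm by duality with $Y_m$ and its transfer from pointwise to $L^2$ via the conditional-expectation interpretation of the partial sums; the remaining steps reduce to standard Gaussian-series/It\^o-Nisio manipulations.
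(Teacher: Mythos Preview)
Your proof is correct and follows essentially the same route as the paper: represent $V$ via $T\in\mathcal L(H,X)$ (Lemma~\ref{lem:TfromVizveniiiilsya}), identify $V_m$ with $T_m=P_m^*T$ (Lemma~\ref{lem:gammaofV=gammaofTfits}), and exploit the monotone pointwise convergence $\|P_m^*x\|\nearrow\|x\|$. The only difference is organizational: the paper passes directly to the full Gaussian series and applies monotone convergence in $m$, whereas you first fix finite partial sums $S_N$, bound them uniformly by $L$ via the conditional-expectation contraction, and then invoke It\^o--Nisio to build $\xi$; your version has the minor advantage of treating the case $\gamma(V)=\infty$ more explicitly, but the two arguments are otherwise the same.
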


\begin{proof}
 First of all notice that $V_m$'s are well-defined since each of the  $Y_m$ is finite dimensional, hence reflexive, and thus has a predual space coinciding with its dual (which we will call $X_m$ and which can even be embedded into $X$ due to the Hahn-Banach theorem). Let $P_m:Y_m \hookrightarrow X^*$ be the inclusion operator (thus is particular $\|P_m\|\leq 1$). Let a Hilbert space $H$ and an operator $T\in \mathcal L(H,X)$ be as constructed in Lemma \ref{lem:TfromVizveniiiilsya}. Let $(h_n)_{n\geq 1}$ be an orthonormal basis of $H$, and $(\gamma_n)_{n\geq 1}$ be a sequence of standard Gaussian random variables. For each $N\geq 1$ define a centered Gaussian random variable $\xi_N :=\sum_{n=1}^{N} \gamma_n Th_n$. Then for each $m\geq 1$ the centered Gaussian random variable $\sum_{n=1}^{\infty} \gamma_n P_m^*Th_n$ is well-defined (since $P_m^*T$ has a finite rank, and every finite rank operator has a finite $\gamma$-norm, see \cite[Section 9.2]{HNVW2}), and for any $x^*\in Y_m$ we have that 
 \begin{align*}
   V_m(x^*, x^*) = V(x^*, x^*) = \| T^* x^*\|= \|T^* P_m x^*\|=\mathbb E \Bigl\langle \sum_{n=1}^{\infty} \gamma_n P_m^* Th_n, x^*\Bigr\rangle^2,
 \end{align*}
so $V_m$ is the covariance bilinear form of $\sum_{n=1}^{\infty} \gamma_n P_m^* Th_n$, and
\[
 \gamma(V_m) = \Bigl(\mathbb E \Bigl\|\sum_{n=1}^{\infty} \gamma_n  P_m^*Th_n\Bigr\|^2\Bigr)^{\frac 12} = \Bigl(\mathbb E \Bigl\|P_m^*\sum_{n=1}^{\infty} \gamma_n  Th_n\Bigr\|^2\Bigr)^{\frac 12}.
\]
The latter expression converges to $\gamma(V)$ by Lemma \ref{lem:gammaofV=gammaofTfits} and due to the fact that $\|P^*_m x\|\to \|x\|$ monotonically for each $x\in X$ as $m\to \infty$.
\end{proof}

The next lemma provides the Gaussian characteristic with the triangular inequality.

\begin{lemma}\label{lem:||V+W||gammaleq||V||gamma+||W||gamma}
 Let $X$ be a reflexive Banach space, $V, W:X^*\times X^*$ be symmetric continuous nonnegative bilinear forms. Then $\gamma(V+W) \leq \gamma(V) +\gamma(W)$.
\end{lemma}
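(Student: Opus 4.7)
The plan is to realize $V$ and $W$ as covariance bilinear forms of independent Gaussian random variables, sum them, and then invoke Minkowski's inequality in $L^2(\Omega;X)$.

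First I would dispose of the trivial case: if $\gamma(V) = \infty$ or $\gamma(W) = \infty$, the inequality is automatic, so assume both are finite. By the very definition of the Gaussian characteristic, we can find centered $X$-valued Gaussian random variables $\xi_V$ and $\xi_W$ whose covariance bilinear forms are $V$ and $W$ respectively, with $\gamma(V) = (\mathbb E \|\xi_V\|^2)^{1/2}$ and $\gamma(W) = (\mathbb E \|\xi_W\|^2)^{1/2}$. By enlarging the probability space if necessary (e.g.\ working on a product space), we may assume $\xi_V$ and $\xi_W$ are defined on the same probability space and are \emph{independent}.

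Next I would set $\xi := \xi_V + \xi_W$. Since independent Gaussians sum to a Gaussian, $\xi$ is again a centered $X$-valued Gaussian random variable. For any $x^*, y^* \in X^*$, using independence to kill the cross terms,
\begin{align*}
 \mathbb E \langle \xi, x^*\rangle \langle \xi, y^*\rangle
 &= \mathbb E \langle \xi_V, x^*\rangle \langle \xi_V, y^*\rangle + \mathbb E \langle \xi_W, x^*\rangle \langle \xi_W, y^*\rangle\\
 &= V(x^*, y^*) + W(x^*, y^*),
\end{align*}
so $V+W$ is the covariance bilinear form of $\xi$. In particular $\gamma(V+W)$ is finite and equals $(\mathbb E \|\xi\|^2)^{1/2}$.

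Finally, the triangle inequality in $L^2(\Omega;X)$ gives
\[
 \gamma(V+W) = \bigl(\mathbb E \|\xi_V + \xi_W\|^2\bigr)^{1/2} \leq \bigl(\mathbb E \|\xi_V\|^2\bigr)^{1/2} + \bigl(\mathbb E \|\xi_W\|^2\bigr)^{1/2} = \gamma(V) + \gamma(W),
\]
which is the desired inequality. The argument is essentially soft; the only mild subtlety is ensuring independence of the two Gaussian representers, but this is harmless since the value $\gamma(V)$ depends only on the distribution of $\xi_V$ (which is uniquely determined by $V$, as noted in the remark in Subsection \ref{subsec:gausscharactbasicdefinitions}) and likewise for $\xi_W$.
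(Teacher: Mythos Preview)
Your proof is correct and follows essentially the same route as the paper: dispose of the infinite case, take independent Gaussian representers $\xi_V$ and $\xi_W$, observe that their sum is a centered Gaussian with covariance $V+W$, and apply the triangle inequality in $L^2(\Omega;X)$. The paper checks the covariance identity on the diagonal $x^*=y^*$ rather than via the full bilinear cross-term computation, but this is an immaterial difference.
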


\begin{proof}
 If $\max\{\gamma(V) , \gamma(W)\} = \infty$ then the lemma is obvious. Let $\gamma(V) , \gamma(W) < \infty$. Let $\xi_V$ and $\xi_W$ be $X$-valued centered Gaussian random variables corresponding to $V$ and $W$ respectively. Without loss of generality we can set $\xi_V$ and $\xi_W$ independent. Let $\xi_{V+W} = \xi_V + \xi_W$. Then $\xi_{V+W}$ is an  $X$-valued centered Gaussian random variable (see \cite{BogGaus}) and for any $x^*\in X^*$ due to the independence of $\xi_V$ and $\xi_W$
 \[
  \mathbb E \langle \xi_{V+W}, x^*\rangle^2 = \mathbb E \langle \xi_{V} + \xi_W, x^*\rangle^2 = \mathbb E \langle \xi_{V}, x^*\rangle^2 + \mathbb E \langle \xi_W, x^*\rangle^2 = (V+W)(x^*, x^*).
 \]
So $\xi_{V+W}$ has $V+W$ as the covariation bilinear form, and therefore
\[
 \gamma(V+W) = (\mathbb E \|\xi_{V+W}\|^2)^{\frac 12} \leq (\mathbb E \|\xi_{V}\|^2)^{\frac 12} + (\mathbb E \|\xi_{W}\|^2)^{\frac 12} = \gamma(V) + \gamma(W). 
\]
\end{proof}

Now we discuss such important properties of $\gamma(\cdot)$ as monotonicity and monotone continuity.

\begin{lemma}\label{lem:VgeqWthenVgammageqWgamma}
 Let $X$ be a separable Banach space, $V, W:X^*\times X^* \to \mathbb R$ be symmetric continuous nonnegative bilinear forms such that
 $W(x^*, x^*) \leq V(x^*, x^*)$ for all $x^* \in X^*$. Then $\gamma(W) \leq \gamma(V)$.
\end{lemma}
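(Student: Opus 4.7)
The plan is to reduce to the nontrivial case $\gamma(V) < \infty$ and to construct a $\gamma$-radonifying operator $T_W \in \mathcal L(H, X)$ that represents $W$ and satisfies $\|T_W\|_{\gamma(H, X)} \leq \gamma(V)$; once this is accomplished, the conclusion $\gamma(W) \leq \gamma(V)$ will follow immediately from Lemma \ref{lem:gammaofV=gammaofTfits}.

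First I would use that $\gamma(V) < \infty$ to obtain, via the standard reproducing-kernel (Cameron--Martin) construction for the Gaussian measure associated to $V$, a separable Hilbert space $H$ and an operator $T \in \mathcal L(H, X)$ with $V(x^*, y^*) = \langle T^* x^*, T^* y^*\rangle_H$ for all $x^*, y^* \in X^*$ and $\|T\|_{\gamma(H, X)} = \gamma(V)$; concretely this can be realized by an It\^o--Nisio expansion $\xi_V = \sum_n \gamma_n x_n$ and setting $T e_n := x_n$. Next, on the subspace $T^*(X^*) \subseteq H$ I would introduce a bilinear form by $\widetilde W(T^* x^*, T^* y^*) := W(x^*, y^*)$. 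The domination $W \leq V$ together with the Cauchy--Schwarz inequality for the nonnegative form $W$ makes this well defined: if $T^* x^* = 0$, then $V(x^*, x^*) = 0$, hence $W(x^*, x^*) = 0$, and Cauchy--Schwarz then forces $W(x^*, \cdot) \equiv 0$. The same domination yields $0 \leq \widetilde W(h, h) \leq \|h\|_H^2$ on $T^*(X^*)$, so $\widetilde W$ extends by continuity to a bounded nonnegative symmetric bilinear form on all of $H$, and the Riesz representation theorem produces a self-adjoint operator $B$ on $H$ with $0 \leq B \leq I$ and $\widetilde W(h, h') = \langle B h, h'\rangle_H$.

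Setting $T_W := T B^{1/2}$, a direct computation gives $\langle T_W^* x^*, T_W^* y^*\rangle_H = \langle B T^* x^*, T^* y^*\rangle_H = W(x^*, y^*)$, so $T_W$ represents $W$. The ideal property of the $\gamma$-norm (\cite[Theorem 9.1.10]{HNVW2}) then yields $\|T_W\|_{\gamma(H, X)} \leq \|B^{1/2}\| \cdot \|T\|_{\gamma(H, X)} \leq \|T\|_{\gamma(H, X)} = \gamma(V)$, and Lemma \ref{lem:gammaofV=gammaofTfits} finishes the argument. The main technical obstacle I anticipate is the well-definedness and bounded extension of $\widetilde W$ to $H$, that is, the clean extraction of the operator $B$ from the bilinear domination $W \leq V$; the remaining steps are routine manipulations of $\gamma$-norms. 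An alternative finish, once $T_W$ is constructed, is to realize the Gaussian series $\xi_W := \sum_n \gamma_n T_W h_n$ as an $X$-valued centered Gaussian random variable with covariance $W$ and then apply the covariance domination Lemma \ref{lem:Gauscovinequality} with $\phi(x) = \|x\|^2$, which bypasses the explicit appeal to the ideal property.
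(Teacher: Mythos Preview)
Your argument is correct. The paper's own proof is a one-line citation: it invokes Lemma~\ref{lem:gammaofV=gammaofTfits} together with \cite[Theorem~9.4.1]{HNVW2}, the $\gamma$-domination theorem (if $\|S^*x^*\|\le\|T^*x^*\|$ for all $x^*\in X^*$ then $\|S\|_{\gamma(H,X)}\le\|T\|_{\gamma(H,X)}$). What you have written is essentially an unpacking of that black box: your construction of the positive contraction $B$ on $\overline{T^*(X^*)}$ and the factorisation $T_W=TB^{1/2}$ is exactly how \cite[Theorem~9.4.1]{HNVW2} is proved, and your final appeal to the ideal property is the same mechanism. Your handling of the case $\gamma(V)<\infty$ via the It\^o--Nisio expansion of $\xi_V$ is also the right way to produce $T$ without invoking Lemma~\ref{lem:TfromVizveniiiilsya} (which assumes reflexivity, not present in the hypotheses here). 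So the two proofs coincide in substance; yours is self-contained, the paper's defers the work to the reference. The alternative finish you mention via Lemma~\ref{lem:Gauscovinequality} is also valid and is in fact slightly closer in spirit to a direct Gaussian-measure argument, though once you have $T_W$ either route is immediate.
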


\begin{proof}
 The lemma follows from Lemma \ref{lem:gammaofV=gammaofTfits} and \cite[Theorem 9.4.1]{HNVW2}.
\end{proof}

\begin{lemma}\label{lem:V_nmonto0thenthesamefor0}
  Let $X$ be a separable reflexive Banach space, $Y\subset X^*$ be a dense subset, $(V_n)_{n\geq 1}$ be symmetric continuous nonnegative bilinear forms on $X^* \times X^*$ such that $V_n(x^*, x^*) \to 0$ for any $x^*\in Y$ monotonically as $n\to \infty$. Assume additionally that $\gamma(V_n) <\infty$ for some $n\geq 1$. Then $\gamma(V_n)\to 0$ monotonically as $n\to \infty$.
\end{lemma}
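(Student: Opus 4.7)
The plan is to realize each $V_n$ by a centered Gaussian random variable $\xi_n$ in $X$, to establish weak convergence $\xi_n \Rightarrow 0$ from the pointwise convergence of $V_n$, and to upgrade this to $L^2$ convergence using uniform integrability inherited from Lemma~\ref{lem:Gauscovinequality}. First, I extend the hypotheses from $Y$ to all of $X^*$: the monotonicity $V_n(x^*, x^*) \geq V_{n+1}(x^*, x^*)$ on $Y$ extends to $X^*$ by continuity of the bilinear forms $V_n - V_{n+1}$ and density of $Y$, so by Lemma~\ref{lem:VgeqWthenVgammageqWgamma} the sequence $(\gamma(V_n))_{n \geq n_0}$ is monotonically decreasing and finite. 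The continuity estimate \eqref{eq:contofVduetogammanormtikinula} shows that $x^* \mapsto V_n(x^*, x^*)^{1/2}$ is Lipschitz with constant $\gamma(V_n) \leq \gamma(V_{n_0})$, so equicontinuity together with pointwise convergence on the dense set $Y$ yields $V_n(x^*, x^*) \downarrow 0$ for every $x^* \in X^*$.

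Next, for $n \geq n_0$ let $\xi_n$ be a centered $X$-valued Gaussian with covariance $V_n$. Applying Lemma~\ref{lem:Gauscovinequality} with $\phi = \|\cdot\|^p$ to $V_n \leq V_{n_0}$ gives $\mathbb E \|\xi_n\|^p \leq \mathbb E \|\xi_{n_0}\|^p < \infty$ for every $p \geq 1$, so $\{\|\xi_n\|^2\}_{n \geq n_0}$ is uniformly integrable. Pointwise convergence $V_n(x^*, x^*) \to 0$ yields pointwise convergence of the characteristic functions $\mathbb E e^{i \langle \xi_n, x^* \rangle} = e^{-V_n(x^*, x^*)/2} \to 1$, while the Anderson/Gaussian comparison inequality transfers the Radon tightness of the law of $\xi_{n_0}$ to the whole sequence (since $\mu_n(K) \geq \mu_{n_0}(K)$ for any convex symmetric $K$ when $V_n \leq V_{n_0}$), so $\xi_n \Rightarrow 0$ weakly on $X$. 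For any $M > 0$ the bounded continuous map $x \mapsto \|x\|^2 \wedge M^2$ gives $\mathbb E (\|\xi_n\|^2 \wedge M^2) \to 0$, and uniform integrability makes $\sup_n \mathbb E \|\xi_n\|^2 \mathbf{1}_{\|\xi_n\| > M}$ arbitrarily small for $M$ large, so together these imply $\gamma(V_n)^2 = \mathbb E \|\xi_n\|^2 \to 0$.

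The main obstacle is the tightness step: in infinite dimensions, weak convergence of Gaussian measures does not follow from pointwise convergence of characteristic functions alone, so a comparison tool is needed to inherit tightness from the single Radon measure $\mu_{n_0}$. An alternative route, avoiding tightness entirely, is to use Lemma~\ref{lem:TfromVizveniiiilsya} and Lemma~\ref{lem:gammaofV=gammaofTfits}: write $V_{n_0}(x^*, y^*) = \langle T^* x^*, T^* y^* \rangle_H$ for some $T \in \mathcal L(H, X)$, use the domination $V_n \leq V_{n_0}$ to produce a monotonically decreasing sequence $(A_n)$ of positive contractions on $H$ with $V_n(x^*, y^*) = \langle A_n^{1/2} T^* x^*, A_n^{1/2} T^* y^* \rangle_H$ and $A_n \to 0$ strongly, and then invoke $\gamma$-dominated convergence to conclude $\gamma(V_n) = \|T A_n^{1/2}\|_{\gamma(H, X)} \to 0$.
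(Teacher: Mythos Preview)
Your proof is correct, and your main route is genuinely different from the paper's. The paper proceeds via the operator picture that you sketch as your alternative: it represents each $V_n$ by an operator $T_n\in\mathcal L(H,X)$ with $V_n(x^*,x^*)=\|T_n^*x^*\|^2$ (Lemma~\ref{lem:TfromVizveniiiilsya}), observes that $T_n^*x^*\to 0$ with the domination $\|T_n^*x^*\|\le\|T_1^*x^*\|$, and then invokes the $\gamma$-dominated convergence theorem \cite[Theorem 9.4.2]{HNVW2} together with Lemma~\ref{lem:gammaofV=gammaofTfits} to conclude $\gamma(V_n)=\|T_n\|_{\gamma(H,X)}\to 0$. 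Your primary argument instead stays on the probabilistic side: you realize the $V_n$ by Gaussians $\xi_n$, obtain tightness of the laws from Anderson's comparison inequality (the closed convex symmetric hull of a compact set is compact, so the Radon tightness of $\mu_{n_0}$ transfers), deduce $\xi_n\Rightarrow 0$ from tightness plus convergence of characteristic functions, and finish with uniform integrability of $\|\xi_n\|^2$ via the moment bounds from Lemma~\ref{lem:Gauscovinequality}. The paper's route is shorter once the $\gamma$-calculus from \cite{HNVW2} is available; your route is more self-contained and makes the underlying probabilistic mechanism (weak convergence of the associated Gaussian measures) explicit, at the cost of the extra tightness step you correctly flag as the nontrivial point.
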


\begin{proof}
Without loss of generality assume that $\gamma(V_1)<\infty$. Note that by Lemma \ref{lem:VgeqWthenVgammageqWgamma} the sequence $(\gamma(V_n))_{n\geq 1}$ is monotone and bounded by $\gamma(V_1)$.
First of all notice that $V_n(x^*, x^*) \to 0$ for any $x^*\in X^*$ monotonically as $n\to \infty$. Indeed, fix $x^*\in X^*$. For any $\eps>0$ fix $x^*_{\eps} \in Y$ such that $\|x^*-x^*_{\eps}\| < \eps$. Then $(V_n(x^*_{\eps},x^*_{\eps}))_{n\geq 1}$ vanishes monotonically, and 
$$
|V_n(x^*,x^*)^{1/2} - V_n(x^*_{\eps},x^*_{\eps})^{1/2}| \leq \|x^* -x^*_{\eps}\| \gamma(V_n) \leq \eps \gamma(V_1),
$$ 
by \eqref{eq:contofVduetogammanormtikinula}. Thus $(V_n(x^*,x^*))_{n\geq 1}$ vanishes monotonically if we let $\eps \to 0$.

By Lemma \ref{lem:TfromVizveniiiilsya} we may assume that there exists a separable Hilbert space $H$ and a sequence of operators $(T_n)_{n\geq 1}$ from  $H$ to $X$ such that $V_n(x^*, x^*) = \|T_n^* x^*\|^2$ for all $x^*\in X^*$ (note that we are working with one Hilbert space since all the separable Hilbert spaces are isometrically isomorphic). Let $T\in \mathcal L(H,X)$ be the zero operator. Then $T_n^* x^* \to T^*x^* = 0$ as $n\to \infty$ for all $x^*\in X^*$, and hence by \cite[Theorem 9.4.2]{HNVW2}, Lemma \ref{lem:gammaofV=gammaofTfits}, and the fact that $\|T_nx^*\| \leq \|T_1x^*\|$ for all $x^*\in X^*$
\[
 \lim_{n\to \infty}\gamma(V_n) =\lim_{n\to \infty}\|T_n\|_{\gamma(H,X)} = \|T\|_{\gamma(H,X)} = 0.
\]
\end{proof}

The following lemma follows for Lemma \ref{lem:||V+W||gammaleq||V||gamma+||W||gamma} and \ref{lem:V_nmonto0thenthesamefor0}.

\begin{lemma}\label{lem:V_nmontoVthenthesameforgamma(V)}
 Let $X$ be a separable reflexive Banach space, $Y\subset X^*$ be a dense subset, $V$, $(V_n)_{n\geq 1}$ be symmetric continuous nonnegative bilinear forms on $X^* \times X^*$ such that $V_n(x^*, x^*) \nearrow V(x^*, x^*)$ for any $x^*\in Y$ monotonically as $n\to \infty$. Then $\gamma(V_n)\nearrow \gamma(V)$ monotonically as $n\to \infty$.
\end{lemma}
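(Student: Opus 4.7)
The plan is to prove that $L := \lim_n \gamma(V_n)$ satisfies $L \leq \gamma(V)$ and $L \geq \gamma(V)$ separately. For the upper bound and monotonicity, I first note that bilinearity, continuity of each $V_n$ and $V$, and density of $Y$ together extend the monotone inequalities $V_n(x^*,x^*) \leq V_{n+1}(x^*,x^*) \leq V(x^*,x^*)$ from $Y$ to all of $X^*$; Lemma \ref{lem:VgeqWthenVgammageqWgamma} then immediately gives that $(\gamma(V_n))_{n\geq 1}$ is non-decreasing and bounded above by $\gamma(V)$, so $L$ exists in $[0,\gamma(V)]$.

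For the matching lower bound I first handle the case $\gamma(V) < \infty$. Put $W_n := V - V_n$, a symmetric continuous nonnegative bilinear form on $X^* \times X^*$ with $W_n \leq V$, so $\gamma(W_n) \leq \gamma(V) < \infty$ by Lemma \ref{lem:VgeqWthenVgammageqWgamma}. Since $W_n(x^*,x^*) \searrow 0$ monotonically on $Y$, Lemma \ref{lem:V_nmonto0thenthesamefor0} applied to $(W_n)$ yields $\gamma(W_n) \to 0$. Then the triangle inequality from Lemma \ref{lem:||V+W||gammaleq||V||gamma+||W||gamma} applied to $V = V_n + W_n$ gives
\[
\gamma(V) \leq \gamma(V_n) + \gamma(W_n),
\]
and letting $n \to \infty$ delivers $\gamma(V) \leq L$, as required.

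It remains to show $L = \infty$ when $\gamma(V) = \infty$. Suppose for contradiction that $L < \infty$. Setting $V_0 := 0$, each increment $V_n - V_{n-1}$ is a nonnegative bilinear form with $\gamma(V_n - V_{n-1}) \leq \gamma(V_n) \leq L$ by Lemma \ref{lem:VgeqWthenVgammageqWgamma}. Choose independent centered $X$-valued Gaussians $\eta_n$ with covariances $V_n - V_{n-1}$; the partial sums $\xi_n := \sum_{k=1}^n \eta_k$ are centered Gaussians with covariance $V_n$ and $\mathbb E\|\xi_n\|^2 = \gamma(V_n)^2 \leq L^2$. The Ito-Nisio theorem (applicable since the summands are independent and symmetric and the partial sums are bounded in probability) then forces $\xi_n$ to converge almost surely to some centered $X$-valued Gaussian $\xi$, and Fernique's theorem upgrades this to $L^2$, giving $\mathbb E\|\xi\|^2 \leq L^2$. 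The covariance bilinear form of $\xi$ agrees with $V$ on the dense set $Y$ and hence on $X^* \times X^*$ by continuity, so $\gamma(V) \leq L < \infty$, a contradiction.

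The main obstacle is precisely this case $\gamma(V) = \infty$: one cannot plug $W_n$ directly into Lemma \ref{lem:V_nmonto0thenthesamefor0}, because a priori $\gamma(W_n)$ could be infinite for every $n$, so an additional compactness input is needed. An alternative to Ito-Nisio is to combine Proposition \ref{prop:convergenceofrestrictedV_m} and Lemma \ref{lem:V_0isonsubsetofVsoitsgammaislettznakomo} with the finite-dimensional case already treated above (which applies because $\gamma$ is automatically finite on finite-dimensional spaces by Remark \ref{rem:fdinmpliesgamma(V)<infty}): the argument from $\gamma(V)<\infty$ yields $\gamma(V|_{Y_m\times Y_m}) \leq L$ on each finite-dimensional $Y_m \subset X^*$, and Proposition \ref{prop:convergenceofrestrictedV_m} then lets $m \to \infty$ to conclude $\gamma(V) \leq L$.
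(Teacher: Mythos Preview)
Your argument is correct and follows exactly the route the paper indicates: the paper's one-line proof reads ``follows from Lemma~\ref{lem:||V+W||gammaleq||V||gamma+||W||gamma} and~\ref{lem:V_nmonto0thenthesamefor0}'', and your core argument for the case $\gamma(V)<\infty$ is precisely the triangle inequality $\gamma(V)\le\gamma(V_n)+\gamma(V-V_n)$ combined with $\gamma(V-V_n)\to 0$, together with the easy upper bound from Lemma~\ref{lem:VgeqWthenVgammageqWgamma}. You are in fact more careful than the paper in isolating the case $\gamma(V)=\infty$, where $\gamma(V-V_n)$ is not a~priori finite and Lemma~\ref{lem:V_nmonto0thenthesamefor0} cannot be invoked directly; your alternative (b) via finite-dimensional restrictions and Proposition~\ref{prop:convergenceofrestrictedV_m} is the cleanest way to close this, and stays entirely within the paper's toolkit (the It\^o--Nisio route also works, though the a.s.\ convergence of $(\xi_n)$ is perhaps most directly obtained from the martingale convergence theorem in the reflexive space $X$ rather than from boundedness alone).
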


\subsection{$\gamma(\cdot)$ and $\gamma(\cdot)^2$ are not norms}\label{subsec:gammaisnotanirmtuchikakludi}
 Notice that $\gamma(\cdot)$ is not a norm. Indeed, it is easy to see that $\gamma(\alpha V) = \sqrt {\alpha} \gamma(V)$ for any $\alpha\geq 0$ and any nonnegative symmetric bilinear form $V$: if we fix any $X$-valued Gaussian random variable $\xi$ having $V$ as its covariance bilinear form, then $\sqrt{\alpha} \xi$ has ${\alpha} \gamma(V)$ as its covariance bilinear form.

It is a natural question whether $\gamma(\cdot)^2$ satisfies the triangle inequality and hence has the norm properties. It is easy to check the triangle inequality if $X$ is Hilbert: indeed, for any $V$ and $W$
 \[
   \gamma(V+W)^2 = \mathbb E \|\xi_{V+W}\|^2 = \mathbb E \|\xi_{V}\|^2 + \mathbb E \|\xi_{W}\|^2 + 2\mathbb E \langle\xi_V, \xi_W\rangle = \gamma(V)^2 +\gamma(W)^2,
 \]
 where $\xi_V$, $\xi_W$, and $\xi_{V+W}$ are as in the latter proof. 
 
 It turns out that if such a triangular inequality holds for some Banach space $X$, then this Banach space must have a {\em Gaussian type $2$} (see \cite[Subsection 7.1.d]{HNVW2}). Indeed, let $X$ be such that for all nonnegative symmetric bilinear forms $V$ and $W$ on $X^*\times X^*$
 \begin{equation}\label{eq:trianglineqforgamma2}
  \gamma(V+W)^2 \leq \gamma(V)^2 + \gamma(W)^2.
 \end{equation}
Fix $(x_i)_{i=1}^n\subset X$ and a sequence of independent standard Gaussian random variables $(\xi_i)_{i=1}^n$. For each $i=1,\ldots,n$ define a symmetric bilinear form $V_i:X^* \times X^* \to \mathbb R$ as $V_i(x^*, y^*) := \langle x_i, x^*\rangle \cdot \langle x_i, y^*\rangle$. Let $V = V_1+ \cdots + V_n$. Then by \eqref{eq:trianglineqforgamma2} and the induction argument
\begin{align*}
 \mathbb E \Bigl\|\sum_{i=1}^n \xi_i x_i\Bigr\|^2 \stackrel{(*)}= \gamma(V)^2 \leq \sum_{i=1}^n \gamma(V_i)^2 \stackrel{(**)}= \sum_{i=1}^n \mathbb E \|\xi_i x_i\|^2 = \sum_{i=1}^n  \|x_i\|^2,
\end{align*}
where $(*)$ follows from the fact that $\sum_{i=1}^n \xi_i x_i$ is a centered Gaussian random variable the fact that for all $x^*, y^*\in X^*$
\[
 \mathbb E \Bigl\langle\sum_{i=1}^n \xi_i x_i, x^*\Bigr\rangle \cdot \Bigl\langle\sum_{i=1}^n \xi_i x_i, y^*\Bigr\rangle = \sum_{i=1}^n \langle x_i, x^* \rangle\cdot \langle x_i, y^*\rangle = V(x^*, y^*),
\]
while $(**)$ follows analogously by exploiting the fact that $\xi_i x_i$ is a centered Gaussian random variable with the covariance bilinear form $V_i$. Therefore by \cite[Definition 7.1.17]{HNVW2}, $X$ has a Gaussian type 2 with the corresponding Gaussian type constant $\tau_{2,X}^{\gamma}=1$. In the following proposition we show that this condition yields that $X$ is Hilbert, and thus we conclude that $\gamma(\cdot)^2$ defines a norm if and only if $X$ is a Hilbert space.

\begin{proposition}\label{prop:Gaustype2=1=Hilbert}
 Let $X$ be a Banach space such that its Gaussian type $2$ constant equals $1$. Then $X$ is Hilbert.
\end{proposition}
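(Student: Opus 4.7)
My plan is to prove the stronger identity $\Phi(x, y) := \E\|\gamma_1 x + \gamma_2 y\|^2 = \|x\|^2 + \|y\|^2$ for all $x, y \in X$, where $\gamma_1, \gamma_2$ are independent standard Gaussians. Once this is in hand, a second evaluation will produce the parallelogram law, and the Jordan--von Neumann theorem will identify $X$ as a Hilbert space. The hypothesis $\tau_{2,X}^\gamma = 1$ immediately yields the upper bound $\Phi(x, y)\leq \|x\|^2 + \|y\|^2$; the real task is to upgrade this to an equality by exploiting the rotational invariance of the $2$-dimensional standard Gaussian.

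By $O(2)$-invariance of the law of $(\gamma_1, \gamma_2)$, for every $\theta \in \R$ one has
\[
 \gamma_1 x + \gamma_2 y \stackrel{d}{=} \gamma_1 x_\theta + \gamma_2 y_\theta,\qquad x_\theta := \cos\theta\, x + \sin\theta\, y,\ y_\theta := -\sin\theta\, x + \cos\theta\, y,
\]
so applying the Gaussian type-$2$ inequality with constant $1$ to the rotated pair gives
\[
 \Phi(x, y) = \Phi(x_\theta, y_\theta) \leq \|x_\theta\|^2 + \|y_\theta\|^2 \qquad \text{for every } \theta \in \R. \quad (\star)
\]
Separately, the polar decomposition $(\gamma_1, \gamma_2) = R(\cos\Theta, \sin\Theta)$ with $R \perp \Theta$, $\E R^2 = 2$, and $\Theta$ uniform on $[0, 2\pi]$ yields
\[
 \Phi(x, y) = \frac{1}{\pi}\int_0^{2\pi}\|\cos\theta\, x + \sin\theta\, y\|^2\ud\theta.
\]
Since the function $\theta\mapsto \|\cos\theta\, x+\sin\theta\, y\|^2$ is continuous and $\pi$-periodic, the changes of variable $\theta\mapsto -\theta$ and $\theta\mapsto \pi/2-\theta$ show that both $\int_0^{2\pi}\|x_\theta\|^2\ud\theta$ and $\int_0^{2\pi}\|y_\theta\|^2\ud\theta$ are equal to $\pi\Phi(x,y)$.

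Integrating $(\star)$ over $[0, 2\pi]$ now produces $2\pi\Phi(x,y)\leq 2\pi\Phi(x,y)$, with equality. Since the right-hand side of $(\star)$, viewed as a function of $\theta$, is continuous, is everywhere at least $\Phi(x,y)$, and has the same integral over $[0, 2\pi]$ as the constant $\Phi(x,y)$, it must equal $\Phi(x,y)$ for every $\theta$. Setting $\theta = 0$ yields $\Phi(x, y) = \|x\|^2 + \|y\|^2$, while setting $\theta = \pi/4$ yields $\Phi(x, y) = \tfrac12(\|x+y\|^2 + \|x-y\|^2)$. Equating the two gives the parallelogram identity
\[
 \|x+y\|^2 + \|x-y\|^2 = 2(\|x\|^2 + \|y\|^2), \qquad x, y \in X,
\]
so $X$ is a Hilbert space by the Jordan--von Neumann theorem.

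The crucial point is that the type-$2$ constant being \emph{exactly} $1$ (rather than merely finite), in combination with rotational invariance, forces the inequality $(\star)$ to be a pointwise equality; without this sharp hypothesis, integration would give only $2\pi\Phi\leq 2\pi\Phi$ with slack, and no such rigidity would follow. The remaining pieces---the polar decomposition, the two substitutions, and the appeal to Jordan--von Neumann---are routine.
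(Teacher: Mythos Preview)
Your proof is correct and takes a genuinely different route from the paper's. The paper reduces to two-dimensional subspaces and invokes an external geometric fact (due to Santos) that the unit ball of any two-dimensional normed space is contained in an ellipse touching it in at least two pairs of boundary points; it then compares $\|\cdot\|$ to the Hilbert norm $\vertiii{\cdot}$ associated with that ellipse, showing via the chain
\[
2 = \E\vertiii{\gamma_1 x_1 + \gamma_2 x_2}^2 \leq \E\|\gamma_1 x_1 + \gamma_2 x_2\|^2 \leq \|x_1\|^2 + \|x_2\|^2 = 2
\]
that all inequalities collapse, hence $\|\cdot\| = \vertiii{\cdot}$ on a set of full measure and therefore everywhere. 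Your argument instead exploits directly the $O(2)$-invariance of $(\gamma_1,\gamma_2)$: the type-$2$ inequality applied to every rotated pair $(x_\theta,y_\theta)$, averaged over $\theta$, becomes an identity by the polar formula $\Phi(x,y)=\tfrac{1}{\pi}\int_0^{2\pi}\|x_\theta\|^2\,\dd\theta$, and continuity then forces pointwise equality in $(\star)$; reading off $\theta=0$ and $\theta=\pi/4$ yields the parallelogram law.

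Your approach is more self-contained (no appeal to the ellipse lemma) and arguably cleaner, isolating the single mechanism---averaging a one-parameter family of sharp inequalities into an identity---that makes the constant $1$ rigid. The paper's approach, on the other hand, makes the geometry explicit and adapts symmetrically to the cotype-$2$ case (via the dual ellipse). One cosmetic remark: the changes of variable you cite ($\theta\mapsto -\theta$ and $\theta\mapsto \pi/2-\theta$) are not quite the ones needed; what you actually use is that $y_\theta = x_{\theta+\pi/2}$ together with $2\pi$-periodicity, which immediately gives $\int_0^{2\pi}\|y_\theta\|^2\,\dd\theta = \int_0^{2\pi}\|x_\theta\|^2\,\dd\theta = \pi\Phi(x,y)$.
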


\begin{proof}
Due to the parallelogram identity it is sufficient to show that every two dimensional space of $X$ is Hilbert; consequently, without loss of generality we can assume that $X$ is two dimensional. We need to show that the unit ball of $X$ is an ellipse as any ellipse corresponds to an inner product (see e.g.\ \cite{Day47}). Let $B\in X \simeq \mathbb R^2$ be the unit ball of $X$. Then by \cite[Theorem 1]{San96} there exists an ellipse $E \in X$ containing $B$ such that $\partial B$ and $\partial E$ intersect in at least two pairs of points. Let us denote these pairs by $(x_1, -x_1)$ and $(x_2, -x_2)$. Notice that both $x_1$ and $x_2$ are nonzero and are not collinear. Let $\vertiii{\cdot}$ be the norm associated to $E$. Then
\begin{equation}\label{||||||dominatedby||||}
 \vertiii{x} \leq \|x\|,\;\;\; x\in X
\end{equation}
as $B\subset E$, and $\vertiii{x_1} = \|x_1\|=\vertiii{x_2} = \|x_2\| =1$ (as both points are in $\partial B \cap \partial E$). Note that $X$ endowed with $\vertiii{\cdot}$ is a Hilbert space by \cite{Day47}, thus it has an inner product $\langle \cdot, \cdot \rangle_E$. Let $\gamma_1$ and $\gamma_2$ be independent standard Gaussian random variables. Then we have that
\begin{equation}\label{eq:forGaustype2constimpliesHilbert}
\begin{split}
 2 &= \vertiii{x_1}^2 +  \vertiii{x_2}^2 =\mathbb E \gamma_1^2 \vertiii{x_1}^2 +  \mathbb E \gamma_2^2 \vertiii{x_2}^2  + \mathbb E  2 \gamma_1\gamma_2 \langle x_1, x_2 \rangle_E\\
 &= \mathbb E \vertiii{\gamma_1 x_1 + \gamma_2x_2}^2 \stackrel{(*)}\leq \mathbb E \|\gamma_1 x_1 + \gamma_2x_2\|^2 \stackrel{(**)}\leq \|x_1\|^2 + \|x_2\|^2 = 2,
\end{split}
\end{equation}
where $(*)$ holds by \eqref{||||||dominatedby||||}, and $(**)$ holds since $\tau_{2, X}^1 = 1$ (see \cite[Definition 7.1.17]{HNVW2}). Therefore we have that every inequality in the estimate above is actually an equality, and hence $\mathbb E \vertiii{\gamma_1 x_1 + \gamma_2x_2}^2 = \mathbb E \|\gamma_1 x_1 + \gamma_2x_2\|^2$. Thus by \eqref{||||||dominatedby||||} $\vertiii{\gamma_1 x_1 + \gamma_2x_2} = \|\gamma_1 x_1 + \gamma_2x_2\|$ a.s., and as $x_1$ and $x_2$ are not collinear and $X$ is two dimensional, $\gamma_1 x_1 + \gamma_2x_2$ has a nonzero distribution density on the whole $X$, so we have that $\vertiii{x} = \|x\|$ for a.e.\ $x\in X$ (and by continuity for any $x\in X$), and the desired follows. 
\end{proof}

\begin{remark}
Assume that $X$ has a Gaussian {\em cotype} $2$ constant equals $1$. Then the same proof will yield that $X$ is Hilbert,  but now one needs to find an ellipse $E$ {\em inside} $B$ such that $\partial B$ and $\partial E$ intersect in at least two pairs of points. In order to find such an ellipse it is sufficient to find an ellipse $E'\in X^*$ containing the unit ball $B' \subset X^*$ such that $\partial B'$ and $\partial E'$ intersect in at least two pairs of points, and then set $B$ to be the unit ball of a space $Y^*$, where $Y$ is a Hilbert space having $E'$ as its unit ball. Then \eqref{eq:forGaustype2constimpliesHilbert} will hold true but with $\geq$ instead of $\leq$.
\end{remark}

\subsection{Finite dimensional case} Even though a Gaussian characteristic is well-defined only for some nonnegative symmetric forms, it can be extended in a proper continuous way to all the symmetric forms given $X$ is finite dimensional. Let $X$ be a finite dimensional Banach space. Notice that in this case $\gamma(V)<\infty$ for any nonnegative symmetric bilinear form $V$ (see Remark  \ref{rem:fdinmpliesgamma(V)<infty}). Let us define $\gamma(V)$ for a general symmetric $V\in X^{**}\otimes X^{**} = X\otimes X$ in the following way:
 \begin{equation}\label{eq:gammaofVgenerelVdef}
     \gamma(V) := \inf\{\gamma(V^+) + \gamma(V^-): V^+ ,V^- \text{ are nonnegative and } V\!=\!V^+\!-\!V^-\}.
 \end{equation}
 Notice that $\gamma(V)$ is well-defined and finite for any symmetric $V$. Indeed, by a well known linear algebra fact (see e.g.\ \cite[Theorem 6.6 and 6.10]{SR13}) any symmetric bilinear form $V$ has an eigenbasis $(x_n^*)_{n=1}^d$ of $X^*$ that {\em diagonalizes} $V$, i.e.\ there exists $(\lambda_n)_{n= 1}^d \in \mathbb R$ such that for all $(a_n)_{n= 1}^d, (b_n)_{n= 1}^d \in \mathbb R$ we have that for $x^* = \sum_{n=1}^d a_n x_n^*$ and $y^* = \sum_{n=1}^d b_n x_n^*$
 \[
  V(x^*, y^*) = \sum_{n=1}^d \sum_{m=1}^d a_n b_m V(x_n^*, x_m^*) = \sum_{n=1}^d \lambda_n a_n b_n.
 \]
 Therefore it is sufficient to define
 \[
  V^+(x^*, y^*) := \sum_{n=1}^d \mathbf 1_{\lambda_n\geq 0} \lambda_n  a_n b_n,\;\;\;
  V^-(x^*, y^*) := \sum_{n=1}^d \mathbf 1_{\lambda_n< 0} (-\lambda_n)  a_n b_n
 \]
and then $\gamma (V) \leq \gamma(V^+) + \gamma(V^-)<\infty$ due to the fact that $V^+$ and $V^-$ are nonnegative and by Remark \ref{rem:fdinmpliesgamma(V)<infty}. (In fact, one can check that $\gamma(V) = \gamma(V^+) + \gamma(V^-)$, but we will not need this later, so we leave this fact without a proof).

 Now we will develop some basic and {\em elementary} (but nonetheless  important) properties of such a general $\gamma(\cdot)$.
 
\begin{lemma}
 Let $V:X^* \times X^* \to \mathbb R$ be a {\em nonnegative} symmetric bilinear form. Then $\gamma(V)$ defined by \eqref{eq:gammaofVgenerelVdef} coincides with $\gamma(V)$ defined in Subsection \ref{subsec:gausscharactbasicdefinitions}. In other words, these definitions {\em agree} given $V$ is nonnegative.
\end{lemma}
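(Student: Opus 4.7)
The claim is that when $V$ is nonnegative, the infimum-over-decompositions definition \eqref{eq:gammaofVgenerelVdef} returns the same value as the original Gaussian characteristic from Subsection \ref{subsec:gausscharactbasicdefinitions}. Denote the original value by $\gamma_0(V)$ and the value given by \eqref{eq:gammaofVgenerelVdef} by $\gamma_1(V)$; I want to prove both inequalities.

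For $\gamma_1(V) \leq \gamma_0(V)$, I would simply take the trivial decomposition $V = V - 0$, which is admissible since $V \geq 0$ and the zero bilinear form is nonnegative with $\gamma_0(0) = 0$ (its associated Gaussian is the Dirac mass at $0$). Plugging this into the infimum in \eqref{eq:gammaofVgenerelVdef} gives $\gamma_1(V) \leq \gamma_0(V) + \gamma_0(0) = \gamma_0(V)$.

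For the reverse inequality $\gamma_0(V) \leq \gamma_1(V)$, fix an arbitrary decomposition $V = V^+ - V^-$ with $V^+, V^-$ nonnegative symmetric continuous bilinear forms. Since $V^-(x^*,x^*) \geq 0$ for all $x^* \in X^*$, we have $V(x^*,x^*) \leq V^+(x^*,x^*)$ pointwise. Because $X$ is finite dimensional and hence separable, Lemma \ref{lem:VgeqWthenVgammageqWgamma} applies and yields $\gamma_0(V) \leq \gamma_0(V^+)$. Trivially $\gamma_0(V^+) \leq \gamma_0(V^+) + \gamma_0(V^-)$, so $\gamma_0(V) \leq \gamma_0(V^+) + \gamma_0(V^-)$. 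Taking the infimum over all admissible decompositions on the right gives $\gamma_0(V) \leq \gamma_1(V)$.

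There is no real obstacle here; the statement is a compatibility check between two definitions, and both inequalities reduce to monotonicity of the Gaussian characteristic under domination of the quadratic form on the diagonal, together with the fact that the trivial decomposition is admissible when $V$ itself is nonnegative. One small point worth mentioning is that $\gamma_0(V^\pm)$ and hence $\gamma_1(V)$ are automatically finite in the finite dimensional setting by Remark \ref{rem:fdinmpliesgamma(V)<infty}, so no convergence issues arise when interpreting the infimum.
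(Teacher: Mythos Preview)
Your proof is correct and follows essentially the same approach as the paper: both directions rely on the trivial decomposition $V = V - 0$ for one inequality and on the monotonicity Lemma~\ref{lem:VgeqWthenVgammageqWgamma} (applied via $V \leq V^+$, equivalently $V^+ = V + V^-$) for the other.
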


\begin{proof}
 Fix nonnegative $V^+$ and $V^-$ such that $V = V^+ - V^-$. Then $\gamma(V^+) + \gamma(V^-) = \gamma(V + V^-) + \gamma(V^-) \geq \gamma(V) + \gamma(V^-) \geq \gamma(V)$ by Lemma \ref{lem:VgeqWthenVgammageqWgamma}, so $\gamma(V)$ does not change.
\end{proof}

\begin{lemma}\label{lem:triangleineqforgammageneral}
 Let $V, W:X^*\times X^* \to \mathbb R$ be symmetric bilinear forms. Then $\gamma(V) - \gamma(W) \leq \gamma(V-W)$.
\end{lemma}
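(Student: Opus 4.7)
The plan is to reduce this to the triangle inequality for nonnegative symmetric bilinear forms (Lemma \ref{lem:||V+W||gammaleq||V||gamma+||W||gamma}, which applies since any finite-dimensional space is reflexive) together with the infimum definition \eqref{eq:gammaofVgenerelVdef}. Rewriting the desired inequality as $\gamma(V) \leq \gamma(W) + \gamma(V-W)$, the strategy is to take near-optimal nonnegative decompositions of $W$ and of $V-W$ and add them to produce a nonnegative decomposition of $V$.

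Concretely, I would fix an arbitrary $\eps > 0$. By the definition \eqref{eq:gammaofVgenerelVdef}, choose nonnegative symmetric bilinear forms $W^+, W^-$ with $W = W^+ - W^-$ and
\[
\gamma(W^+) + \gamma(W^-) \leq \gamma(W) + \eps,
\]
and likewise nonnegative $A^+, A^-$ with $V - W = A^+ - A^-$ and
\[
\gamma(A^+) + \gamma(A^-) \leq \gamma(V-W) + \eps.
\]
Then
\[
V = W + (V-W) = (W^+ + A^+) - (W^- + A^-),
\]
which is a decomposition of $V$ into the difference of two nonnegative symmetric bilinear forms. Plugging this into the infimum definition of $\gamma(V)$ gives
\[
\gamma(V) \leq \gamma(W^+ + A^+) + \gamma(W^- + A^-).
\]

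Applying Lemma \ref{lem:||V+W||gammaleq||V||gamma+||W||gamma} to each of the two summands on the right (both summands of each sum are nonnegative symmetric bilinear forms, and $X$ is finite-dimensional, hence reflexive) yields
\[
\gamma(V) \leq \gamma(W^+) + \gamma(A^+) + \gamma(W^-) + \gamma(A^-) \leq \gamma(W) + \gamma(V-W) + 2\eps.
\]
Letting $\eps \to 0$ gives $\gamma(V) \leq \gamma(W) + \gamma(V-W)$, which rearranges to the claim.

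There is no real obstacle here: the only point to verify is that the $\gamma$ of nonnegative forms appearing in the decompositions is finite (so that adding the two inequalities makes sense), which is automatic from Remark \ref{rem:fdinmpliesgamma(V)<infty} since $X$ is finite-dimensional. The proof is thus essentially a formal manipulation of the infimum definition combined with the already established triangle inequality for nonnegative forms.
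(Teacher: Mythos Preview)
Your proof is correct and follows essentially the same argument as the paper: both take near-optimal nonnegative decompositions of $W$ and of $V-W$, add them to obtain a decomposition of $V$, and then apply the triangle inequality for nonnegative forms (Lemma~\ref{lem:||V+W||gammaleq||V||gamma+||W||gamma}) together with the infimum definition~\eqref{eq:gammaofVgenerelVdef}. The only differences are notational (the paper writes $U$ for $V-W$ and presents the estimate as a bound on $\gamma(V)-\gamma(W)$ rather than on $\gamma(V)$).
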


\begin{proof}
 Denote $V-W$ by $U$. Fix $\eps>0$. Then there exist symmetric nonnegative bilinear forms $W^+,W^-,U^+,U^-$ such that $W= W^+-W^-$, $U=U^+-U^-$, and
\[
 \gamma(W) \geq \gamma(W^+) + \gamma(W^-) - \eps,
\]
\[
 \gamma(U) \geq \gamma(U^+) + \gamma(U^-) - \eps.
\]
Then since $V = U + W$ by \eqref{eq:gammaofVgenerelVdef} and Lemma \ref{lem:||V+W||gammaleq||V||gamma+||W||gamma}
\begin{align*}
 \gamma(V) - \gamma(W) &= \gamma((W^++U^+)-(W^-+U^-)) - \gamma(W^+-W^-)\\
 &\leq \gamma(W^++U^+)+ \gamma(W^-+U^-) - \gamma(W^+) - \gamma(W^-) + \eps\\
 &\leq \gamma(U^+)+ \gamma(U^-)  + \eps \leq \gamma(U) + 2\eps,
\end{align*}
and by sending $\eps\to 0$ we conclude the desired.
\end{proof}

\begin{lemma}\label{lem:gammaalphaV=alpha^12gammaV}
 Let $V:X^*\times X^* \to \mathbb R$ be a symmetric bilinear form. Then $\gamma(V) = \gamma(-V)$ and $\gamma(\alpha V) = \sqrt \alpha \gamma(V)$ for any $\alpha\geq 0$.
\end{lemma}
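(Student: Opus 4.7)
The plan is to derive both identities directly from the definition \eqref{eq:gammaofVgenerelVdef} together with the corresponding identities for nonnegative bilinear forms, which were already established at the beginning of Subsection \ref{subsec:gammaisnotanirmtuchikakludi}. Recall that for any nonnegative symmetric bilinear form $W$ and any $\alpha \geq 0$ one has $\gamma(\alpha W) = \sqrt{\alpha}\,\gamma(W)$ (obtained by rescaling the associated Gaussian random variable), and this is really the only nontrivial input needed.

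For the first identity $\gamma(V) = \gamma(-V)$, I would simply observe that the map $(V^+, V^-) \mapsto (V^-, V^+)$ sets up a bijection between decompositions of $V$ as a difference of nonnegative symmetric bilinear forms and decompositions of $-V$ with the same property. Since the functional $\gamma(V^+) + \gamma(V^-)$ on the right-hand side of \eqref{eq:gammaofVgenerelVdef} is symmetric under swapping its two arguments, taking the infimum gives the same number in both cases.

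For the scaling identity, I would treat $\alpha = 0$ separately: then $\alpha V = 0$ admits the trivial decomposition $0 = 0 - 0$ and $\gamma(0) = 0 = \sqrt{0}\,\gamma(V)$, handled by the convention that the Gaussian characteristic of the zero form vanishes. For $\alpha > 0$, the map $(V^+, V^-) \mapsto (\alpha V^+, \alpha V^-)$ is a bijection between the decompositions of $V$ and those of $\alpha V$ into differences of nonnegative symmetric bilinear forms. Applying the nonnegative scaling identity to each summand yields
\[
\gamma(\alpha V^+) + \gamma(\alpha V^-) = \sqrt{\alpha}\bigl(\gamma(V^+) + \gamma(V^-)\bigr),
\]
and taking the infimum over all such decompositions on both sides gives $\gamma(\alpha V) = \sqrt{\alpha}\,\gamma(V)$.

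There is no real obstacle here: both claims are essentially bookkeeping on top of the definition, once one has the nonnegative case in hand. The only point requiring minor attention is ensuring that the rescaling $V^\pm \mapsto \alpha V^\pm$ really is a bijection on decompositions (which fails only at $\alpha = 0$, hence the separate treatment of that case).
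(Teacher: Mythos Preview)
Your proof is correct and follows essentially the same approach as the paper: both claims are read off directly from the definition \eqref{eq:gammaofVgenerelVdef}, reducing the scaling identity to the nonnegative case already handled in Subsection~\ref{subsec:gammaisnotanirmtuchikakludi}. Your version is slightly more explicit (spelling out the bijections on decompositions and separating the case $\alpha=0$), but there is no substantive difference.
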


\begin{proof}
 The first part follows directly from \eqref{eq:gammaofVgenerelVdef}. For the second part we have that due to \eqref{eq:gammaofVgenerelVdef} it is enough to justify $\gamma(\alpha V) = \sqrt \alpha \gamma(V)$ only for nonnegative $V$, which was done in Subsection \ref{subsec:gammaisnotanirmtuchikakludi}.
\end{proof}

\begin{proposition}\label{prop:continuity+squreboundednessofgammaidukuryu}
 The function $\gamma(\cdot)$ defined by \eqref{eq:gammaofVgenerelVdef} is continuous on the linear space of all symmetric bilinear forms endowed with $\|\cdot\|$ defined by \eqref{eq:normofbilinearfromdefvoina}. Moreover, $\gamma(V)^2 \lesssim_X \|V\|$ for any symmetric bilinear form $V:X^* \times X^* \to \mathbb R$.
\end{proposition}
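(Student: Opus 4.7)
The plan is to prove the two claims in sequence: first establish the quadratic bound $\gamma(V)^2 \lesssim_X \|V\|$, then deduce continuity (in fact $1/2$-Hölder continuity) from it together with the reverse triangle inequality from Lemma~\ref{lem:triangleineqforgammageneral}.

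\textbf{Step 1: The bound on nonnegative forms.} First I would prove $\gamma(V)^2 \lesssim_X \|V\|$ under the assumption that $V$ is nonnegative. By Lemmas~\ref{lem:TfromVizveniiiilsya} and \ref{lem:gammaofV=gammaofTfits}, there exist a Hilbert space $H$ and $T \in \mathcal L(H,X)$ with $V(x^*,y^*) = \langle T^* x^*, T^* y^*\rangle$ and $\gamma(V) = \|T\|_{\gamma(H,X)}$. Unravelling the definition~\eqref{eq:normofbilinearfromdefvoina} one sees $\|V\| = \|T^*\|^2 = \|T\|^2$. Now exploit finite dimensionality: fix once and for all a Hilbertian norm $\vertiii{\cdot}$ on $X$ equivalent to $\|\cdot\|$, say with constants $c_X \vertiii{x} \leq \|x\| \leq C_X \vertiii{x}$ (these depend on $X$ only). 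Since the $\gamma$-norm into a Hilbert space equals the Hilbert--Schmidt norm, and any rank-$d$ operator has Hilbert--Schmidt norm at most $\sqrt{d}$ times its operator norm, one gets
\[
\|T\|_{\gamma(H,X)} \leq C_X \|T\|_{\gamma(H,(X,\vertiii{\cdot}))} \leq C_X\sqrt{d}\, \|T\|_{\mathrm{op},\vertiii{\cdot}} \leq (C_X/c_X)\sqrt{d}\,\|T\|.
\]
Squaring gives $\gamma(V)^2 \lesssim_X \|V\|$ for nonnegative $V$.

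\textbf{Step 2: Extension to all symmetric $V$.} For general symmetric $V$, I would use the eigenbasis decomposition already introduced right after~\eqref{eq:gammaofVgenerelVdef}: diagonalise $V$ with respect to the Hilbertian norm $\vertiii{\cdot}$ to obtain $V = V^+ - V^-$ with eigenvalues $\lambda_i^+$ and $\lambda_i^-$ respectively. In that Hilbert structure the forms $V$, $V^+$, $V^-$ correspond to self-adjoint operators whose $\vertiii{\cdot}$-operator norms are $\max_i|\lambda_i|$, $\max_i \lambda_i^+$, $\max_i\lambda_i^-$ respectively, so $\vertiii{V^\pm} \leq \vertiii{V}$. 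Transferring back to $\|\cdot\|$ via the equivalence of norms yields $\|V^\pm\| \lesssim_X \|V\|$. Applying Step~1 to $V^+$ and $V^-$ and using the definition~\eqref{eq:gammaofVgenerelVdef},
\[
\gamma(V) \leq \gamma(V^+) + \gamma(V^-) \lesssim_X \sqrt{\|V^+\|} + \sqrt{\|V^-\|} \lesssim_X \sqrt{\|V\|},
\]
which is the moreover part.

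\textbf{Step 3: Continuity.} By Lemma~\ref{lem:triangleineqforgammageneral} applied to both $(V,W)$ and $(W,V)$, combined with $\gamma(V-W) = \gamma(W-V)$ from Lemma~\ref{lem:gammaalphaV=alpha^12gammaV}, one has $|\gamma(V)-\gamma(W)| \leq \gamma(V-W)$. Chaining with Step~2 gives
\[
|\gamma(V)-\gamma(W)| \leq \gamma(V-W) \lesssim_X \sqrt{\|V-W\|},
\]
which is $1/2$-Hölder continuity, in particular continuity.

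\textbf{Expected obstacle.} The only subtle point is ensuring that the constant in $\|V^\pm\| \lesssim_X \|V\|$ depends on $X$ alone and not on the particular eigenbasis chosen (since different symmetric forms diagonalise in different bases). Fixing a single auxiliary Hilbertian norm on $X$ at the outset, independent of $V$, handles this: the Banach--Mazur-type constants $c_X, C_X$ are then absolute quantities attached to $X$, and the dimension $d$ is also fixed, so all suppressed constants depend on $X$ alone.
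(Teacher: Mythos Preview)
Your proof is correct, but it proceeds by a genuinely different route than the paper's. The paper never invokes the operator representation of $V$ or Hilbert--Schmidt estimates; instead it fixes a single nonnegative form $U$ (namely $U(x^*,x^*)=c\vertiii{x^*}^2$ for a Euclidean norm on $X^*$ and large enough $c$) that dominates every symmetric $V$ with $\|V\|\le 1$, and then writes $V=(U+V)-U$ to get $\gamma(V)\le\gamma(U+V)+\gamma(U)\le\gamma(2U)+\gamma(U)$, a bound independent of $V$. From this uniform bound on the unit ball the quadratic estimate follows by the scaling $\gamma(\alpha V)=\sqrt{\alpha}\,\gamma(V)$, and continuity then follows exactly as in your Step~3.

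Your approach has the advantage of producing an explicit constant ($C_X^2 d/c_X^2$ in terms of the Banach--Mazur data and the dimension), whereas the paper's constant is the somewhat opaque $(\gamma(2U)+\gamma(U))^2$. The paper's approach, on the other hand, avoids the spectral decomposition altogether and in particular sidesteps the issue you flag in your ``expected obstacle'': since $U$ is fixed independently of $V$, there is no question of basis-dependent constants. Both arguments ultimately yield the same $1/2$-H\"older continuity via Lemma~\ref{lem:triangleineqforgammageneral}.
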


\begin{proof}
 Due to Lemma \ref{lem:triangleineqforgammageneral} and \ref{lem:gammaalphaV=alpha^12gammaV} it is sufficient to show that $\gamma(\cdot)$ is bounded on the unit ball with respect to the norm $\|\cdot\|$ in order to prove the first part of the proposition. Let us show this boundedness. Let $U$ be a fixed symmetric nonnegative element of $X\otimes X$ such that $U+V$ is nonnegative and such that $U(x^*, x^*)\geq V(x^*, x^*)$ for any symmetric $V$ with $\|V\|\leq 1$ (since $X$ is finite dimensional, one can take $U(x^*):= c\vertiii{x^*}^2$ for some Euclidean norm $\vertiii{\cdot}$ on $X^*$ and some big enough constant $c>0$). Fix a symmetric $V:X^* \times X^* \to \mathbb R$ with $\|V\|\leq 1$. Then $V= (U+V) - U$, and by \eqref{eq:gammaofVgenerelVdef}
\[
 \gamma(V) \leq \gamma(U+V) + \gamma(U) = \gamma(2U) + \gamma(U),
\]
which does not depend on $V$.

Let us show the second part. Due to the latter consideration there exists a constant $C_X$ depending only on $X$ such that $\gamma(V)\leq C_X $ if $\|V\|\leq 1$. Therefore by Lemma \ref{lem:gammaalphaV=alpha^12gammaV} we have that for a general symmetric $V$
\[
 \gamma(V)^2 = \|V\| \gamma(V/\|V\|)^2 \leq C_X^2 \|V\|.
\]
\end{proof}

Later we will also need the following elementary lemma.

\begin{lemma}\label{lem:boundedbessofVbyagoodnorms}
There exists vectors $(x_i^*)_{i=1}^n$ in $X^*$ such that
  \begin{equation}\label{eq:defofvertiiiforbilineaforms}
  \vertiii{V}:= \sum_{i=1}^n |V(x^*_i, x^*_i)|
 \end{equation}
 defines a norm on the space of all symmetric bilinear forms on $X^*\times X^*$. In particular we have that  $\|V\| \eqsim_X \vertiii{V}$ for any symmetric bilinear form $V:X^* \times X^*\to \mathbb R$.
\end{lemma}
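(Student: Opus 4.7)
Since $X$ is finite dimensional, say $\dim X = d$, the space $\mathcal S$ of symmetric bilinear forms on $X^* \times X^*$ is itself finite dimensional, of dimension $N:= d(d+1)/2$. Once I exhibit vectors $(x_i^*)_{i=1}^n$ for which $\vertiii{\cdot}$ defined by \eqref{eq:defofvertiiiforbilineaforms} is actually a norm on $\mathcal S$ (absolute homogeneity and the triangle inequality being automatic, what I really need is $\vertiii{V}=0 \Rightarrow V=0$), the second assertion $\|V\|\eqsim_X \vertiii{V}$ follows immediately from the equivalence of all norms on the finite dimensional space $\mathcal S$, with constants depending only on $X$.

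The core of the plan is thus to choose the $x_i^*$'s so that the linear map $L:\mathcal S \to \mathbb R^n$, $L(V):=(V(x_i^*,x_i^*))_{i=1}^n$, is injective; then $\vertiii{V} = \|L(V)\|_{\ell^1}$ is the pullback of the $\ell^1$ norm under an injection and is therefore a norm. The tool is the polarization identity
\[
 V(x^*,y^*) = \tfrac12\bigl(V(x^*+y^*,x^*+y^*) - V(x^*,x^*) - V(y^*,y^*)\bigr),\;\;\; x^*,y^*\in X^*,
\]
which is valid for any symmetric bilinear form $V$.

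Concretely, I fix a basis $(e_i^*)_{i=1}^d$ of $X^*$ and take as $(x_i^*)_{i=1}^n$ the family
\[
 \{e_i^*\}_{i=1}^d \cup \{e_i^* + e_j^* : 1\leq i<j\leq d\},
\]
so $n = d + d(d-1)/2 = N$. If $V\in\mathcal S$ satisfies $V(x_i^*,x_i^*)=0$ for all these $x_i^*$, then in particular $V(e_i^*,e_i^*)=0$ for every $i$ and $V(e_i^*+e_j^*,e_i^*+e_j^*)=0$ for every $i<j$, whence by polarization $V(e_i^*,e_j^*)=0$ for all $i,j$, so $V=0$. Thus $L$ is injective, and since $\dim\mathcal S = N$ it is even bijective. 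This shows $\vertiii{\cdot}$ is a norm on $\mathcal S$, and the equivalence $\|V\| \eqsim_X \vertiii{V}$ follows from finite dimensionality.

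There is essentially no obstacle here; the only thing to keep track of is the dimension count ensuring that the candidate family $(x_i^*)$ is sufficient (not too few vectors, as polarization forces), with the injectivity provided cleanly by polarization.
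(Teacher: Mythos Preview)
Your proof is correct and follows essentially the same approach as the paper: both use a polarization identity on a basis of $X^*$ to produce a finite family of vectors whose diagonal values determine a symmetric bilinear form, and then invoke equivalence of norms on the finite-dimensional space $\mathcal S$. Your choice of $\{e_i^*\}\cup\{e_i^*+e_j^*:i<j\}$ with the identity $V(x^*,y^*)=\tfrac12\bigl(V(x^*+y^*,x^*+y^*)-V(x^*,x^*)-V(y^*,y^*)\bigr)$ is slightly more economical than the paper's family $\{y_i^*\pm y_j^*\}$ with the alternative polarization $V(y_i^*,y_j^*)=\tfrac14\bigl(V(y_i^*+y_j^*,y_i^*+y_j^*)-V(y_i^*-y_j^*,y_i^*-y_j^*)\bigr)$, but the argument is the same in substance.
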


We will demonstrate here the proof for the convenience of the reader.

\begin{proof}
First notice that $\vertiii{\cdot}$ clearly satisfies the triangular inequality. Let us show that there exists a set $(x_i^*)_{i=1}^n$ such that $\vertiii{V}=0$ implies $V=0$. Let $(y_i^*)_{i=1}^d$ be a basis of $X^*$. Then there exist $i, j \in \{1,\ldots,d\}$ such that 
 $$
 0\neq V(y_i^*, y_j^*)  = (V(y_i^* + y_j^*, y_i^* + y_j^*) - V(y_i^* - y_j^*, y_i^* - y_j^*))/4
 $$ 
 (otherwise $V = 0$). This means that for these $i$ and $j$
 $$
 |V(y_i^* + y_j^*, y_i^* + y_j^*)| + |V(y_i^* - y_j^*, y_i^* - y_j^*)| \neq 0,
 $$
 so in particular
 \[
  \sum_{i=1}^d\sum_{j=1}^d |V(y_i^* + y_j^*, y_i^* + y_j^*)| + |V(y_i^* - y_j^*, y_i^* - y_j^*)| \neq 0.
 \]
 It remains to notice that the latter sum has the form \eqref{eq:defofvertiiiforbilineaforms} for a proper choice of $(x_i^*)_{i=1}^n$ independent of $V$.
 
  In order to show the last part of the lemma we need to notice that the space of symmetric bilinear forms is finite dimensional if $X$ is so, so all the norms on the linear space of  symmetric bilinear forms are equivalent, and therefore $\|V\| \eqsim_X \vertiii{V}$ for any symmetric bilinear form $V:X^* \times X^*\to \mathbb R$. 
\end{proof}

\section{Preliminaries}\label{sec:prelim}

We continue with some preliminaries concerning continuous-time martingales.

\subsection{Banach space-valued martingales}\label{subsec:Bsvm}

Let $(\Omega,\mathcal F, \mathbb P)$ be a probability space with a filtration $\mathbb F = (\mathcal F_t)_{t\geq 0}$ which satisfies the usual conditions. Then $\mathbb F$ is right-continuous (see \cite{Kal,JS} for details).

Let $X$ be a Banach space. An adapted process $M:\mathbb R_+ \times \Omega \to X$ is called a {\em martingale} if $M_t\in L^1(\Omega; X)$ and $\mathbb E (M_t|\mathcal F_s) = M_s$ for all $0\leq s\leq t$ (we refer the reader to \cite{HNVW1} for the details on vector-valued integration and vector-valued conditional expectation). It is well known that in the real-valued case any martingale is {\em c\`adl\`ag} (i.e.\ has a version which is right-continuous and that has limits from the left-hand side). The same holds for a general $X$-valued martingale $M$ as well (see \cite{Y17FourUMD,VerPhD}), so one can define $\Delta M_{\tau} := M_{\tau} - \lim_{\eps \searrow 0} M_{0 \vee (\tau - \eps)}$ on $\{\tau<\infty\}$ for any stopping time $\tau$.

Let $1\leq p\leq \infty$. A martingale $M:\mathbb R_+ \times \Omega \to X$ is called an {\em $L^p$-bounded martingale} if $M_t \in L^p(\Omega; X)$ for each $t\geq 0$ and there exists a limit $M_{\infty} := \lim_{t\to \infty} M_t\in L^p(\Omega; X)$ in $L^p(\Omega; X)$-sense. Since $\|\cdot\|:X \to \mathbb R_+$ is a convex function, and $M$ is a martingale, $\|M\|$ is a submartingale by Jensen's inequality, and hence by Doob's inequality (see e.g.\ \cite[Theorem 1.3.8(i)]{KS}) we have that for all $1<p\leq \infty$
\begin{equation}\label{eq:DoobsineqXBanach}
 \mathbb E \sup_{0\leq s\leq t} \|M_s\|^p \eqsim_p \mathbb E \|M_t\|^p,\;\;\; t\geq 0.
\end{equation}

\subsection{Quadratic variation}\label{subsec:qv+q_M}

Let $H$ be a Hilbert space, $M:\mathbb R_+ \times \Omega \to H$ be a local martingale. We define a {\em quadratic variation} of $M$ in the following way:
\begin{equation}\label{eq:defquadvar}
 [M]_t  := \mathbb P-\lim_{{\rm mesh}\to 0}\sum_{n=1}^N \|M(t_n)-M(t_{n-1})\|^2,
\end{equation}
where the limit in probability is taken over partitions $0= t_0 < \ldots < t_N = t$. Note that $[M]$ exists and is nondecreasing a.s.
The reader can find more on quadratic variations in \cite{MetSemi,MP,VY16} for the vector-valued setting, and in \cite{Kal,Prot,MP} for the real-valued setting.

As it was shown in \cite[Proposition 1]{Mey77} (see also \cite[Theorem 2.13]{Roz90} and \cite[Example 3.19]{VY16} for the continuous case) that for any $H$-valued martingale $M$ there exists an adapted process $q_M:\mathbb R_+ \times \Omega \to \mathcal L(H)$ which we will call a {\em quadratic variation derivative}, such that the trace of $q_M$ does not exceed $1$ on $\mathbb R_+ \times \Omega$, $q_M$ is self-adjoint nonnegative on $\mathbb R_+ \times \Omega$, and for any $h,g\in H$ a.s.\
\begin{equation}\label{eq:whyq_Misinportant5element}
  [\langle M, h\rangle,\langle M, g\rangle]_t =\int_0^t \langle q_M^{1/2}(s)h, q_M^{1/2}(s) g \rangle \ud [M]_s,\;\;\; t\geq 0.
\end{equation}

\smallskip

For any martingales $M, N:\mathbb R_+ \times \Omega \to H$ we can define a {\em covariation} $[M,N]:\mathbb R_+ \times \Omega \to \mathbb R$ as $[M,N] := \frac{1}{4}([M+N]-[M-N])$.
Since $M$ and $N$ have c\`adl\`ag versions, $[M,N]$ has a c\`adl\`ag version as well (see \cite[Theorem I.4.47]{JS} and \cite{MetSemi}).

Let $X$ be a Banach space, $M:\mathbb R_+ \times \Omega \to X$ be a local martingale. Fix $t\geq 0$. Then $M$ is said to have a {\em covariation bilinear from} $[\![M]\!]_t$ at $t\geq 0$ if there exists a continuous bilinear form-valued random variable $[\![M]\!]_t:X^* \times X^* \times \Omega \to \mathbb R$ such that for any fixed $x^*, y^*\in X^*$ a.s.\ $[\![M]\!]_t(x^*, y^*) = [\langle M, x^*\rangle, \langle M, y^*\rangle]_t$. 

\begin{remark}\label{rem:basicpropsof[[M]]}
 Let us outline some basic properties of the covariation bilinear forms, which follow directly from \cite[Theorem 26.6]{Kal} (here we presume the existence of $[\![M]\!]_t$ and $[\![N]\!]_t$ for all $t\geq 0$)
\begin{enumerate}[(i)]
 \item $t\mapsto [\![M]\!]_t$ is nondecreasing, i.e.\ $[\![M]\!]_t(x^*, x^*) \geq [\![M]\!]_s(x^*, x^*)$ a.s.\ for all $0\leq s\leq t$ and $x^*\in X^*$,
 \item $ [\![M]\!]^{\tau} =  [\![M^{\tau}]\!]$ a.s.\ for any stopping time $\tau$,
  \item $ \Delta [\![M]\!]_{\tau}(x^*, x^*) =|  \langle \Delta M_{\tau}, x^*\rangle|^2$ a.s.\ for any stopping time $\tau$.
\end{enumerate}
\end{remark}

\begin{remark}\label{rem:existenceof[[M]]givenXisfd}
 If $X$ is finite dimensional, then it is isomorphic to a Hilbert space, and hence existence of $[\![M]\!]_t$ follows from existence of $[M]_t$ with the following estimate a.s.\
 $$
 \|[\![M]\!]_t\| = \sup_{x^*\in X^*, \|x^*\|\leq 1}[\![M]\!]_t(x^*, x^*) = \sup_{x^*\in X^*, \|x^*\|\leq 1}[\langle M, x^*\rangle, \langle M, x^*\rangle]_t \lesssim_X [M]_t.
 $$ 
 For a general infinite dimensional Banach space the existence of $[\![M]\!]_t$ remains an open problem. In Theorem \ref{thm:BDGgeneralUMD} we show that if $X$ has the UMD property, then existence of $[\![M]\!]_t$ follows automatically; moreover, in this case $ \gamma([\![M]\!]_t) <\infty$ a.s.\ (see Section \ref{sec:Gaussiancharacteristic} and Theorem~\ref{thm:BDGgeneralUMD}), which is way stronger than continuity.
\end{remark}

\section{Burkholder-Davis-Gundy inequalities: the continuous-time case}\label{sec:BDG-cont}

The following theorem is the main theorem of the paper.

\begin{theorem}\label{thm:BDGgeneralUMD}
 Let $X$ be a UMD Banach space. Then for any local martingale $M:\mathbb R_+\times \Omega \to X$ with $M_0=0$ and any $t\geq 0$ the covariation bilinear form $[\![M]\!]_t$ is well-defined and bounded almost surely, and for all $1\leq p<\infty$
 \begin{equation}\label{eq:thmBDGgeneralUMD}
   \mathbb E \sup_{0\leq s\leq t}\|M_s\|^p \eqsim_{p,X} \mathbb E  \gamma([\![M]\!]_t)^p .
 \end{equation}
\end{theorem}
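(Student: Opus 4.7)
The plan is to derive the continuous-time estimate from the discrete-time Theorem \ref{thm:BDGgendiscmart} by three successive approximations: localization to $L^p$-bounded martingales, finite-dimensional reduction on the dual side, and time discretization. First I would localize via the stopping times $\tau_n := \inf\{t : \|M_t\| \geq n\}$ (possibly adjusted to control the jump at $\tau_n$), and use Remark \ref{rem:basicpropsof[[M]]}(ii) together with Lemma \ref{lem:V_nmontoVthenthesameforgamma(V)} to send $n \to \infty$ monotonically on both sides. So one may assume $M$ is $L^p$-bounded.

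Next I handle the finite-dimensional case. If $\dim X < \infty$, then $[\![M]\!]_t$ exists automatically by Remark \ref{rem:existenceof[[M]]givenXisfd}. Fix $t \geq 0$ and take nested partitions $\pi^k = \{0 = t_0^k < \cdots < t_{N_k}^k = t\}$ of mesh tending to zero with $\bigcup_k \pi^k$ dense in $[0,t]$. Writing $d_n^k := M_{t_n^k} - M_{t_{n-1}^k}$, Theorem \ref{thm:BDGgendiscmart} and Lemma \ref{lem:gammaofV=gammaofTfits} give
\[
\mathbb E \sup_{1\leq m\leq N_k}\Bigl\|\sum_{n=1}^m d_n^k\Bigr\|^p \eqsim_{p,X} \mathbb E \gamma(V_t^{(k)})^p,
\]
where $V_t^{(k)}(x^*, y^*) := \sum_n \langle d_n^k, x^*\rangle \langle d_n^k, y^*\rangle$. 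For each fixed $x^* \in X^*$ the scalar quadratic variation gives $V_t^{(k)}(x^*, x^*) \to [\![M]\!]_t(x^*, x^*)$ in probability. Since $X$ is finite-dimensional, Lemma \ref{lem:boundedbessofVbyagoodnorms} promotes pointwise convergence on a finite test set to convergence in the norm $\|\cdot\|$ on bilinear forms, and Proposition \ref{prop:continuity+squreboundednessofgammaidukuryu} then forces $\gamma(V_t^{(k)}) \to \gamma([\![M]\!]_t)$ in probability. A uniform integrability argument, with the discrete BDG bound itself providing the $L^p$-domination, upgrades this to $L^p$-convergence. The left-hand side converges to $\mathbb E \sup_{s \leq t}\|M_s\|^p$ by c\`adl\`ag regularity and denseness of $\bigcup_k \pi^k$, closing the finite-dimensional case.

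For general UMD $X$, take an increasing sequence of finite-dimensional subspaces $Y_m \subset X^*$ with $\overline{\bigcup_m Y_m} = X^*$. The map $x \mapsto \langle x, \cdot\rangle|_{Y_m}$ realizes the image of $M$ as a martingale in the finite-dimensional quotient $Y_m^*$ of $X$ (using reflexivity of $X$), which inherits the UMD property with $\beta_{p, Y_m^*} \leq \beta_{p, X}$. Applying the finite-dimensional case produces bilinear forms $V_t^{(m)}$ on $Y_m \times Y_m$ with $V_t^{(m)}(y^*, z^*) = [\langle M, y^*\rangle, \langle M, z^*\rangle]_t$ and
\[
\mathbb E \sup_{0\leq s\leq t}\, \sup_{\substack{y^* \in Y_m \\ \|y^*\|\leq 1}} |\langle M_s, y^*\rangle|^p \eqsim_{p,X} \mathbb E \gamma(V_t^{(m)})^p,
\]
with constants uniform in $m$ by Remark \ref{rem:deponconstants}. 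The uniform $L^p$-bound on $\gamma(V_t^{(m)})$, combined with the continuity estimate \eqref{eq:contofVduetogammanormtikinula} and the consistency $V_t^{(m+1)}|_{Y_m \times Y_m} = V_t^{(m)}$, allows me to extend the family $(V_t^{(m)})$ to a unique bounded symmetric nonnegative bilinear form $[\![M]\!]_t$ on $X^* \times X^*$ almost surely. Sending $m \to \infty$, the left-hand side increases to $\mathbb E \sup_s \|M_s\|_X^p$ by Hahn-Banach, while Proposition \ref{prop:convergenceofrestrictedV_m} and Lemma \ref{lem:V_nmontoVthenthesameforgamma(V)} give the right-hand side increasing to $\mathbb E \gamma([\![M]\!]_t)^p$.

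The principal obstacle is not any single estimate, but rather the very construction of $[\![M]\!]_t$ as a bounded bilinear form on $X^* \times X^*$ in the non-Hilbertian setting — an existence question open since the 1970's (Meyer, M\'etivier). The crucial leverage supplied by UMD is precisely a \emph{uniform} $L^p$-bound on the Gaussian characteristics of the finite-dimensional restrictions $V_t^{(m)}$, which via \eqref{eq:contofVduetogammanormtikinula} delivers the modulus-of-continuity control needed to pass from a consistent family of finite-dimensional covariations to a continuous extension on the entire dual $X^* \times X^*$.
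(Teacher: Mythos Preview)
Your approach mirrors the paper's almost step for step: the finite-dimensional case via time discretization and Theorem~\ref{thm:BDGgendiscmart}, then the lift to general UMD $X$ through an exhausting sequence $Y_m \uparrow X^*$ of finite-dimensional dual subspaces with constants uniform in $m$ by Remark~\ref{rem:deponconstants}, and finally the construction of $[\![M]\!]_t$ as the continuous extension of the consistent family of finite-dimensional covariations via \eqref{eq:contofVduetogammanormtikinula} and Proposition~\ref{prop:convergenceofrestrictedV_m}.

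The one genuine soft spot is in the finite-dimensional step, where you pass from $\gamma(V_t^{(k)}) \to \gamma([\![M]\!]_t)$ in probability to convergence in $L^p$ by ``uniform integrability, with the discrete BDG bound providing the $L^p$-domination.'' The discrete BDG bound gives only a uniform bound on $\mathbb E\,\gamma(V_t^{(k)})^p$, i.e.\ an $L^1$-bound on the $p$-th powers, and that does \emph{not} by itself yield uniform integrability of $\gamma(V_t^{(k)})^p$. The paper avoids this entirely by proving $\gamma(V_N - [\![M]\!]_t) \to 0$ in $L^p$ directly: in finite dimensions Proposition~\ref{prop:continuity+squreboundednessofgammaidukuryu} and Lemma~\ref{lem:boundedbessofVbyagoodnorms} give $\gamma(V_N - [\![M]\!]_t)^2 \lesssim_X \vertiii{V_N - [\![M]\!]_t}$, so it suffices that $V_N(x_i^*, x_i^*) \to [\langle M, x_i^*\rangle]_t$ in $L^{p/2}$ for finitely many test functionals $x_i^*$, and this is exactly Dol\'eans' theorem combined with scalar BDG on the $L^{p/2}$-convergence of discrete quadratic variations of real-valued $L^p$-martingales. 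Replace your uniform-integrability appeal with this and the argument closes.
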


\begin{proof}[Proof of Theorem \ref{thm:BDGgeneralUMD}]

{\em Step 1: finite dimensional case.}
First note that in this case $[\![M]\!]_t$ exists and bounded a.s.\ due to Remark \ref{rem:existenceof[[M]]givenXisfd}. Fix $1\leq p<\infty$. By mutlidimensional Burkholder-Davis-Gundy inequalities we may assume that both $\mathbb E \sup_{0\leq s\leq t}\|M_s\|^p$ and $\mathbb E  \gamma([\![M]\!]_t)^p$ are finite.
For each $N\geq 1$ fix a partition $0= t_1^N<\ldots < t_{n_N}^N = t$ with the mesh not exceeding $1/N$. For each $\omega\in \Omega$ and $N\geq 1$ define a bilinear form $V_N:X^*\times X^* \to \mathbb R$ as follows:
\begin{equation}\label{eq:dfofV_NforageneralBDGgenBanachspace}
 V_N(x^*, x^*) := \sum_{i=1}^{n_N} \langle M_{t_{i}^N} - M_{t_{i-1}^N}, x^*\rangle^2,\;\;\; x^*\in X^*.
\end{equation}
Note that $(M_{t_{i}^N} - M_{t_{i-1}^N})_{i=1}^{n_N}$ is a martingale difference sequence with respect to the filtration $(\mathcal F_{t_{i}^N})_{i=1}^{n_N}$, so by Theorem \ref{thm:BDGgendiscmart} 
\begin{equation}\label{eq:equivwithV_Nzaosnovu}
 \begin{split}
   \mathbb E \bigl\|\sup_{i=1}^{n_N}M_{t_{i}^N}\bigr\|^p
      &\eqsim_{p, X} \mathbb E \Bigl(\mathbb E_{\gamma}\Bigl\|  \sum_{i=1}^{n_N} \gamma_i( M_{t_{i}^N} - M_{t_{i-1}^N})\Bigr\|^2\Bigr)^{\frac p2}= \mathbb E \gamma(V_N)^p,
 \end{split}
\end{equation}
where $(\gamma_i)_{i=1}^{n_N}$ is a sequence of independent Gaussian standard random variables, and the latter equality holds due to the fact that for any fixed $\omega\in \Omega$ the random variable $ \sum_{i=1}^{n_N} \gamma_i( M_{t_{i}^N} - M_{t_{i-1}^N})(\omega)$ is Gaussian and by \eqref{eq:dfofV_NforageneralBDGgenBanachspace}
\[
 V_N(x^*, x^*) = \mathbb E_{\gamma} \Bigl\langle \sum_{i=1}^{n_N} \gamma_i( M_{t_{i}^N} - M_{t_{i-1}^N})(\omega), x^* \Bigr\rangle^2,\;\;\; x^*\in X^*.
\]
Therefore it is sufficient to show that $\gamma(V_N - [\![M]\!]_t)\to 0$ in $L^p(\Omega)$ as $N\to \infty$. Indeed, if this is the case, then by \eqref{eq:equivwithV_Nzaosnovu} and by Lemma \ref{lem:triangleineqforgammageneral}
\[
 \mathbb E \gamma([\![M]\!]_t)^p = \lim_{N\to\infty} \mathbb E \gamma(V_N)^p \eqsim_{p,X}\lim_{N\to\infty}  \mathbb E \bigl\|\sup_{i=1}^{n_N}M_{t_{i}^N}\bigr\|^p =  \mathbb E\sup_{0\leq s\leq t} \|M_s\|^p,
\]
where the latter holds by the dominated convergence theorem as any martingale has a c\`adl\`ag version (see Subsection \ref{subsec:Bsvm}).
Let us show this convergence.
Note that by Proposition \ref{prop:continuity+squreboundednessofgammaidukuryu} and Lemma \ref{lem:boundedbessofVbyagoodnorms} a.s.\
$$
\gamma(V_N - [\![M]\!]_t)^2 \lesssim_X \|{V_N - [\![M]\!]_t}\| \lesssim_X \vertiii{V_N - [\![M]\!]_t}
$$ 
(where $\vertiii{\cdot}$ is as in \eqref{eq:defofvertiiiforbilineaforms})
Therefore we need to show that $\vertiii{V_N - [\![M]\!]_t} \to 0$ in $L^{\frac p2}(\Omega)$, which follows from the fact that for any $x_i^*$ from Lemma \ref{lem:boundedbessofVbyagoodnorms}, $i=1,\ldots,n$, we have that
\[
 V_N(x_i^*, x_i^*) = \sum_{i=1}^{n_N} \langle M_{t_{i}^N} - M_{t_{i-1}^N}, x^*_i\rangle^2 \to [\langle M, x_i^*\rangle]_t
\]
in $L^{\frac p2}$-sense by \cite[Th\'eor\`eme 2]{Dol69} and \cite[Theorem 5.1]{BDG}.

\smallskip

{\em Step 2: infinite dimensional case.} First assume that $M$ is an $L^p$-bounded martingale. Without loss of generality we can assume $X$ to be separable. Since $X$ is UMD, $X$ is reflexive, so $X^*$ is separable as well. Let $Y_1 \subset Y_2 \subset \ldots\subset Y_n \subset \ldots$ be a family of finite dimensional subspaces of $X^*$ such that $\overline{\cup_n Y_n} = X^*$. For each $n\geq 1$ let $P_n:Y_n \to X^*$ be the inclusion operator. Then $\|P_n^*\|\leq 1$ and $P_n^* M$ is a well-defined $Y_n^*$-valued $L^p$-bounded martingale. By Step 1 this martingale a.s.\ has a covariation bilinear form $[\![P_n^* M]\!]_t$ acting on $Y_n \times Y_n$ and
\begin{equation}\label{eq:unifboundforP_n*nespeshit}
  \mathbb E \gamma([\![P_n^* M]\!]_t)^p \stackrel{(*)}\eqsim_{p, X} \mathbb E \sup_{0\leq s\leq t} \|P_n^* M_s\|^p \leq \mathbb E\sup_{0\leq s\leq t}  \|M_s\|^p,
\end{equation}
where $(*)$ is independent of $n$ due to \cite[Proposition 4.2.17]{HNVW1} and Remark \ref{rem:deponconstants}. Note that a.s.\ $[\![P_n^* M]\!]_t$ and $[\![P_m^* M]\!]_t$ agree for all $m\geq n\geq 1$, i.e.\ a.s.\
\begin{equation}\label{eq:P_nP_magreementpomelocham}
  [\![P_m^* M]\!]_t(x^*,y^*) = [\![P_n^* M]\!]_t(x^*,y^*) = [\langle M, x^*\rangle,\langle M,y^*\rangle]_t,\;\;\; x^*, y^*\in Y_n.
\end{equation}
Let $\Omega_0\subset \Omega$ be a subset of measure 1 such that \eqref{eq:P_nP_magreementpomelocham} holds for all $m\geq n\geq 1$. Fix $\omega \in \Omega_0$. 
Then by \eqref{eq:P_nP_magreementpomelocham} we can define a bilinear form (not necessarily continuous!) $V$ on $Y\times Y$ (where $Y:= \cup_{n}Y_n \subset X^*$) such that $V(x^*,y^*)=[\![P_n^* M]\!]_t(x^*,y^*)$ for all $x^*, y^*\in Y_n$ and $n\geq 1$. 

Let us show that $V$ is continuous (and hence has a continuous extension to $X^* \times X^*$) and $\gamma(V) <\infty$ a.s.\ on $\Omega_0$.
Notice that by Lemma \ref{lem:V_0isonsubsetofVsoitsgammaislettznakomo} the sequence $(\gamma([\![P_n^* M]\!]_t))_{n\geq 1}$ is increasing a.s.\ on $\Omega_0$. Moreover, by the monotone convergence theorem and \eqref{eq:unifboundforP_n*nespeshit} $(\gamma([\![P_n^* M]\!]_t))_{n\geq 1}$ has a limit a.s.\ on $\Omega_0$. Let ${\Omega_1}\subset \Omega_0$ be a subset of full measure such that $(\gamma([\![P_n^* M]\!]_t))_{n\geq 1}$ has a limit on ${\Omega_1}$. Then by \eqref{eq:contofVduetogammanormtikinula} $V$ is continuous on $\Omega_1$ and hence has a continuous extension to $X^* \times X^*$ (which we will denote by $V$ as well for simplicity). Then by Proposition \ref{prop:convergenceofrestrictedV_m} $\gamma(V) = \lim_{n\to \infty}\gamma([\![P_n^* M]\!]_t)$ monotonically on $\Omega_1$ and hence by monotone convergence theorem ans the fact that $\|P_n^* x\| \to \|x\|$ as $n\to \infty$ monotonically for all $x\in X$
\[
 \mathbb E\sup_{0\leq s\leq t}  \|M_s\|^p= \lim_{n\to \infty}  \mathbb E \sup_{0\leq s\leq t} \|P_n^* M_s\|^p\eqsim_{p,X} \lim_{n\to \infty}\mathbb E \gamma([\![P_n^* M]\!]_t)^p  = \mathbb E (\gamma(V))^p. 
\]
It remains to show that $V = [\![M]\!]_t$ a.s., i.e.\ $V(x^*, x^*) = [\langle M, x^*\rangle]_t$ a.s.\ for any $x^*\in X^*$. If $x^* \in Y$, then the desired follows from the construction of $V$. Fix $x^*\in X^*\setminus Y$. Since $Y$ is dense in $X^*$, there exists a Cauchy sequence $(x_n^*)_{n\geq 1}$ in $Y$ converging to $x^*$. Then since $V(x_n^*,x_n^*) = [\langle M, x^*_n\rangle]_t$ a.s.\ for all $n\geq 1$,
\begin{align*}
  \lim_{n\to \infty} |V(x_n^*,x_n^*) - [\langle M, x^*\rangle]_t|^{\frac p2}&\lesssim_p   \lim_{n\to \infty}  [\langle M, x^*-x_n^*\rangle]_t^{\frac p2}\eqsim_p \lim_{n\to \infty} \mathbb E |\langle M, x^*-x_n^*\rangle|^p\\
  &\leq  \lim_{n\to \infty} \mathbb E \| M\|^p\|x^*-x_n^*\|^p = 0,
\end{align*}
so due to a.s.\ continuity of $V$, $V(x^*, x^*)$ and $[\langle M, x^*\rangle]_t$ coincide a.s.

\smallskip

Now let $M$ be a general local martingale. By a stopping time argument we can assume that $M$ is an $L^1$-bounded martingale, and then the existence of $[\![M]\!]_t$ follows from the case $p=1$.

Let us now show \eqref{eq:thmBDGgeneralUMD}. If the left-hand side is finite then $M$ is an $L^p$-bounded martingale and the desired follows from the previous part of the proof. Let the left-hand side be infinite. Then it is sufficient to notice that by Step 1
 \[
  \mathbb E \sup_{0\leq s\leq t}\|P_n^*M_s\|^p \eqsim_{p,X} \mathbb E  \gamma([\![P_n^*M]\!]_t)^p,
 \]
 for any (finite or infinite) left-hand side, and the desired will follow as $n\to \infty$ by the fact that $\|P_n^*M_s\| \to \|M_s\|$ and $\gamma([\![P_n^*M]\!]_t) \to \gamma([\![M]\!]_t)$ monotonically a.s.\ as $n\to\infty$, and the monotone convergence theorem.
\end{proof}

\begin{remark}\label{rem:nesofUMDforgenBDGwithgamma}
Note that $X$ being a UMD Banach space is necessary in Theorem \ref{thm:BDGgeneralUMD} (see Theorem \ref{thm:necofUMDfordiscBDGtango} and \cite{NVW}).
\end{remark}

\begin{remark}
 Because of Lemma \ref{lem:gammaofV=gammaofTfits} the reader may suggest that if $X$ is a UMD Banach space, then for any $X$-valued local martingale $M$, for any $t\geq 0$, and for a.a.\ $\omega \in \Omega$ there exist a natural choice of a Hilbert space $H(\omega)$ and a natural choice of an operator $T(\omega)\in \mathcal L(H(\omega), X)$ such that for all $x^*, y^*\in X^*$ a.s.\
 $$
 [\![M]\!]_t(x^*, y^*) = \langle T^* x^*,T^*y^*\rangle.
 $$
 If this is the case, then by  Lemma \ref{lem:gammaofV=gammaofTfits} and Theorem \ref{thm:BDGgeneralUMD}
 \[
  \mathbb E \sup_{0\leq s\leq t}\|M_s\|^p \eqsim_{p,X} \mathbb E  \|T\|_{\gamma(H, X)}^p.
 \]
Such a natural pair of $H(\omega)$ and $T(\omega)$, $\omega \in \Omega$, is known for purely discontinuous local martingales (see Theorem \ref{thm:BDGUMDpdmartingales}) and for stochastic integrals (see Subsection \ref{subsec:applicItoisomgenmart} and \ref{subsec:ItoisomPrmandgrm}). Unfortunately, it remains open how such $H$ and $T$ should look like for a general local martingale $M$.
\end{remark}

\begin{remark}
 As in Remark \ref{rem:phidisccase}, by a limiting argument shown in the proof of Theorem \ref{thm:BDGgeneralUMD} one can prove that for any UMD Banach space $X$, for any martingale $M:\mathbb R_+ \times \Omega \to X$, and for any convex increasing function $\phi:\mathbb R_+ \to \mathbb R_+$ with $\phi(0)=0$ and with $\phi(2\lambda) \leq c\phi(\lambda)$ for some fixed $c>0$ for any $\lambda>0$ one has that
 \[
  \mathbb E \sup_{0\leq s\leq t}\phi(\|M_s\|) \eqsim_{\phi, X} \mathbb E \phi\bigl( \gamma([\![M]\!]_t)\bigr).
 \]
 To this end, one first needs to prove the finite-dimensional case by using the proof of \cite[Theorem 5.1]{BDG} and the fact that for any convex increasing $\psi:\mathbb R_+ \to \mathbb R_+$ with $\psi(0)=0$ and with $\psi(2\lambda) \leq c\phi(\lambda)$ one has that $\psi\circ \phi$ satisfies the same properties (perhaps with a different constant $c$), and then apply the same extending argument.
 \end{remark}

Let $X$ be a UMD Banach space, $M:\mathbb R_+ \times \Omega \to X$ be a martingale. Then by Theorem \ref{thm:BDGgeneralUMD} there exists a {\em process} $[\![M]\!]:\mathbb R_+\times \Omega \to X \otimes X$ such that for any $x^*, y^*\in X^*$ and a.e.\ $(t,\omega) \in \mathbb R_+ \times \Omega$
\begin{equation}\label{[[M]]asprocess}
  [\![M]\!]_t(x^*, y^*)(\omega) = [\langle M, x^*\rangle,\langle M, y^*\rangle]_t(\omega).
\end{equation}
In our final proposition we show that this process is adapted and has a {\em c\`adl\`ag version} (i.e.\ a version which is right-continuous with left limits).

\begin{proposition}\label{prop:cadlag[[M]]}
 Let $X$ be a UMD Banach space, $M:\mathbb R_+ \times \Omega \to X$ be a local martingale. Then there exists an increasing adapted c\`adl\`ag process $[\![M]\!]:\mathbb R_+\times \Omega \to X \otimes X$ such that \eqref{[[M]]asprocess} holds true. Moreover, if this is the case then $\gamma([\![M]\!])$ is increasing adapted c\`adl\`ag.
\end{proposition}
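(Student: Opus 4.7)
By a standard stopping argument I reduce to the case where $M$ is $L^1$-bounded, and (passing to a separable closed subspace of $X$ containing the range of $M$) assume $X$ is separable, so that $X^*$ is separable by reflexivity. Fix an increasing chain of finite-dimensional subspaces $Y_1 \subset Y_2 \subset \cdots$ of $X^*$ with $\overline{\cup_n Y_n} = X^*$ and denote the inclusions by $P_n:Y_n \hookrightarrow X^*$.

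Since each $P_n^* M$ is a $Y_n^*$-valued local martingale, Remark~\ref{rem:existenceof[[M]]givenXisfd} together with classical finite-dimensional theory gives $[\![P_n^* M]\!]$ as an increasing c\`adl\`ag adapted bilinear-form-valued process, and by definition of $[\![\cdot]\!]$ via the real quadratic covariations $[\langle M,x^*\rangle,\langle M,y^*\rangle]$ we have the compatibility $[\![P_m^* M]\!]_t|_{Y_n \times Y_n} = [\![P_n^* M]\!]_t$ a.s.\ for all $m \geq n$ and $t\geq 0$. Collecting the countably many null sets on which either a path fails to be c\`adl\`ag or a compatibility fails, we obtain a full-measure event $\Omega_0$ on which we may unambiguously define a nondecreasing family of bilinear forms $V_t(\omega)$ on the dense subspace $Y := \cup_n Y_n \subset X^*$ by $V_t|_{Y_n \times Y_n} := [\![P_n^* M]\!]_t$. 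Applying Theorem~\ref{thm:BDGgeneralUMD} at each rational time and intersecting further null sets, we may assume that on a full-measure subset $\Omega_1 \subset \Omega_0$, for every $t \in \mathbb Q_+$ the form $V_t$ extends continuously to a nonnegative symmetric bilinear form on $X^* \times X^*$ with $\gamma(V_t) < \infty$.

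The crux is to extend $t \mapsto V_t$ from $\mathbb Q_+$ to all $t \geq 0$ in the bilinear-form norm $\|\cdot\|$. Fix $\omega \in \Omega_1$, $t \geq 0$ and rationals $q_n \searrow t$. For each $x^* \in Y$ the c\`adl\`ag real-valued process $[\langle M, x^*\rangle]$ gives $V_{q_n}(x^*, x^*) \searrow W_t(x^*, x^*)$ for some nonnegative symmetric bilinear form $W_t$ on $Y$ dominated by $V_{q_1}$; hence $\gamma(W_t) \leq \gamma(V_{q_1}) < \infty$ by Lemma~\ref{lem:VgeqWthenVgammageqWgamma}, so $W_t$ extends continuously to $X^* \times X^*$. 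Applying Lemma~\ref{lem:V_nmonto0thenthesamefor0} to the decreasing sequence $V_{q_n} - W_t$ (which tends to zero pointwise on the dense set $Y$) yields $\gamma(V_{q_n} - W_t) \to 0$, and the elementary bound $\|U\| \leq \gamma(U)^2$ valid for any nonnegative $U$ (which follows from $U(x^*,x^*) = \mathbb E\langle \xi, x^*\rangle^2 \leq \|x^*\|^2\, \mathbb E\|\xi\|^2$ for an $X$-valued Gaussian $\xi$ with covariance $U$) then promotes this to $\|V_{q_n} - W_t\| \to 0$. Setting $[\![M]\!]_t := W_t$ and running the analogous monotone argument on rationals $q_n \nearrow t$ produces left limits $[\![M]\!]_{t-}$ in the bilinear-form norm, so $t \mapsto [\![M]\!]_t$ is c\`adl\`ag. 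Monotonicity in $t$ is built in; adaptedness follows because each $V_q$ is $\mathcal F_q$-measurable and $\mathbb F$ is right-continuous. Relation~\eqref{[[M]]asprocess} holds on $Y \times Y$ at rational times by construction, extends to all $t \geq 0$ via c\`adl\`ag of both sides, and to all $x^*, y^* \in X^*$ by density together with the continuity of $[\![M]\!]_t$.

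The ``moreover'' part is then easy: $\gamma([\![M]\!])$ is increasing by Lemma~\ref{lem:VgeqWthenVgammageqWgamma}; it is adapted because Proposition~\ref{prop:convergenceofrestrictedV_m} identifies $\gamma([\![M]\!]_t) = \lim_n \gamma([\![M]\!]_t|_{Y_n \times Y_n})$ as a countable supremum of $\mathcal F_t$-measurable quantities; and it is c\`adl\`ag because the inequality $|\gamma(V) - \gamma(W)| \leq \gamma(V - W)$ for comparable nonnegative forms (a direct consequence of Lemma~\ref{lem:||V+W||gammaleq||V||gamma+||W||gamma}) combined with Lemma~\ref{lem:V_nmonto0thenthesamefor0} gives $\gamma([\![M]\!]_s) \to \gamma([\![M]\!]_t)$ as $s \searrow t$, with an analogous monotone argument for left limits. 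The main obstacle throughout is the passage from the a.s.\ existence of $[\![M]\!]_t$ at each fixed $t$ to a pathwise c\`adl\`ag version in the bilinear-form norm; this is unlocked by the monotonicity of $[\![M]\!]$ together with the bound $\|U\| \leq \gamma(U)^2$ for nonnegative $U$, which converts norm convergence of the differences $V_{q_n} - W_t$ into Gaussian-characteristic convergence supplied by Lemma~\ref{lem:V_nmonto0thenthesamefor0}.
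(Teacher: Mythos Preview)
Your proof is correct and follows essentially the same route as the paper's. Both arguments localize, pass to the finite-dimensional projections $P_n^*M$ whose covariation processes are c\`adl\`ag by classical theory, glue these into a bilinear form on the dense set $Y=\cup_n Y_n$, and then use Lemma~\ref{lem:V_nmonto0thenthesamefor0} together with the bound $\|U\|\le\gamma(U)^2$ (which is exactly what \eqref{eq:contofVduetogammanormtikinula} gives with $y^*=0$) to upgrade pointwise monotone convergence to norm convergence, thereby producing the c\`adl\`ag version. The only difference is organizational: the paper defines $F_t$ directly for all $t\ge0$ from the already c\`adl\`ag finite-dimensional processes and then verifies right-continuity and left limits, whereas you first restrict to rational times and then define $[\![M]\!]_t$ as a right limit along rationals---but since the finite-dimensional processes are themselves right-continuous, your $W_t$ coincides with the paper's $F_t$, and the verifications are the same. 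Your treatment of the ``moreover'' part is also in line with the paper's, and in fact spells out the adaptedness (via Proposition~\ref{prop:convergenceofrestrictedV_m}) and the c\`adl\`ag property of $\gamma([\![M]\!])$ (via the triangle inequality $|\gamma(V)-\gamma(W)|\le\gamma(V-W)$ and Lemma~\ref{lem:V_nmonto0thenthesamefor0}) a bit more explicitly than the paper does.
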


\begin{proof}
 Existence of such a process follows from the considerations above. Let us show that this process has an increasing, adapted, and c\`adl\`ag version. First of all by a stopping time argument assume that $M$ is a martingale (so $\mathbb E \gamma([\![M]\!]_{\infty}) < \infty$ and hence $\gamma([\![M]\!]_{\infty}) < \infty$ a.s.) and that there exists $T>0$ such that $M_t = M_T$ for any $t\geq T$. Let $(Y_n)_{n\geq 1}$ and $(P_n)_{n\geq 1}$ be as in the proof of Theorem \ref{thm:BDGgeneralUMD}. Then $P_nM$ takes values in a finite dimensional space $Y_n^*$ and hence $[\![P_nM]\!]$ has increasing, adapted, and c\`adl\`ag version. Therefore we can fix $\Omega_0\subset\Omega$ of full measure which is an intersection of the following sets:
 \begin{enumerate}[(1)]
  \item $[\![P_nM]\!]$ is increasing c\`adl\`ag for any $n\geq 1$,
  \item $[\![M]\!]_T(x^*, y^*) = [\![P_nM]\!]_T(x^*, y^*)$ for any $x^*, y^*\in Y_n$ and for any $n\geq 1$,
  \item $[\![P_mM]\!]_r(x^*, y^*) = [\![P_nM]\!]_r(x^*, y^*)$ for any $r\in \mathbb Q$, for any $x^*, y^*\in Y_{m \wedge n}$, and for any $m, n\geq 1$,
  \item $\gamma([\![M]\!]_T) = \gamma([\![M]\!]_{\infty}) <\infty$.
 \end{enumerate}
 First notice that since all $[\![P_nM]\!]$, $n\geq 1$, are increasing c\`adl\`ag on $\Omega_0$, for any $t\geq 0$ (not necessarily rational) we have that
 \[
  [\![P_mM]\!]_t(x^*, y^*) = [\![P_nM]\!]_t(x^*, y^*),\;\;\;x^*, y^* \in Y_{m \wedge n},\;\;m, n\geq 1.
 \]
Let $F:\mathbb R_+ \times \Omega \to X\otimes X$ be a bilinear form-valued process such that 
\begin{equation}\label{eq:defofFforcadlag}
 F_t(x^*, y^*) = [\![P_nM]\!]_t(x^*, y^*),\;\;\; x^*, y^*\in Y_n,\;\; t\geq 0,
\end{equation}
for any $n\geq 1$, which existence can be shown analogously proof of Theorem \ref{thm:BDGgeneralUMD}.

First note that $F$ is adapted by the definition. Let us show that $F$ is increasing c\`adl\`ag on $\Omega_0$. Fix $\omega \in \Omega_0$. Then $F_t(x^*, x^*) \geq F_s(x^*, x^*)$ for any $t\geq s\geq 0$ and any $x^* \in Y := \cup_{n}Y_n \subset X^*$, and thus we have the same for any $x^*\in X^*$ by continuity of $F_t$ and $F_s$ and the fact that $Y$ is dense in $X^*$.

Now let us show that $F$ is right-continuous. By \eqref{eq:defofFforcadlag} and the fact that $[\![P_nM]\!]$ is c\`adl\`ag we have that 
$$
(F_{t+\eps}-F_t)(x^*, x^*) \to 0,\;\;\; \eps\to 0,\;\; x^*\in Y,
$$
so by Lemma \ref{lem:V_nmonto0thenthesamefor0} and the fact that $\gamma(F_T) = \gamma([\![M]\!]_T) <\infty$ we have that $\gamma(F_{t+\eps}-F_t) \to 0$ as $\eps \to 0$, and thus the desired right continuity follows from \eqref{eq:contofVduetogammanormtikinula}.

Finally, $F$ has left-hand limits. Indeed, fix $t> 0$ and let $F_{t-}$ be a bilinear form defined by
\[
 F_{t-}(x^*, y^*) := \lim_{\eps\to 0}  F_{t-}(x^*, y^*),\;\;\; x^*, y^*\in X^*.
\]
Then $\|F_{t-}-F_{t-{\eps}}\| \to 0$ as $\eps \to 0$ by Lemma \ref{lem:V_nmonto0thenthesamefor0}, \eqref{eq:contofVduetogammanormtikinula},  and the fact that $\gamma(F_T) = \gamma([\![M]\!]_T) <\infty$, so $F$ has left-hand limits.

Now we need to conclude with the fact that $F$ is a version of $[\![M]\!]$, which follows from the fact that by \eqref{eq:defofFforcadlag} for any fixed $t\geq 0$ a.s.
\[
 F_t(x^*, y^*) = [\![M]\!]_t(x^*, y^*),\;\;\; x^*, y^*\in Y,
\]
so by a.e.\ continuity of $F_t$ and $ [\![M]\!]_t$ on $X^*\times X^*$ we have the same for all $x^*, y^*\in X^*$, and thus $F_t = [\![M]\!]_t$ a.s.

The process $\gamma(F)$ is finite a.s.\ by the fact that $\gamma(F_T)<\infty$ a.s., increasing a.s.\ by the fact that $F$ is increasing a.s.\ and by Lemma \ref{lem:VgeqWthenVgammageqWgamma}, and adapted and c\`adl\`ag as $F$ is adapted and c\`adl\`ag and by the fact that the map $V \mapsto \gamma(V)$ is continuous by\eqref{eq:contofVduetogammanormtikinula}. 
\end{proof}

\section{Ramifications of Theorem \ref{thm:BDGgeneralUMD}}\label{sec:ramifications}

Let us outline some ramifications of Theorem \ref{thm:BDGgeneralUMD}.

\subsection{Continuous and purely discontinuous martingales}\label{subsec:contandpdmart}

In the following theorems we will consider separately the cases of continuous and purely discontinuous martingales. Recall that an $X$-valued martingale is called {\em purely discontinuous} if $[\langle M, x^*\rangle]$ is a.s.\ a pure jump process for any $x^*\in X^*$ (see \cite{Kal,JS,Y17FourUMD,Y17MartDec} for details). 

First we show that if $M$ is continuous, then Theorem \ref{thm:BDGgeneralUMD} holds for the whole range $0<p<\infty$.

\begin{theorem}\label{thm:0<p<inftyforcontmart}
 Let $X$ be a UMD Banach space, $M:\mathbb R_+ \times \Omega \to X$ be a continuous local martingale. Then we have that for any $0<p<\infty$
 \begin{equation}\label{eq:BDGgenmartcontmar0<p<infty}
  \mathbb E \sup_{0\leq s\leq t}\|M_s\|^p \eqsim_{p,X} \mathbb E  \gamma([\![M]\!]_t)^p,\;\;\; t\geq 0.
 \end{equation}
\end{theorem}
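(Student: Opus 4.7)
The range $1\le p<\infty$ is already covered by Theorem \ref{thm:BDGgeneralUMD}, so the new content lies in the range $0<p<1$. My plan is to bootstrap from the $p=1$ case via Lenglart's domination inequality: if $Z$ is a nonnegative adapted c\`adl\`ag process and $A$ is a nondecreasing adapted c\`adl\`ag process with $A_0=0$ satisfying $\mathbb E Z_\tau\le \mathbb E A_\tau$ for every bounded stopping time $\tau$, then $\mathbb E(\sup_s Z_s)^p\lesssim_p \mathbb E A_\infty^p$ for every $p\in(0,1)$. By the continuity of $M$ together with Remark \ref{rem:basicpropsof[[M]]}(iii), the process $[\![M]\!]$ has no jumps, so via Proposition \ref{prop:cadlag[[M]]} both $\gamma([\![M]\!])$ and $s\mapsto \sup_{u\le s}\|M_u\|$ are continuous, nondecreasing, adapted processes starting at $0$. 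A standard localization reduces to the case where $M$ is a martingale.

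For the upper bound I would apply Lenglart with $Z_s=\|M_s\|$ and $A_s=C_X\gamma([\![M]\!]_s)$, where $C_X$ is the constant from the $p=1$ case of Theorem \ref{thm:BDGgeneralUMD}. The required domination $\mathbb E\|M_\tau\|\le \mathbb E A_\tau$ for bounded $\tau$ follows by applying that $p=1$ inequality to the stopped martingale $M^\tau$ and invoking $[\![M^\tau]\!]=[\![M]\!]^\tau$ from Remark \ref{rem:basicpropsof[[M]]}(ii). After first stopping $M$ at time $t$, Lenglart then yields $\mathbb E\sup_{s\le t}\|M_s\|^p\lesssim_{p,X}\mathbb E \gamma([\![M]\!]_t)^p$.

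For the reverse bound I would swap the roles and take $Z_s=\gamma([\![M]\!]_s)$ and $A_s=C'_X\sup_{u\le s}\|M_u\|$; continuity of $M$ makes $A$ an admissible (continuous, nondecreasing, adapted) dominating process. The domination $\mathbb E\gamma([\![M]\!]_\tau)\le \mathbb E A_\tau$ again comes from the $p=1$ case of Theorem \ref{thm:BDGgeneralUMD} applied to $M^\tau$, and since $\gamma([\![M]\!])$ is itself nondecreasing one has $\sup_{s\le t}\gamma([\![M]\!]_s)=\gamma([\![M]\!]_t)$, so Lenglart applied to $M^t$ produces $\mathbb E\gamma([\![M]\!]_t)^p\lesssim_{p,X}\mathbb E\sup_{s\le t}\|M_s\|^p$. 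The argument is essentially routine once the $p=1$ case is in hand; the only points deserving care are the transport of Theorem \ref{thm:BDGgeneralUMD} to bounded stopping times via $[\![M^\tau]\!]=[\![M]\!]^\tau$ and the use of continuity of $M$ in the reverse direction to guarantee that the dominating process $A$ fits Lenglart's hypotheses.
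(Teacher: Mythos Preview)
Your proposal is correct and follows essentially the same route as the paper: both bootstrap from the case $p\ge 1$ to $0<p<1$ via Lenglart's domination inequality, using that $\gamma([\![M]\!])$ and $\sup_{u\le \cdot}\|M_u\|$ are continuous adapted nondecreasing (hence predictable) when $M$ is continuous. One small imprecision: your stated form of Lenglart's inequality omits the predictability (or continuity) hypothesis on $A$, which is essential for the clean bound $\mathbb E(\sup_s Z_s)^p\lesssim_p \mathbb E A_\infty^p$; you do verify continuity in each application, so the argument goes through, but the lemma should be quoted with that hypothesis included.
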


For the proof we will need the following technical lemma, which extends Proposition \ref{prop:cadlag[[M]]} in the case of a continuous martingale.

\begin{lemma}\label{lem:gamma([[M]])iscontgivenMisso}
 Let $X$ be a UMD Banach space, $M:\mathbb R_+ \times \Omega \to X$ be a continuous local martingale. Then the processes $[\![M]\!]$ and $\gamma([\![M]\!])$ have continuous versions.
\end{lemma}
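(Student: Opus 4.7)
By Proposition \ref{prop:cadlag[[M]]} both $[\![M]\!]$ and $\gamma([\![M]\!])$ already admit increasing adapted c\`adl\`ag versions, so the only task is to exclude jumps in these versions a.s. Since $M$ has continuous paths, it takes values in a separable subspace of $X$ a.s., so I may assume $X$ is separable; as $X$ is UMD it is reflexive, and hence $X^*$ is separable. A standard localization with respect to $\tau_n := \inf\{t : \gamma([\![M]\!]_t) > n\} \wedge n$ reduces the question to the case where $M$ is an $L^2$-bounded martingale with $\gamma([\![M]\!]_\infty) < \infty$ a.s.

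\emph{Continuity of $[\![M]\!]$.} For each fixed $x^* \in X^*$ the real-valued local martingale $\langle M, x^* \rangle$ is continuous, so its quadratic variation $t \mapsto [\![M]\!]_t(x^*,x^*) = [\langle M, x^*\rangle]_t$ is continuous a.s. Fix a countable dense set $\{x_n^*\}_{n \ge 1} \subset X^*$ and let $\Omega_0$ be the full-measure event on which (i) $[\![M]\!]$ is the nonnegative nondecreasing c\`adl\`ag bilinear-form-valued path from Proposition \ref{prop:cadlag[[M]]} and (ii) $t \mapsto [\![M]\!]_t(x_n^*,x_n^*)$ is continuous for every $n$. On $\Omega_0$, at any potential jump time $\tau$ the increment $\Delta [\![M]\!]_\tau$ is itself a nonnegative symmetric bilinear form, so its operator norm equals $\sup_{\|x^*\| \le 1} \Delta [\![M]\!]_\tau(x^*,x^*)$. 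If this supremum were positive, then by continuity of $x^* \mapsto \Delta [\![M]\!]_\tau(x^*, x^*)$ and density of $\{x_n^*\}$, some $x_n^*$ would detect a strictly positive jump of $t \mapsto [\![M]\!]_t(x_n^*,x_n^*)$, contradicting (ii). Hence $[\![M]\!]$ has no jumps on $\Omega_0$ and is continuous.

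\emph{Continuity of $\gamma([\![M]\!])$.} Since this process is nondecreasing and right-continuous, it suffices to prove left-continuity. Fix $t > 0$ and $s_k \nearrow t$, and set $V_k := [\![M]\!]_t - [\![M]\!]_{s_k}$. By the monotonicity of $[\![M]\!]$ each $V_k$ is a nonnegative symmetric bilinear form, and by the previous step $V_k(x^*,x^*) \searrow 0$ monotonically for every $x^* \in X^*$. By Lemma \ref{lem:VgeqWthenVgammageqWgamma} we have $\gamma(V_1) \le \gamma([\![M]\!]_t) < \infty$, so Lemma \ref{lem:V_nmonto0thenthesamefor0} (applied to any countable subsequence $s_k$) yields $\gamma(V_k) \to 0$. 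The triangle inequality (Lemma \ref{lem:||V+W||gammaleq||V||gamma+||W||gamma}) then gives
\[
 0 \le \gamma([\![M]\!]_t) - \gamma([\![M]\!]_{s_k}) \le \gamma(V_k) \to 0,
\]
which establishes left-continuity and completes the argument.

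The only real subtlety is bookkeeping: one must make a single full-measure choice $\Omega_0$ that simultaneously handles continuity of all countably many coordinate processes and the c\`adl\`ag regularity of $[\![M]\!]$. Once that is fixed, the content of the argument is simply that a nontrivial jump of an increasing nonnegative-bilinear-form-valued path cannot be invisible to a countable dense family of evaluation functionals, and that finiteness of $\gamma([\![M]\!]_t)$ lets one transfer monotone pointwise convergence of bilinear forms into convergence of their Gaussian characteristics via Lemma \ref{lem:V_nmonto0thenthesamefor0}.
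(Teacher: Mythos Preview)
Your argument is correct and arrives at the conclusion by a route that is close in spirit to, but not identical with, the paper's. The paper simply reruns the construction in the proof of Proposition~\ref{prop:cadlag[[M]]}: since $P_n^*M$ is a continuous finite-dimensional martingale, $[\![P_nM]\!]$ is continuous (not merely c\`adl\`ag) for every $n$, and the very same estimates that produced right-continuity and left limits in that proof now yield left-continuity as well. You instead take the c\`adl\`ag version from Proposition~\ref{prop:cadlag[[M]]} as a black box and show it has no jumps by testing against a countable dense family of functionals. Your approach is a bit more modular (no need to reopen the proof of Proposition~\ref{prop:cadlag[[M]]}); the paper's is shorter because it piggybacks on work already done.

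Two small remarks. First, your localization sentence overclaims: stopping at $\tau_n=\inf\{t:\gamma([\![M]\!]_t)>n\}\wedge n$ does not obviously make $M$ $L^2$-bounded, and you never use $L^2$-boundedness anyway. All you need is $\gamma([\![M]\!]_t)<\infty$ a.s.\ for each $t$, which already comes with the c\`adl\`ag version from Proposition~\ref{prop:cadlag[[M]]}; the reduction can be stated that way. Second, for step~(ii) you should take your countable dense set inside $Y=\bigcup_n Y_n$ (the subspace used in Proposition~\ref{prop:cadlag[[M]]}), so that the identity $[\![M]\!]_t(x^*,x^*)=[\langle M,x^*\rangle]_t$ holds for all $t$ simultaneously on the same full-measure set by construction; for a general $x^*\in X^*$ that identity is only asserted for fixed $t$. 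Since $Y$ is dense in $X^*$, this costs nothing and makes the bookkeeping airtight.
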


\begin{proof}
The proof is entirely the same as the proof of Proposition \ref{prop:cadlag[[M]]}, one needs only to use the fact that by \cite[Theorem 26.6(iv)]{Kal} we can assume that $[\![P_nM]\!]$ is increasing {\em continuous} for any $n\geq 1$ on $\Omega_0$, so both $[\![M]\!]$ and $\gamma([\![M]\!])$ will not just have left-hand limits, but be left continuous, and thus continuous (as these processes are already right continuous).
\end{proof}

\begin{proof}[Proof of Theorem \ref{thm:0<p<inftyforcontmart}] The case $p\geq 1$ follows from Theorem \ref{thm:BDGgeneralUMD}. Let us treat the case $0<p<1$. First we show that $(\gamma([\![M]\!]_t))_{t\geq 0}$ is a predictable process: $(\gamma([\![M]\!]_t))_{t\geq 0}$ is a monotone limit of  processes $(\gamma([\![P_n^*M]\!]_t))_{t\geq 0}$ (where $P_n$'s are as in the proof of Theorem \ref{thm:BDGgeneralUMD}), which are predictable due to the fact that $([\![P_n^* M]\!]_t)_{t\geq 0}$ is a $Y_n^*\otimes Y_n^*$-valued predictable process and $\gamma:Y_n^*\otimes Y_n^* \to \mathbb R_+$ is a fixed measurable function. Moreover, by Lemma \ref{lem:gamma([[M]])iscontgivenMisso} $(\gamma([\![M]\!]_t))_{t\geq 0}$ is continuous a.s., and by Remark \ref{rem:basicpropsof[[M]]} and Lemma \ref{lem:VgeqWthenVgammageqWgamma} $(\gamma([\![M]\!]_t))_{t\geq 0}$ is increasing a.s.

Now since $(\gamma([\![M]\!]_t))_{t\geq 0}$ is continuous predictable increasing, \eqref{eq:BDGgenmartcontmar0<p<infty} follows from the case $p\geq 1$ and Lenglart's inequality (see \cite{Lenglart} and \cite[Proposition IV.4.7]{RY}).
\end{proof}

\begin{theorem}\label{thm:M^nto0iff[[M^n]]tozeroforcontmart}
 Let $X$ be a UMD Banach space, $(M^n)_{n\geq 1}$ be a sequence of $X$-valued continuous local martingales such that $M^n_0=0$ for all $n\geq 1$. Then $\sup_{t\geq 0} \|M^n_t\| \to 0$ in probability as $n\to \infty$ if and only if $\gamma([\![M^n]\!]_{\infty}) \to 0$ in probability as $n\to \infty$.
\end{theorem}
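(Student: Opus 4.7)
The plan is to derive the theorem from the $p=1$ case of Theorem \ref{thm:0<p<inftyforcontmart} via Lenglart's inequality applied to stopped martingales. Two preliminary observations: by Lemma \ref{lem:gamma([[M]])iscontgivenMisso}, for each $n$ the process $t\mapsto \gamma([\![M^n]\!]_t)$ has an increasing continuous (hence predictable) version vanishing at $0$; and the process $t\mapsto \sup_{s\leq t}\|M^n_s\|$ is continuous, increasing, adapted (hence predictable) and vanishes at $0$. Moreover for any bounded stopping time $\tau$, the stopped process $M^{n,\tau}$ is again a continuous local martingale with $[\![M^{n,\tau}]\!] = [\![M^n]\!]^{\tau}$ by Remark \ref{rem:basicpropsof[[M]]}(ii), so applying Theorem \ref{thm:0<p<inftyforcontmart} with $p=1$ gives
\[
 c_X\, \mathbb E \gamma([\![M^n]\!]_\tau) \leq \mathbb E \sup_{t\leq \tau}\|M^n_t\| \leq C_X\, \mathbb E \gamma([\![M^n]\!]_\tau)
\]
with constants $c_X, C_X>0$ depending only on $X$.

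For the implication $\gamma([\![M^n]\!]_\infty)\to 0\Rightarrow \sup_t\|M^n_t\|\to 0$ in probability, I would apply Lenglart's inequality (see \cite[Proposition IV.4.7]{RY}) with the adapted cadlag process $Y_t:=\sup_{s\leq t}\|M^n_s\|$ dominated, in the Lenglart sense, by the predictable increasing process $A_t:=C_X\gamma([\![M^n]\!]_t)$. This yields, for every $a,b>0$,
\[
 \mathbb P\Bigl(\sup_{t\geq 0}\|M^n_t\|\geq a\Bigr) \leq \frac{\mathbb E\bigl(C_X\gamma([\![M^n]\!]_\infty)\wedge b\bigr)}{a} + \mathbb P\bigl(C_X\gamma([\![M^n]\!]_\infty)>b\bigr).
\]
Given $\eps,\delta>0$, choose $b$ so small that $b/a<\delta/2$; then the first term on the right tends to $0$ as $n\to\infty$ by dominated convergence (since $\gamma([\![M^n]\!]_\infty)\wedge(b/C_X)\to 0$ in probability and is bounded), and the second term tends to $0$ by assumption, giving the desired conclusion.

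For the reverse implication $\sup_t\|M^n_t\|\to 0\Rightarrow \gamma([\![M^n]\!]_\infty)\to 0$ in probability, I would apply Lenglart's inequality in the opposite direction: take $Y_t:=\gamma([\![M^n]\!]_t)$ (continuous adapted, vanishing at $0$) dominated by the predictable increasing process $A_t:=c_X^{-1}\sup_{s\leq t}\|M^n_s\|$. The dominance $\mathbb E Y_\tau\leq \mathbb E A_\tau$ for bounded stopping times follows from the lower bound in the displayed BDG equivalence above. Lenglart's inequality then yields, for all $a,b>0$,
\[
 \mathbb P\bigl(\gamma([\![M^n]\!]_\infty)\geq a\bigr) \leq \frac{\mathbb E\bigl(c_X^{-1}\sup_{t}\|M^n_t\|\wedge b\bigr)}{a} + \mathbb P\bigl(c_X^{-1}\sup_{t}\|M^n_t\|>b\bigr),
\]
and exactly the same argument as before gives the conclusion.

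There is no substantive obstacle: the only subtlety is verifying the predictability of both processes (handled by continuity) and that the inequality passes to $\tau=\infty$, which follows by monotone convergence applied to the bounded stopping times $\tau_k=k$ together with the fact that both $\sup_t\|M^n_t\|$ and $\gamma([\![M^n]\!]_\infty)$ are the monotone limits of their values at $\tau_k$.
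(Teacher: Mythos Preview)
Your proof is correct and follows essentially the same Lenglart-type approach as the paper. The paper unpacks the argument by hand---defining a hitting-time stopping time, splitting the probability, and applying Chebyshev together with the BDG inequality on the stopped martingale---whereas you invoke Lenglart's inequality from \cite{RY} as a black box after verifying Lenglart domination via Theorem~\ref{thm:0<p<inftyforcontmart}; but this is precisely the content of the Lenglart argument, so the two are the same in substance.
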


\begin{proof}
The proof follows from the classical argument due do Lenglart (see \cite{Lenglart}), but we will recall this argument for the convenience of the reader.
 We will show only one direction, the other direction follows analogously. Fix $\eps, \delta>0$. For each $n\geq 1$ define a stopping time $\tau_n$ in the following way:
 \[
  \tau_n:= \inf\{t\geq 0: M^n_t > \eps\}.
 \]
Then by \eqref{eq:thmBDGgeneralUMD} and Chebyshev's inequality
\begin{align*}
 \mathbb P(\gamma([\![M^n]\!]_{\infty}) > \delta) &\leq \mathbb P(\tau_n < \infty) + \mathbb P(\gamma([\![M^n]\!]_{\tau_n}) > \delta)\\
 & \leq \mathbb P (\sup_{t\geq 0} \|M^n_t\| > \eps) + \delta^{-\frac 12} \mathbb E \gamma([\![M^n]\!]_{\tau_n})^{\frac 12}\\
 & \lesssim_X \mathbb P (\sup_{t\geq 0} \|M^n_t\| > \eps) + \delta^{-\frac 12} \mathbb E \|M^n_{\tau_n}\|\\
 &\leq\mathbb P (\sup_{t\geq 0} \|M^n_t\| > \eps) +  \delta^{-\frac 12}  \eps, 
\end{align*}
and the latter vanishes for any fixed $\delta>0$ as $\eps \to 0$ and $n\to \infty$.
\end{proof}

\begin{remark}
Note that Theorem \ref{thm:M^nto0iff[[M^n]]tozeroforcontmart} does not hold for general martingales even in the real-valued case, see \cite[Exercise 26.5]{Kal}.
\end{remark}

For the next theorem recall that $\ell^2([0, t])$ is the {\em nonseparable} Hilbert space consisting of all functions $f:[0,t] \to \mathbb R$ which support $\{s\in [0,t]:f(s)\neq 0\}$ is countable and $\|f\|_{\ell^2([0, t])} := \sum_{0\leq s\leq t} |f(s)|^2 <\infty$.

\begin{theorem}\label{thm:BDGUMDpdmartingales}
 Let $X$ be a UMD Banach space, $1\leq p<\infty$, $M:\mathbb R_+ \times \Omega \to X$ be a purely discontinuous martingale. Then for any $t\geq 0$
 \[
  \mathbb E \sup_{0\leq s\leq t} \|M_s\|^p \eqsim_{p, X} \mathbb E \|(\Delta M_s)_{0\leq s\leq t}\|_{\gamma(\ell^2([0, t]), X)}^p.
 \]
\end{theorem}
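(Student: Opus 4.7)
The plan is to reduce the statement to Theorem \ref{thm:BDGgeneralUMD} by identifying the Gaussian characteristic of $[\![M]\!]_t$ with the prescribed $\gamma$-norm of the jump sequence. By Theorem \ref{thm:BDGgeneralUMD} we already have $\mathbb E \sup_{0\leq s\leq t}\|M_s\|^p \eqsim_{p,X} \mathbb E \gamma([\![M]\!]_t)^p$, so everything reduces to showing the pointwise identity
\[
 \gamma([\![M]\!]_t)(\omega) = \|(\Delta M_s(\omega))_{0\leq s\leq t}\|_{\gamma(\ell^2([0,t]),X)}\quad \text{a.s.}
\]

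First, I would unpack the hypothesis that $M$ is purely discontinuous. By definition, for every $x^*\in X^*$ the real-valued quadratic variation $[\langle M,x^*\rangle]$ is a pure-jump process, hence
\[
 [\langle M,x^*\rangle]_t = \sum_{0<s\leq t}|\langle \Delta M_s,x^*\rangle|^2 \quad \text{a.s.}
\]
Combining this with the polarization identity and Remark \ref{rem:basicpropsof[[M]]} (or directly with the definition of $[\![M]\!]_t$), I get
\[
 [\![M]\!]_t(x^*,y^*) = \sum_{0<s\leq t}\langle \Delta M_s,x^*\rangle\langle \Delta M_s,y^*\rangle \quad \text{a.s.}
\]

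Next, I would introduce the operator $T(\omega):\ell^2([0,t])\to X$ encoding the jumps by $Tf := \sum_{s\in J(\omega)} f(s)\Delta M_s(\omega)$, where $J(\omega) = \{s\in[0,t] : \Delta M_s(\omega)\neq 0\}$ is at most countable by c\`adl\`ag-ness of $M$. The fact that $\gamma([\![M]\!]_t)$ is finite a.s.\ (granted by Theorem \ref{thm:BDGgeneralUMD} after a standard localization reducing to the $L^1$-bounded case) entails in particular that $(\langle \Delta M_s,x^*\rangle)_{s\in J}\in \ell^2(J)$ for every $x^*\in X^*$, so that $T$ is well-defined, bounded, and its adjoint satisfies $T^*x^* = (\langle \Delta M_s,x^*\rangle)_{s\in[0,t]}$ with
\[
 \langle T^*x^*,T^*y^*\rangle_{\ell^2([0,t])} = \sum_{s\in J}\langle \Delta M_s,x^*\rangle\langle \Delta M_s,y^*\rangle = [\![M]\!]_t(x^*,y^*).
\]
Restricting $T$ to the separable subspace $\ell^2(J)\subset \ell^2([0,t])$ (which does not change the $\gamma$-norm, since $T$ vanishes on the orthogonal complement of $\ell^2(J)$), Lemma \ref{lem:gammaofV=gammaofTfits} yields
\[
 \gamma([\![M]\!]_t) = \|T\|_{\gamma(\ell^2([0,t]),X)} = \|(\Delta M_s)_{0\leq s\leq t}\|_{\gamma(\ell^2([0,t]),X)}.
\]
Plugging this into Theorem \ref{thm:BDGgeneralUMD} finishes the proof.

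The only real subtlety is that $\ell^2([0,t])$ is nonseparable while Lemma \ref{lem:gammaofV=gammaofTfits} is formulated for separable Hilbert spaces; this is harmless because all relevant operators factor through the separable subspace indexed by the (at most countable) jump set. A secondary technical point is the reduction from ``martingale'' to a setting where $[\![M]\!]_t$ is a.s.\ finite, which is handled by the standard localization argument already used in the proof of Theorem \ref{thm:BDGgeneralUMD}.
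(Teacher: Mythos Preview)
Your proposal is correct and follows exactly the paper's approach: observe that pure discontinuity gives $[\langle M,x^*\rangle]_t = \sum_{0\le s\le t}|\langle\Delta M_s,x^*\rangle|^2$, then invoke Theorem~\ref{thm:BDGgeneralUMD} together with Lemma~\ref{lem:gammaofV=gammaofTfits}. You have simply spelled out the details (the operator $T$, the separability issue) that the paper leaves implicit in its two-line proof and in the parenthetical remark following the theorem statement.
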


(In this case a $\gamma$-norm is well-defined. Indeed, if $H$ is nonseparable, then an operator $T:\mathcal L(H, X)$ is said to an infinite $\gamma$-norm if there exists an uncountable orthonormal system $(h_{\alpha})_{\alpha \in \Lambda}$ such that $Th_{\alpha} \neq 0$ for any $\alpha \in \Lambda$. Otherwise, there exists a separable Hilbert subspace $H_0 \subset H$ such that $T|_{H_{0}^{\perp}} = 0$, and then we set $\|T\|_{\gamma(H, X)} := \|T|_{H_0}\|_{\gamma(H_0, X)} $).

\begin{proof}
 It is sufficient to notice that for any $x^*\in X^*$ a.s.\
 \[
  [\langle M, x^*\rangle]_t = \sum_{0\leq s\leq t} |\langle \Delta M_s, x^*\rangle|^2,
 \]
 and apply Theorem \ref{thm:BDGgeneralUMD} and Lemma \ref{lem:gammaofV=gammaofTfits}.
\end{proof}

\begin{remark} Note that martingales in Theorem \ref{thm:0<p<inftyforcontmart} and \ref{thm:BDGUMDpdmartingales} cover all the martingales if $X$ is UMD.
 More specifically, if $X$ has the UMD property, then any $X$-valued local martingale $M$ has a unique decomposition $M = M^c  + M^d$ into a sum of a continuous local martingale $M^c$ and a purely discontinuous local martingale $M^d$ (such a decomposition is called the {\em Meyer-Yoeurp decomposition}, and it characterizes the UMD property, see \cite{Y17MartDec,Y17GMY}).
\end{remark}

\subsection{Martingales with independent increments}\label{martwithindepincr}

Here we show that both Theorem \ref{thm:BDGgendiscmart} and \ref{thm:BDGgeneralUMD} hold in much more general Banach spaces given the corresponding martingale has independent increments.

\begin{proposition}\label{prop:indincBDGgenBanachspaces}
 Let $X$ be a Banach space, $(d_n)_{n\geq 1}$ be an $X$-valued martingale difference sequence with independent increments. Then for any $1 < p<\infty$
 \[
  \mathbb E \sup_{m\geq 1} \Bigl\|\sum_{n=1}^m d_n\Bigr\|^p \lesssim_{p} \mathbb E \|(d_n)_{n=1}^{\infty}\|_{\gamma(\ell^2, X)}^p.
 \]
Moreover, if $X$ has a finite cotype, then
\[
  \mathbb E \sup_{m\geq 1} \Bigl\|\sum_{n=1}^m d_n\Bigr\|^p \eqsim_{p, X} \mathbb E \|(d_n)_{n=1}^{\infty}\|_{\gamma(\ell^2, X)}^p.
 \]
\end{proposition}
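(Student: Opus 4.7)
The plan is to reduce to sums of independent random variables, apply Banach space valued symmetrization, compare Rademacher and Gaussian sums, and finish with Kahane-Khinchin. Because $(d_n)$ is a martingale difference sequence with independent increments, each $d_n$ is independent of $\mathcal F_{n-1}=\sigma(d_1,\dots,d_{n-1})$ and has mean zero; hence the $d_n$ are a sequence of independent mean-zero $X$-valued random variables. This is the feature that replaces the UMD hypothesis: all the randomized inequalities we need (symmetrization, Gaussian/Rademacher comparison, Kahane-Khinchin) go through without any geometric assumption beyond those explicitly named.

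For the first, one-sided bound (any $X$), I would start by invoking Doob's maximal inequality \eqref{eq:DoobsineqXBanach} to get $\mathbb E\sup_{m}\|\sum_{n=1}^m d_n\|^p\eqsim_p \mathbb E\|\sum_{n=1}^\infty d_n\|^p$. Next I would use the standard Banach-valued symmetrization for independent mean-zero variables (as in \cite[Proposition 6.1.12]{HNVW2} via an independent copy and Jensen) to obtain $\mathbb E\|\sum d_n\|^p\lesssim_p \mathbb E\mathbb E_r\|\sum r_n d_n\|^p$, where $(r_n)$ are Rademachers. Then the domination $\mathbb E_r\|\sum r_n x_n\|^p \lesssim_p \mathbb E_\gamma\|\sum \gamma_n x_n\|^p$ (true in every Banach space, by writing $\gamma_n=r_n|\gamma_n|$ and applying Jensen to $\mathbb E_{|\gamma|}$) followed by Kahane-Khinchin \cite[Theorem 6.2.6]{HNVW2} inside $\mathbb E_\gamma$ yields
\[
\mathbb E\Bigl\|\sum_n d_n\Bigr\|^p\lesssim_p \mathbb E\mathbb E_\gamma\Bigl\|\sum_n \gamma_n d_n\Bigr\|^p \eqsim_p \mathbb E\Bigl(\mathbb E_\gamma\Bigl\|\sum_n\gamma_n d_n\Bigr\|^2\Bigr)^{p/2} = \mathbb E\|(d_n)\|_{\gamma(\ell^2,X)}^p.
\]

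For the reverse inequality under finite cotype, I would run essentially the same chain backwards. First, Kahane-Khinchin gives $\mathbb E\|(d_n)\|_{\gamma(\ell^2,X)}^p\eqsim_p\mathbb E\mathbb E_\gamma\|\sum\gamma_n d_n\|^p$. Since $X$ has finite cotype, Maurey-Pisier (see \cite[Corollary 7.2.10 and Proposition 7.3.15]{HNVW2}) furnishes the reverse comparison $\mathbb E_\gamma\|\sum\gamma_n x_n\|^p\lesssim_{p,X}\mathbb E_r\|\sum r_n x_n\|^p$, so $\mathbb E\mathbb E_\gamma\|\sum\gamma_n d_n\|^p\lesssim_{p,X}\mathbb E\mathbb E_r\|\sum r_n d_n\|^p$. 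The remaining step is a desymmetrization that, for independent mean-zero $(d_n)$, is proved by introducing an independent copy $(d_n')$, writing $\sum r_n d_n = \mathbb E_{d'}\sum r_n(d_n-d_n')$, applying Jensen, using the symmetry of the $d_n-d_n'$ to drop the $r_n$'s, and finishing with the triangle inequality to conclude $\mathbb E\mathbb E_r\|\sum r_n d_n\|^p \lesssim_p \mathbb E\|\sum d_n\|^p$. Finally $\|\sum_n d_n\|\le \sup_m\|\sum_{n=1}^m d_n\|$ gives the desired bound by the maximum.

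The argument has no single hard step; the main technical point is being careful about the desymmetrization direction, which in a general Banach space \emph{requires} the mean-zero and independence assumptions (unlike the symmetrization direction) and would fail without the independent-increment hypothesis. Everything else is a clean concatenation of classical inequalities, with the role normally played by UMD decoupling being replaced for free by genuine independence of the $d_n$.
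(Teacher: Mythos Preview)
Your proof is correct and follows essentially the same route as the paper's: Doob's maximal inequality, symmetrization for independent mean-zero summands, the Rademacher-to-Gaussian comparison (one-sided in general, two-sided under finite cotype via Maurey--Pisier), and Kahane--Khinchin. The only cosmetic difference is that the paper cites \cite[Lemma~6.3]{LT91} once for the two-sided symmetrization $\mathbb E\|\sum d_n\|^p\eqsim_p\mathbb E\mathbb E_r\|\sum r_n d_n\|^p$, whereas you split this into a forward symmetrization and a separate desymmetrization step; the content is identical.
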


\begin{proof}
 Let $(r_n)_{n\geq 1}$ be a sequence of independent Rademacher random variables, $(\gamma_n)_{n\geq 1}$ be a sequence of independent standard Gaussian random variables. Then
 \begin{align*}
  \mathbb E \sup_{m\geq 1}\Bigl\|\sum_{n=1}^{m} d_n\Bigr\|^p &\stackrel{(i)}\eqsim_p \mathbb E\Bigl\|\sum_{n=1}^{\infty} d_n\Bigr\|^p  \stackrel{(ii)}\eqsim_{p}\mathbb E\mathbb E_{r} \Bigl\|\sum_{n=1}^{N} r_n d_n\Bigr\|^p\\
  &\stackrel{(iii)}\lesssim_{p}\mathbb E \mathbb E_{\gamma}\Bigl\|\sum_{n=1}^{N} \gamma_n d_n\Bigr\|^p \stackrel{(iv)}\eqsim_{p}\mathbb E \Bigl(\mathbb E_{\gamma} \Bigl\|\sum_{n=1}^{N} \gamma_n d_n\Bigr\|^2\Bigr)^{\frac p2}\\
  &= \mathbb E \|(d_n)_{n=1}^{\infty}\|_{\gamma(\ell^2, X)}^p,
 \end{align*}
 where $(i)$ follows from \eqref{eq:DoobsineqXBanach}, $(ii)$ follows from \cite[Lemma 6.3]{LT91}, $(iii)$ holds by \cite[Proposition 6.3.2]{HNVW2}, and finally $(iv)$ follows from \cite[Proposition 6.3.1]{HNVW2}.
 
 If $X$ has a finite cotype, then one has $\eqsim_{p, X}$ instead of $\lesssim_{p}$ in $(iii)$ (see \cite[Corollary 7.2.10]{HNVW2}), and the second part of the proposition follows.
\end{proof}

Based on Proposition \ref{prop:indincBDGgenBanachspaces} and the proof of Theorem \ref{thm:BDGgeneralUMD} one can show the following assertion. Notice that we presume the reflexivity of $X$ since it was assumed in the whole Section \ref{sec:Gaussiancharacteristic}.

\begin{proposition}
 Let $X$ be a reflexive Banach space, $1\leq p<\infty$, $M:\mathbb R_+\times \Omega \to X$ be an $L^p$-bounded martingale with independent increments such that $M_0=0$. Let $t\geq 0$. If $M$ has a covariation bilinear form $[\![M]\!]_t$ at $t$, then
 \[
  \mathbb E \sup_{0\leq s\leq t}\|M_s\|^p \lesssim_{p, X} \mathbb E \gamma([\![M]\!]_t)^p.
 \]
Moreover, if $X$ has a finite cotype, then the existence of $[\![M]\!]_t$ is guaranteed, and
\[
  \mathbb E \sup_{0\leq s\leq t}\|M_s\|^p \eqsim_{p, X} \mathbb E \gamma([\![M]\!]_t)^p.
 \]
\end{proposition}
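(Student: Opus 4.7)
The plan is to follow the two-step structure of the proof of Theorem~\ref{thm:BDGgeneralUMD} verbatim, but replacing every appeal to Theorem~\ref{thm:BDGgendiscmart} with the corresponding half of Proposition~\ref{prop:indincBDGgenBanachspaces}. The key structural fact is that independence of increments of $M$ is preserved both under deterministic sampling, so that the discrete sequence $(M_{t_i^N}-M_{t_{i-1}^N})_i$ has independent increments, and under the dual projections $P_n^*$ used in the approximation below.

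First I would treat the case $\dim X<\infty$, where $[\![M]\!]_t$ exists automatically by Remark~\ref{rem:existenceof[[M]]givenXisfd}. Choosing partitions $0=t_0^N<\cdots<t_{n_N}^N=t$ with mesh tending to $0$ and defining the bilinear forms $V_N$ as in \eqref{eq:dfofV_NforageneralBDGgenBanachspace}, Proposition~\ref{prop:indincBDGgenBanachspaces} applied to the discrete martingale $(M_{t_i^N})_i$ yields
\[
\mathbb E \sup_{i}\|M_{t_i^N}\|^p \lesssim_p \mathbb E \gamma(V_N)^p,
\]
with the two-sided equivalence $\eqsim_{p,X}$ if in addition $X$ has finite cotype. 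Exactly as in Step~1 of the proof of Theorem~\ref{thm:BDGgeneralUMD}, Proposition~\ref{prop:continuity+squreboundednessofgammaidukuryu} combined with Lemma~\ref{lem:boundedbessofVbyagoodnorms} reduces $\gamma(V_N-[\![M]\!]_t)\to 0$ in $L^p$ to the real-valued $L^{p/2}$-convergence $V_N(x_i^*,x_i^*)\to [\langle M,x_i^*\rangle]_t$ for the finitely many fixed $x_i^*\in X^*$ supplied by Lemma~\ref{lem:boundedbessofVbyagoodnorms}, which is classical.

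For general reflexive $X$ I would pick an increasing family of finite dimensional subspaces $Y_n\subset X^*$ with $\overline{\bigcup_n Y_n}=X^*$ and inclusions $P_n\colon Y_n\hookrightarrow X^*$. Each $P_n^* M$ is a $Y_n^*$-valued $L^p$-bounded martingale with independent increments, so Step~1 gives $\mathbb E \sup_{0\le s\le t}\|P_n^* M_s\|^p \lesssim_p \mathbb E \gamma([\![P_n^* M]\!]_t)^p$. Assuming $[\![M]\!]_t$ exists, Lemma~\ref{lem:V_0isonsubsetofVsoitsgammaislettznakomo} bounds the right-hand side by $\mathbb E \gamma([\![M]\!]_t)^p$ uniformly in $n$, and $\|P_n^* M_s\|\nearrow\|M_s\|$ by reflexivity and Hahn--Banach, so monotone convergence yields the upper bound. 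For the finite-cotype case, the two-sided version of Proposition~\ref{prop:indincBDGgenBanachspaces} gives instead $\mathbb E\gamma([\![P_n^* M]\!]_t)^p \eqsim_{p,X}\mathbb E\sup_{s\le t}\|P_n^* M_s\|^p\le \mathbb E\sup_{s\le t}\|M_s\|^p$ uniformly in $n$; then the gluing and density argument from Step~2 of the proof of Theorem~\ref{thm:BDGgeneralUMD} produces a continuous symmetric nonnegative bilinear form on $X^*\times X^*$ with finite Gaussian characteristic that must coincide with $[\![M]\!]_t$, proving both existence and the matching lower bound.

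The main obstacle I anticipate is the existence part in the finite-cotype case: one has to verify that the monotone sequence $(\gamma([\![P_n^* M]\!]_t))_{n\ge 1}$ remains a.s.\ finite, which is precisely where the uniform discrete two-sided estimate $\eqsim_{p,X}$ is needed, so that the continuity bound \eqref{eq:contofVduetogammanormtikinula} permits the limit bilinear form defined on $Y=\bigcup_n Y_n$ to be extended by continuity to all of $X^*\times X^*$. Without finite cotype the discrete inequality is only one-sided and this uniform control is lost, which is why existence of $[\![M]\!]_t$ must be assumed a priori in the first half of the proposition.
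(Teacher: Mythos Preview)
Your proposal is correct and follows exactly the approach the paper itself prescribes: the paper's proof is the single sentence ``The proof coincides with the proof of Theorem~\ref{thm:BDGgeneralUMD}, but one needs to use Proposition~\ref{prop:indincBDGgenBanachspaces} instead of Theorem~\ref{thm:BDGgendiscmart},'' and you have carefully unpacked precisely that, including the crucial observation that deterministic sampling and the projections $P_n^*$ preserve independence of increments. Your analysis of where finite cotype is actually needed (namely, to obtain the uniform two-sided discrete estimate that forces the monotone limit $\gamma([\![P_n^*M]\!]_t)$ to remain finite and thereby yields existence of $[\![M]\!]_t$) is also on target.
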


\begin{proof}
 The proof coincides with the proof of Theorem \ref{thm:BDGgeneralUMD}, but one needs to use Proposition \ref{prop:indincBDGgenBanachspaces} instead of Theorem \ref{thm:BDGgendiscmart}.
\end{proof}

\subsection{One-sided estimates}\label{subsec:onesided}

In practice one often needs only the upper bound of \eqref{eq:discBDGUMDBanach}. It turns out that existence of such estimates for a fixed Banach space $X$ is equivalent to the fact that $X$ has {\em the UMD$^-$ property}.

\begin{definition}
A Banach space $X$ is called a {\em UMD$^-$ space} if for some (equivalently, for all)
$p \in (1,\infty)$ there exists a constant $\beta>0$ such that
for every $n \geq 1$, every martingale
difference sequence $(d_j)^n_{j=1}$ in $L^p(\Omega; X)$, and every sequence
$(r_j)^n_{j=1}$ of independent Rademachers
we have
\[
\Bigl(\mathbb E \Bigl\| \sum^n_{j=1} d_j\Bigr\|^p\Bigr )^{\frac 1p}
\leq \beta \Bigl(\mathbb E \Bigl \| \sum^n_{j=1}r_jd_j\Bigr\|^p\Bigr )^{\frac 1p}.
\]
The least admissible constant $\beta$ is denoted by $\beta_{p,X}^{-}$ and is called the {\em UMD$^-$ constant}.
\end{definition}

By the definition of the UMD property and a triangular inequality one can show that UMD implies UMD$^-$. Moreover, UMD$^-$ is a strictly bigger family of Banach spaces and includes nonreflexive Banach spaces such as $L^1$. The reader can find more information
on UMD$^-$ spaces in \cite{HNVW1,Ver07,CG,Gar90,Gar85,Geiss99}.

The following theorem presents the desired equivalence.

\begin{theorem}\label{thm:upperboundsUMD-}
 Let $X$ be a Banach space, $1\leq p<\infty$. Then $X$ has the UMD$^-$ property if and only if one has that for any $X$-valued martingale difference sequence $(d_n)_{n=1}^m$
 \begin{equation}\label{eq:discBDGUMD-Banach}
   \mathbb E \sup_{m\geq 1}\Bigl\|\sum_{n=1}^{m} d_n\Bigr\|^p \lesssim_{p, X} \mathbb E \|(d_n)_{n=1}^{\infty}\|_{\gamma(\ell^2, X)}^p.
 \end{equation}
\end{theorem}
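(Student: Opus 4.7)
The plan is to establish both directions of the equivalence by adapting the chain of inequalities in the proof of Theorem~\ref{thm:BDGgendiscmart}, retaining only the ``upper-bound'' part of each step; the $(\Rightarrow)$ direction is then a routine adaptation, while $(\Leftarrow)$ is more delicate.

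For $(\Rightarrow)$, assuming $X$ has UMD$^-$, I would retrace the proof of Theorem~\ref{thm:BDGgendiscmart}, replacing the two-sided UMD-based step by the one-sided UMD$^-$ inequality. Concretely, for $p>1$ I would chain: Doob's inequality \eqref{eq:DoobsineqXBanach}, then UMD$^-$ to bound $\mathbb E\|\sum d_n\|^p$ by $(\beta_{p,X}^-)^p\,\mathbb E\|\sum r_n d_n\|^p$, then the Rademacher-to-Gaussian comparison $\mathbb E_r\|\sum r_n d_n\|^p \leq C_p\,\mathbb E_\gamma\|\sum \gamma_n d_n\|^p$ (which is valid in every Banach space, via the decomposition $\gamma_n = r_n|\gamma_n|$ and Jensen's inequality, and requires no cotype assumption), and finally Kahane-Khinchin to identify the Gaussian $L^p$-norm with $\|(d_n)\|_{\gamma(\ell^2,X)}$. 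For $p=1$, I would use a good-$\lambda$ inequality argument in the spirit of Remark~\ref{rem:phidisccase} to reduce to the case $p>1$.

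For $(\Leftarrow)$, assuming \eqref{eq:discBDGUMD-Banach}, I would take a martingale difference sequence $(d_n)$ on $(\Omega,\mathcal F,\mathbb P,\mathbb F)$ together with an independent Rademacher sequence $(r_n)$, and view $(r_n d_n)$ as a martingale difference in the enlarged filtration $\mathcal G_n=\mathcal F_n\vee\sigma(r_1,\ldots,r_n)$. The crucial structural fact is that the $\gamma$-norm is invariant under pointwise sign changes, so $\|(r_n d_n)\|_{\gamma(\ell^2,X)}=\|(d_n)\|_{\gamma(\ell^2,X)}$ almost surely. Applying \eqref{eq:discBDGUMD-Banach} to $(d_n)$ then yields $\mathbb E\|\sum d_n\|^p \lesssim_{p,X} \mathbb E\|(d_n)\|_{\gamma(\ell^2,X)}^p$.

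The main obstacle lies in closing the chain: to deduce the UMD$^-$ inequality $\mathbb E\|\sum d_n\|^p \lesssim \mathbb E\|\sum r_n d_n\|^p$ one still needs $\mathbb E\|(d_n)\|_{\gamma(\ell^2,X)}^p \lesssim \mathbb E\|\sum r_n d_n\|^p$, which pointwise in $\omega$ amounts to dominating the Gaussian $L^2$-quantity $\|(d_n(\omega))\|_{\gamma(\ell^2,X)}$ by the Rademacher $L^2$-quantity $(\mathbb E_r\|\sum r_n d_n(\omega)\|^2)^{1/2}$, and is essentially the finite Gaussian cotype property of $X$. I therefore expect the proof to first extract finite cotype of $X$ from \eqref{eq:discBDGUMD-Banach}---for instance by applying it to a martingale of the form $d_n=\gamma_n x_n$ driven by an auxiliary independent Gaussian sequence, which forces a non-trivial Gaussian-versus-non-Gaussian comparison---and then to combine cotype with \eqref{eq:discBDGUMD-Banach} and Kahane-Khinchin to close the loop; passage between different values of $p$, when needed, is handled by a good-$\lambda$ argument as in Remark~\ref{rem:phidisccase}.
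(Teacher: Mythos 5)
Your forward direction is correct and is essentially the paper's argument. The paper handles the maximal function for all $p\ge 1$ at once via Burkholder's good-$\lambda$ inequality \cite[(8.22)]{Burk86} and the L\'evy-type inequality \cite[Proposition 6.1.12]{HNVW2}, whereas you use Doob \eqref{eq:DoobsineqXBanach} for $p>1$ and defer to good-$\lambda$ only at $p=1$; the remaining steps (UMD$^-$ decoupling, the universal domination of Rademacher sums by Gaussian sums via $\gamma_n=r_n|\gamma_n|$ and Jensen, Kahane--Khintchine) are exactly the paper's, and you are right that no cotype enters here. Your reduction of the converse to the single claim that \eqref{eq:discBDGUMD-Banach} forces finite cotype is also precisely how the paper argues: it deduces finite cotype from \eqref{eq:discBDGUMD-Banach} by invoking \cite[Corollary 7.3.14]{HNVW2} and then closes the loop with \cite[Corollary 7.2.10]{HNVW2}.

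The genuine gap is in that cotype-extraction step, and your concrete suggestion for it does not work. By Maurey--Pisier, any proof that \eqref{eq:discBDGUMD-Banach} implies finite cotype must exhibit martingale difference sequences in $\ell^\infty_N$ for which \eqref{eq:discBDGUMD-Banach} fails with constants blowing up in $N$. But for your test sequence $d_n=\gamma_n e_n$ in $\ell^\infty_N$ one has
$\mathbb E\sup_{m}\bigl\|\sum_{n\le m}\gamma_n e_n\bigr\|_\infty^p=\mathbb E\max_{n\le N}|\gamma_n|^p$, while
$\|(\gamma_n e_n)_n\|_{\gamma(\ell^2_N,\ell^\infty_N)}^2=\mathbb E_{\gamma'}\max_{n\le N}|\gamma'_n\gamma_n|^2\ge\max_{n\le N}|\gamma_n|^2$,
so \eqref{eq:discBDGUMD-Banach} holds with constant $1$ for this family in the prototypical space without finite cotype. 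More generally, the substitution $d_n=\gamma_n x_n$ only yields an estimate of a single Gaussian sum by a doubly randomized Gaussian quantity, which is the trivial direction of that comparison and is valid in every Banach space; it therefore carries no cotype information. So the crux of the converse is left unresolved in your sketch. The paper resolves it by citation rather than by an explicit test martingale, and reproducing the argument requires the characterization of finite cotype behind \cite[Corollary 7.3.14]{HNVW2}, not a direct plug-in of a Gaussian-driven martingale.
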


\begin{proof}
Assume that $X$ has the UMD$^-$ property. Let  $(d_n)_{n=1}^m$ be an $X$-valued martingale difference sequence. Then we have that for a sequence  $(r_n)_{n\geq 1}$ of independent Rademacher random variables and for a sequence $(\gamma_n)_{n\geq 1}$ of independent standard Gaussian random variables
 \begin{align*}
  \mathbb E \sup_{m\geq 1}\Bigl\|\sum_{n=1}^{m} d_n\Bigr\|^p &\stackrel{(i)}\lesssim_p \mathbb E   \mathbb E_{r}\sup_{m\geq 1} \Bigl\|\sum_{n=1}^{N} r_n d_n\Bigr\|^p \stackrel{(ii)}\eqsim_{p,X}\mathbb E\mathbb E_{r} \Bigl\|\sum_{n=1}^{N} r_n d_n\Bigr\|^p\nonumber\\
  &\stackrel{(iii)}\eqsim_{p,X}\mathbb E \mathbb E_{\gamma}\Bigl\|\sum_{n=1}^{N} \gamma_n d_n\Bigr\|^p \stackrel{(iv)}\eqsim_{p}\mathbb E \Bigl(\mathbb E_{\gamma} \Bigl\|\sum_{n=1}^{N} \gamma_n d_n\Bigr\|^2\Bigr)^{\frac p2}\\
  &= \mathbb E \|(d_n)_{n=1}^{\infty}\|_{\gamma(\ell^2, X)}^p,\nonumber
 \end{align*}
where $(i)$ follows from \cite[(8.22)]{Burk86}, $(ii)$ holds by \cite[Proposition 6.1.12]{HNVW2}, $(iii)$ follows from \cite[Corollaries 7.2.10 and 7.3.14]{HNVW2}, and $(iv)$ follows from \cite[Proposition 6.3.1]{HNVW2}.

Let us show the converse. Assume that \eqref{eq:discBDGUMD-Banach} holds for any $X$-valued martingale difference sequence $(d_n)_{n=1}^m$. Then $X$ has a finite cotype by \cite[Corollary 7.3.14.]{HNVW2}, and the desired UMD$^-$ property follows from \cite[Corollary 7.2.10]{HNVW2}.
\end{proof}

\begin{remark}
 Unfortunately, it remains open whether one can prove the upper bound of \eqref{eq:thmBDGgeneralUMD} given $X$ has the UMD$^{-}$ property. The problem is in the approximation argument employed in the proof of \eqref{eq:thmBDGgeneralUMD}: we can not use an increasing sequence $(Y_n)_{n\geq 1}$ of finite dimensional subspaces of $X^*$ since we can not guarantee that $\beta_{p, Y_n^*}^-$ does not blow up as $n\to \infty$ (recall that $\beta_{p, Y_n^*} \leq \beta_{p, X}$ by the duality argument, see \cite[Proposition 4.2.17]{HNVW1}). Nonetheless, such an upper bound can be shown for $X=L^1$ by an {\em ad hoc} argument (by using an increasing sequence of projections onto finite-dimensional $L^1$-spaces).
\end{remark}

\section{Applications and miscellanea}\label{subsec:appandmis}

Here we provide further applications of Theorem \ref{thm:BDGgeneralUMD}.

\subsection{It\^o isomorphism: general martingales}\label{subsec:applicItoisomgenmart}

Let $H$ be a Hilbert space, $X$ be a Banach space. For each $x\in X$ and $h \in H$ we denote the linear operator $g\mapsto \langle g, h\rangle x$, $g\in H$, by $h\otimes x$. The process $\Phi: \mathbb R_+ \times \Omega \to \mathcal L(H,X)$ is called  \textit{elementary predictable}
with respect to the filtration $\mathbb F = (\mathcal F_t)_{t \geq 0}$ if it is of the form
\begin{equation}\label{eq:elprog}
 \Phi(t,\omega) = \sum_{k=1}^K\sum_{m=1}^M \mathbf 1_{(t_{k-1},t_k]\times B_{mk}}(t,\omega)
\sum_{n=1}^N h_n \otimes x_{kmn},\;\;\; t\geq 0,\;\; \omega \in \Omega,
\end{equation}
where $0 = t_0 < \ldots < t_K <\infty$, for each $k = 1,\ldots, K$ the sets
$B_{1k},\ldots,B_{Mk}$ are in $\mathcal F_{t_{k-1}}$ and the vectors $h_1,\ldots,h_N$ are in $H$.
Let $\widetilde M:\mathbb R_+ \times \Omega \to H$ be a local martingale. Then we define the {\em stochastic integral} $\Phi \cdot \widetilde M:\mathbb R_+ \times \Omega \to X$ of $\Phi$ with respect to $\widetilde M$ as follows:
\begin{equation}\label{eq:defofstochintwrtM}
 (\Phi \cdot \widetilde M)_t := \sum_{k=1}^K\sum_{m=1}^M \mathbf 1_{B_{mk}}
\sum_{n=1}^N \langle(\widetilde M(t_k\wedge t)- \widetilde M(t_{k-1}\wedge t)), h_n\rangle x_{kmn},\;\; t\geq 0.
\end{equation}

Notice that for any $t\geq 0$ the stochastic integral $\Phi \cdot \widetilde M$ obtains a covariation bilinear form $[\![\Phi \cdot \widetilde M]\!]_t$ which is a.s.\ continuous on $X^*\times X^*$ and which has the following form due to \eqref{eq:whyq_Misinportant5element} and \eqref{eq:defofstochintwrtM}
\begin{equation}\label{eq:[[M]]forstochintr}
\begin{split}
  [\![\Phi \cdot \widetilde M]\!]_t(x^*, x^*) &=   \Bigl[\Bigl\langle \int_0^{\cdot} \Phi \ud \widetilde M, x^* \Bigr\rangle\Bigr]_t = \Bigl[ \int_0^{\cdot} (\Phi^* x^*)^* \ud \widetilde M \Bigr]_t\\
  &= \int_0^t \|q_{\widetilde M}^{1/2}(s)\Phi^*(s)x^*\|^2 \ud [\widetilde M]_s,\;\;\; t\geq 0.
\end{split}
\end{equation}

\begin{remark}\label{rem:X=Rstochintextension}
 If $X = \mathbb R$, then by the real-valued Burkholder-Davis-Gundy inequality and the fact that for any elementary predictable $\Phi$
 \[
  \Bigl[ \int_0^{\cdot} \Phi \ud \widetilde M \Bigr]_t 
  = \int_0^t \|q_{\widetilde M}^{1/2}(s)\Phi^*(s)\|^2 \ud [\widetilde M]_s,\;\;\; t\geq 0,
 \]
 one has an isomorphism
 \[
  \mathbb E \sup_{t\geq 0} |(\Phi \cdot \widetilde M)_t| \eqsim \mathbb E \Bigl(\int_0^{\infty} \|q_{\widetilde M}^{1/2}(s)\Phi(s)\|^2 \ud [\widetilde M]_s\Bigr)^{\frac 12},
 \]
so
one can extend the definition of a stochastic integral to {\em all} predictable $\Phi:\mathbb R_+ \times \Omega \to H$ with
\begin{equation}\label{eq:PhiintwrtwidetildeMscalar}
  \mathbb E \Bigl(\int_0^{\infty} \|q_{\widetilde M}^{1/2}(s)\Phi(s)\|^2 \ud [\widetilde M]_s\Bigr)^{\frac 12}<\infty,
\end{equation}
by extending the stochastic integral operator from a dense subspace of all elementary predictable processes satisfying \eqref{eq:PhiintwrtwidetildeMscalar}.
We refer the reader to \cite{Mey77,MP,Kal} for details.
\end{remark}

\begin{remark}\label{rem:X=R^dstochintextension}
 Let $X = \mathbb R^d$ for some $d\geq 1$. Then analogously to Remark \ref{rem:X=Rstochintextension} one can extend the definition of a stochastic integral to all predictable processes $\Phi:\mathbb R_+ \times \Omega \to \mathcal L(H, \mathbb R^d)$ with
 \begin{align*}
  \mathbb E \Bigl(\sum_{n=1}^d \int_0^{\infty} \|q_{\widetilde M}^{1/2}(s)\Phi^*(s)e_n\|^2 \ud [\widetilde M]_s\Bigr)^{\frac 12}&= \mathbb E \|q_{\widetilde M}^{1/2}\Phi^*\|_{HS(\mathbb R^d, L^2(\mathbb R_+; [\widetilde M]))}\\
  &= \mathbb E \|\Phi q_{\widetilde M}^{1/2}\|_{HS( L^2(\mathbb R_+; [\widetilde M]),\mathbb R^d)}<\infty,
 \end{align*}
 where $(e_n)_{n=1}^d$ is a basis of $\mathbb R^d$, $\|T\|_{HS(H_1, H_2)}$ is the Hilbert-Schmidt norm of an operator $T$ acting form a Hilbert space $H_1$ to a Hilbert space $H_2$, and $ L^2(\mathbb R_+; A)$ for a given increasing $A:\mathbb R_+ \to \mathbb R$ is a Hilbert space of all functions $f:\mathbb R_+ \to \mathbb R$ such that $ \int_{\mathbb R_+} \|f(s)\|^2 \ud A(s)<\infty$.
\end{remark}

Now we present the It\^o isomorphism for vector-valued stochastic integrals with respect to general martingales, which extends \cite{VY16,NVW,Ver}.

\begin{theorem}\label{thm:stochintelempredPhi}
 Let $H$ be a Hilbert space, $X$ be a UMD Banach space, $\widetilde M:\mathbb R_+ \times \Omega \to H$ be a local martingale, $\Phi:\mathbb R_+ \times \Omega \to \mathcal L(H,X)$ be elementary predictable. Then for all $1\leq p<\infty$
 \[
  \mathbb E \sup_{0\leq s\leq t}\Bigl\|\int_0^s \Phi \ud \widetilde M\Bigr\|^p \eqsim_{p,X} \mathbb E \|\Phi q_{\widetilde M}^{1/2}\|^p_{\gamma(L^2([0,t], [\widetilde M];H),X)},\;\;\; t\geq 0,
 \]
where $[\widetilde M]$ is the quadratic variation of $\widetilde M$, $q_{\widetilde M}$ is the quadratic variation derivative (see Subsection \ref{subsec:qv+q_M}), and $\|\Phi q_{\widetilde M}^{1/2}\|^p_{\gamma(L^2([0,t], [\widetilde M];H),X)}$ is the $\gamma$-norm (see \eqref{eq:defofgammanormsnove}).
\end{theorem}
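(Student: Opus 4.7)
\smallskip

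My plan is to apply Theorem \ref{thm:BDGgeneralUMD} directly to the $X$-valued process $M := \Phi \cdot \widetilde M$, and then to identify the Gaussian characteristic of its covariation bilinear form with the stated $\gamma$-norm. Since $\Phi$ is elementary predictable of the form \eqref{eq:elprog}, the process $\Phi \cdot \widetilde M$ is a finite sum of martingale transforms of $\widetilde M$ with $\mathcal F_{t_{k-1}}$-measurable coefficients, hence is itself an $X$-valued local martingale starting at $0$. Therefore Theorem \ref{thm:BDGgeneralUMD} yields, for every $1\leq p<\infty$ and $t\geq 0$,
\[
\mathbb E \sup_{0\leq s\leq t}\Bigl\|\int_0^s \Phi \ud \widetilde M\Bigr\|^p \eqsim_{p,X} \mathbb E \gamma([\![\Phi \cdot \widetilde M]\!]_t)^p.
\]

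\smallskip

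The remaining step is to identify $\gamma([\![\Phi \cdot \widetilde M]\!]_t)$ pathwise with the $\gamma$-norm of $\Phi q_{\widetilde M}^{1/2}$. By \eqref{eq:[[M]]forstochintr}, for almost every $\omega\in\Omega$ the covariation bilinear form is
\[
[\![\Phi \cdot \widetilde M]\!]_t(x^*, y^*) = \int_0^t \bigl\langle q_{\widetilde M}^{1/2}(s)\Phi^*(s)x^*,\,q_{\widetilde M}^{1/2}(s)\Phi^*(s)y^*\bigr\rangle_H \ud [\widetilde M]_s,
\]
which is obtained by polarizing the expression in \eqref{eq:[[M]]forstochintr}. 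Consider the (pathwise) operator $T(\omega): L^2([0,t],[\widetilde M](\omega); H) \to X$ whose adjoint sends $x^* \in X^*$ to the function $s \mapsto q_{\widetilde M}^{1/2}(s,\omega)\Phi^*(s,\omega)x^*$; in other words $T(\omega)$ is nothing but the Pettis integral operator $f \mapsto \int_0^t \Phi(s,\omega)q_{\widetilde M}^{1/2}(s,\omega) f(s)\,\dd[\widetilde M]_s(\omega)$ associated to $\Phi q_{\widetilde M}^{1/2}$. Then $[\![\Phi \cdot \widetilde M]\!]_t(\omega)(x^*, y^*) = \langle T^*(\omega)x^*, T^*(\omega)y^*\rangle$, so Lemma \ref{lem:gammaofV=gammaofTfits} gives
\[
\gamma\bigl([\![\Phi \cdot \widetilde M]\!]_t(\omega)\bigr) = \|T(\omega)\|_{\gamma(L^2([0,t],[\widetilde M];H),X)} = \|\Phi q_{\widetilde M}^{1/2}\|_{\gamma(L^2([0,t],[\widetilde M];H),X)}(\omega)
\]
almost surely. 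Taking $p$-th moments and combining with the previous display finishes the proof.

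\smallskip

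The only slightly delicate point is making sense of the random Hilbert space $L^2([0,t],[\widetilde M](\omega);H)$ together with the random operator $T(\omega)$ in a measurable way. However, since $\Phi$ is of the concrete elementary form \eqref{eq:elprog}, $T(\omega)$ has finite rank pathwise (bounded by $KMN$), the $\gamma$-norm reduces to a finite Gaussian sum involving the $\mathcal F_{t_{k-1}}$-measurable data and the bracket $[\widetilde M]$, and everything is manifestly measurable. For the same reason both sides are automatically finite for such $\Phi$ (after a standard localization if needed, or via Lenglart once the equivalence is established), so no further integrability issues arise. This is the main—and essentially the only—technical step; it is routine compared with the construction of $[\![M]\!]_t$ already carried out in Theorem \ref{thm:BDGgeneralUMD}.
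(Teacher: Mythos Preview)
Your proof is correct and follows essentially the same route as the paper: apply Theorem \ref{thm:BDGgeneralUMD} to $M=\Phi\cdot\widetilde M$, use \eqref{eq:[[M]]forstochintr} to identify the covariation bilinear form, and then invoke Lemma \ref{lem:gammaofV=gammaofTfits} to translate $\gamma([\![M]\!]_t)$ into the $\gamma$-norm of $\Phi q_{\widetilde M}^{1/2}$. The extra remarks you make about measurability and finite rank are fine but not strictly needed beyond what the paper already establishes.
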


\begin{proof} Fix $t\geq 0$. Then the theorem holds by Theorem \ref{thm:BDGgeneralUMD}, Lemma \ref{lem:gammaofV=gammaofTfits}, and the fact that by \eqref{eq:[[M]]forstochintr}  for any fixed $x^*\in X^*$ a.s.\
 \begin{align*}
    \Bigl[\Bigl\langle \int_0^{\cdot} \Phi \ud \widetilde M, x^*\Bigr\rangle\Bigr]_t = \Bigl[ \int_0^{\cdot} \langle\Phi, x^* \rangle\ud \widetilde M \Bigr]_t &= \int_0^t \|q_M^{\frac 12} \Phi^*x^*\|^2 \ud [\widetilde M]_s\\
    &= \|q_M^{\frac 12} \Phi^*x^*\|^2_{L^2([0,t], [\widetilde M];H)}.
 \end{align*}
\end{proof}

Theorem \ref{thm:stochintelempredPhi} allows us to provide the following general stochastic integration result. Recall that a predictable process $\Phi:\mathbb R_+ \times \Omega \to \mathcal L(H,X)$ is called {\em strongly predictable} if there exists a sequence $(\Phi_n)_{n\geq 1}$ of elementary predictable $\mathcal L(H,X)$-valued processes such that $\Phi$ is a pointwise limit of $(\Phi_n)_{n\geq 1}$.

\begin{corollary}\label{cor:stochingenmardenfunction}
 Let $H$ be a Hilbert space, $X$ be a UMD Banach space, $\widetilde M:\mathbb R_+ \times \Omega \to H$ be a local martingale, $\Phi:\mathbb R_+ \times \Omega \to \mathcal L(H,X)$ be strongly predictable such that $\mathbb E \|\Phi q_{\widetilde M}^{1/2}\|_{\gamma(L^2(\mathbb R_+, [\widetilde M];H),X)}<\infty$. Then there exists a martingale $\Phi \cdot \widetilde M$ which coincides with the stochastic integral given $\Phi$ is elementary predictable such that
 \begin{equation}\label{eq:stochintwrtgenPhiweaksence}
  \langle \Phi \cdot \widetilde M, x^*\rangle = (\Phi^* x^*)\cdot \widetilde M,\;\;\; x^*\in X^*,
 \end{equation}
where the latter integral is defined as in Remark \ref{rem:X=Rstochintextension}. Moreover,  then we have that for any $1\leq p<\infty$
 \begin{equation}\label{eq:corgenstochintegration}
  \mathbb E \sup_{t\geq 0}\|(\Phi \cdot \widetilde M)_t\|^p \eqsim_{p, X} \mathbb E \|\Phi q_{\widetilde M}^{1/2}\|^p_{\gamma(L^2(\mathbb R_+, [\widetilde M];H),X)}.
 \end{equation}
\end{corollary}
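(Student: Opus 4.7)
The plan is to construct $\Phi\cdot\widetilde M$ by approximation and then transfer the It\^o isomorphism of Theorem \ref{thm:stochintelempredPhi} to the limit by an application of Theorem \ref{thm:BDGgeneralUMD} to the resulting martingale. First, since $\Phi$ is strongly predictable, there is a sequence $(\Psi_n)_{n\geq 1}$ of elementary predictable processes converging pointwise to $\Phi$. Using the finiteness assumption $\mathbb E\|\Phi q_{\widetilde M}^{1/2}\|_{\gamma(L^2(\mathbb R_+,[\widetilde M];H),X)}<\infty$ together with a suitable truncation (e.g.\ replacing $\Psi_n$ by $\Psi_n$ restricted to $\{\|\Psi_n\|\leq N\}$ for a growing $N=N(n)$) and the dominated convergence theorem for $\gamma$-norms from \cite[Section 9.4]{HNVW2}, one extracts elementary predictable $(\Phi_n)_{n\geq 1}$ with
\[
\mathbb E \|(\Phi-\Phi_n)q_{\widetilde M}^{1/2}\|_{\gamma(L^2(\mathbb R_+,[\widetilde M];H),X)}\longrightarrow 0,\qquad n\to\infty.
\]

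Next, for each elementary predictable $\Phi_n$, the integral $\Phi_n\cdot\widetilde M$ defined by \eqref{eq:defofstochintwrtM} is an $X$-valued martingale, and applying Theorem \ref{thm:stochintelempredPhi} with $p=1$ to the elementary predictable difference $\Phi_n-\Phi_m$ gives
\[
\mathbb E\sup_{t\geq 0}\|(\Phi_n\cdot\widetilde M)_t-(\Phi_m\cdot\widetilde M)_t\| \eqsim_X \mathbb E\|(\Phi_n-\Phi_m)q_{\widetilde M}^{1/2}\|_{\gamma(L^2(\mathbb R_+,[\widetilde M];H),X)}.
\]
The right-hand side tends to $0$ as $m,n\to\infty$, so $(\Phi_n\cdot\widetilde M)$ is Cauchy in the Banach space of $X$-valued martingales with norm $\mathbb E\sup_{t\geq 0}\|\cdot\|$. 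Define $\Phi\cdot\widetilde M$ to be its limit, which is then automatically a c\`adl\`ag martingale with $\mathbb E\sup_{t\geq 0}\|\Phi\cdot\widetilde M\|<\infty$.

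To verify \eqref{eq:stochintwrtgenPhiweaksence}, fix $x^*\in X^*$. The real-valued isomorphism of Remark \ref{rem:X=Rstochintextension} applied to $\Phi_n^*x^*-\Phi^*x^*$ shows $(\Phi_n^*x^*)\cdot\widetilde M\to(\Phi^*x^*)\cdot\widetilde M$ uniformly in $L^1$; on the other hand, by construction $\langle\Phi_n\cdot\widetilde M,x^*\rangle\to\langle\Phi\cdot\widetilde M,x^*\rangle$ in the same sense. Passing to the limit in the identity $\langle\Phi_n\cdot\widetilde M,x^*\rangle=(\Phi_n^*x^*)\cdot\widetilde M$ valid for elementary predictable $\Phi_n$ yields \eqref{eq:stochintwrtgenPhiweaksence}. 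Consequently, combining \eqref{eq:stochintwrtgenPhiweaksence} with \eqref{eq:whyq_Misinportant5element}, the covariation bilinear form of $\Phi\cdot\widetilde M$ is given by
\[
[\![\Phi\cdot\widetilde M]\!]_t(x^*,x^*) = \int_0^t \|q_{\widetilde M}^{1/2}(s)\Phi^*(s)x^*\|^2 \ud[\widetilde M]_s = \|q_{\widetilde M}^{1/2}\Phi^*x^*\|^2_{L^2([0,t],[\widetilde M];H)}.
\]
By Lemma \ref{lem:gammaofV=gammaofTfits} this gives $\gamma([\![\Phi\cdot\widetilde M]\!]_t)=\|\Phi q_{\widetilde M}^{1/2}\|_{\gamma(L^2([0,t],[\widetilde M];H),X)}$. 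Applying Theorem \ref{thm:BDGgeneralUMD} and letting $t\to\infty$ (using monotone convergence for both sides, which is legitimate by Proposition \ref{prop:cadlag[[M]]}) establishes \eqref{eq:corgenstochintegration} for every $1\leq p<\infty$.

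The main technical hurdle will be the initial approximation step, namely producing an elementary predictable sequence $(\Phi_n)$ approximating $\Phi$ in the mean $\gamma$-norm. Because the $\gamma$-norm is not an integrated pointwise quantity, one cannot simply invoke a standard dominated convergence theorem on $\mathbb R_+\times\Omega$; instead one combines a truncation reducing to bounded $\Phi_n$ (where the $\gamma$-ideal property and the assumed dominating function $\Phi q_{\widetilde M}^{1/2}$ provide an envelope) with the $\gamma$-norm dominated convergence results from \cite[Theorem 9.4.2]{HNVW2}. Everything else is a relatively clean density-plus-BDG argument.
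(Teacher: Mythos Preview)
Your overall strategy---approximate $\Phi$ by elementary predictable $\Phi_n$ in $L^1(\Omega;\gamma)$, pass to the limit using Theorem \ref{thm:stochintelempredPhi}, then identify the covariation of the limit and apply Theorem \ref{thm:BDGgeneralUMD}---is natural, and everything from the Cauchy argument onward is fine. The problem is precisely the step you flag as ``the main technical hurdle'': producing elementary predictable $\Phi_n$ with $\mathbb E\|(\Phi-\Phi_n)q_{\widetilde M}^{1/2}\|_{\gamma(L^2(\mathbb R_+,[\widetilde M];H),X)}\to 0$ from mere pointwise convergence $\Psi_n\to\Phi$. The truncation-plus-$\gamma$-dominated-convergence sketch does not close this gap. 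The $\gamma$-dominated convergence theorem \cite[Theorem 9.4.2]{HNVW2} requires, for each fixed $\omega$, a single operator $S(\omega)\in\gamma(L^2(\mathbb R_+,[\widetilde M];H),X)$ with $\|q_{\widetilde M}^{1/2}\Psi_n^*x^*\|_{L^2}\le\|S^*x^*\|_{L^2}$ for all $n$ and all $x^*$. Truncating $\Psi_n$ in operator norm only gives a bound of the form $C\|x^*\|$ times $\|q_{\widetilde M}^{1/2}\|_{L^2}$, which corresponds to a \emph{bounded} operator, not a $\gamma$-radonifying one, so the hypothesis of the $\gamma$-DCT is not met. (That this density is genuinely nontrivial is also suggested by the paper itself: in the UMD$^-$ proposition immediately following the corollary, the approximability in $L^p(\Omega;\gamma)$ is taken as an \emph{assumption}, not derived.)

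The paper sidesteps this entirely by approximating in the \emph{target space} rather than in the \emph{integrand}. It fixes an increasing sequence of finite-dimensional subspaces $Y_n\subset X^*$ with inclusions $P_n:Y_n\hookrightarrow X^*$, so that each $P_n^*\Phi$ takes values in the finite-dimensional space $Y_n^*$. There the $\gamma$-norm coincides with the Hilbert--Schmidt norm, and the integral $(P_n^*\Phi)\cdot\widetilde M$ is well-defined by the classical (Remark \ref{rem:X=R^dstochintextension}) theory without any density argument. These finite-dimensional integrals are compatible under the projections $P_{n,m}^*$, and a short lemma (Lemma \ref{lem:convergenceofY_n^*toXvaluedguy}) assembles them into a single $X$-valued process $Z$ with $P_n^*Z=(P_n^*\Phi)\cdot\widetilde M$. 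One then checks $Z$ is a martingale, identifies $[\langle Z,x^*\rangle]$ directly from the scalar theory, and invokes Theorem \ref{thm:BDGgeneralUMD}. The advantage is that no approximation of $\Phi$ in any $\gamma$-norm is ever needed; all limits are taken via monotone convergence of $\|P_n^*\cdot\|\nearrow\|\cdot\|$ and the ideal property of $\gamma$-norms.
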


For the proof we will need the following technical lemma.

\begin{lemma}\label{lem:convergenceofY_n^*toXvaluedguy}
 Let $X$ be a reflexive separable Banach space, $Y_1 \subset Y_2 \subset \ldots \subset Y_n \subset \ldots \subset X^*$ be finite dimensional subspaces such that $\overline{\cup_n Y_n} = X^*$. Let $P_n : Y_n\hookrightarrow X^*$, $n\geq 1$, and $P_{n,m}: Y_n\hookrightarrow Y_m$, $m\geq n\geq 1$, be the inclusion operators. For each $n\geq 1$ let $x_n\in Y_n^*$ be such that $P_{n,m}^* x_m = x_n$ for all $m\geq n\geq 1$. Assume also that $\sup_n \|x_n\| <\infty$. Then there exists $x\in X$ such that $P_{n}^* x = x_n$ for all $n\geq 1$ and $\|x\| = \lim_{n\to \infty} \|x_n\|$ monotonically.
\end{lemma}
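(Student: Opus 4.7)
The plan is to build $x$ as a bounded linear functional on $X^*$ obtained by patching the $x_n$ together, and then use reflexivity to identify it with an element of $X$. First I would observe that the sequence $(\|x_n\|)_{n\geq 1}$ is already nondecreasing: since $P_{n,m}:Y_n\hookrightarrow Y_m$ is a contractive inclusion, its adjoint $P_{n,m}^*$ is a contraction, whence $\|x_n\|=\|P_{n,m}^*x_m\|\leq \|x_m\|$. Combined with the hypothesis $\sup_n\|x_n\|<\infty$, this yields a finite limit $L:=\lim_n\|x_n\|=\sup_n\|x_n\|$, and moreover records the desired monotonicity.

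Next I would define a linear map $\phi$ on the (not necessarily closed) subspace $Y:=\bigcup_n Y_n\subset X^*$ by $\phi(y^*):=\langle y^*,x_n\rangle_{Y_n,Y_n^*}$ whenever $y^*\in Y_n$. The compatibility $P_{n,m}^*x_m=x_n$ makes this unambiguous: for $y^*\in Y_n\subset Y_m$,
\[
\langle y^*,x_m\rangle=\langle P_{n,m}y^*,x_m\rangle=\langle y^*,P_{n,m}^*x_m\rangle=\langle y^*,x_n\rangle.
\]
The estimate $|\phi(y^*)|\leq \|y^*\|\|x_n\|\leq L\|y^*\|$ (for $y^*\in Y_n$) shows $\phi$ is bounded on $Y$ with $\|\phi\|\leq L$, so since $\overline{Y}=X^*$ it extends uniquely to a bounded functional $\widetilde\phi\in X^{**}$ with $\|\widetilde\phi\|\leq L$. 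By reflexivity of $X$ there exists $x\in X$, with $\|x\|\leq L$, such that $\widetilde\phi(y^*)=\langle y^*,x\rangle$ for every $y^*\in X^*$.

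It then remains to verify the two required identities. For every $y^*\in Y_n$ one has
\[
\langle y^*,P_n^*x\rangle_{Y_n,Y_n^*}=\langle P_ny^*,x\rangle_{X^*,X}=\widetilde\phi(P_ny^*)=\phi(y^*)=\langle y^*,x_n\rangle,
\]
so $P_n^*x=x_n$. In particular $\|x_n\|=\|P_n^*x\|\leq \|x\|$ for every $n$, hence $L\leq\|x\|$; together with $\|x\|\leq L$ this gives $\|x\|=L=\lim_n\|x_n\|$, with the convergence monotone by the first step.

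The only nontrivial ingredient is the appeal to reflexivity: the natural home of the patched functional $\widetilde\phi$ is $(X^*)^*=X^{**}$, and it is reflexivity of $X$ that allows us to represent $\widetilde\phi$ by an element of $X$ itself. Separability of $X$ is not used in the argument.
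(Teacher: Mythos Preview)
Your proof is correct and follows essentially the same approach as the paper: define a bounded linear functional on $\bigcup_n Y_n$ by patching the $x_n$, extend by density to $X^{**}$, and invoke reflexivity to land in $X$. Your treatment of the norm equality is in fact slightly more explicit than the paper's, which simply appeals to the general fact that $\|P_n^*x\|\nearrow\|x\|$ for every $x\in X$.
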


\begin{proof}
 Set $C = \sup_{n}\|x_n\|$. First notice that $(x_n)_{n\geq 1}$ defines a bounded linear functional on $Y = \cup_n Y_n$. Indeed, fix $y\in Y_n$ for some fixed $n\geq 1$ (then automatically $y\in Y_m$ for any $m\geq n$). Define $\ell(y) = \langle x_n, y\rangle$. Then this definition of $\ell$ agrees for different $n$'s since for any $m\geq n$ we have that \
 $$
 \langle x_m, y_n\rangle = \langle x_m, P_{n,m}y_n\rangle = \langle P_{n,m}^* x_m, y_n\rangle = \langle x_n, y_n\rangle.
 $$
 Moreover, this linear functional is bounded since $|\langle x_n, y_n\rangle| \leq \|x_n\| \|y_n\| \leq C \|y_n\|$. So, it can be continuously extended to the whole space $X^*$. Since $X$ is reflexive, there exists $x\in X$ such that $\ell(x^*) = \langle x^*, x\rangle$ for any $x^*\in X^*$. Then for any fixed $n\geq 1$ and for any $y\in Y_n$ we have that
 \[
 \langle x_n, y \rangle = \ell(y) =  \langle x, y\rangle = \langle x, P_ny\rangle  = \langle P_n^*x, y\rangle ,
 \]
so $P_n^*x = x_n$. The latter follows from the fact that $\|P_n^* x\| \to \|x\|$ monotonically as $n\to \infty$ for any $x\in X$.
\end{proof}

\begin{proof}[Proof of Corollary \ref{cor:stochingenmardenfunction}]
 We will first consider the finite dimensional case and then deduce the infinite dimensional case.
 
 {\em Finite dimensional case.} Since $X$ is finite dimensional, it is isomorphic to a finite dimensional Euclidean space, and so the $\gamma$-norm is equivalent to the Hilbert-Schmidt norm (see e.g.\ \cite[Proposition 9.1.9]{HNVW2}). Then $\Phi$ is stochastically integrable with respect to $\widetilde M$ due to Remark \ref{rem:X=R^dstochintextension}, so \eqref{eq:stochintwrtgenPhiweaksence} clearly holds and we have that for any $x^* \in X^*$ a.s.\
 \[
  [\langle \Phi \cdot \widetilde M, x^*\rangle]_t = [(\Phi^*x^*)\cdot \widetilde M]_t = \int_0^t \|q_{\widetilde M}^{1/2}(s)\Phi^*(s) x^*\|^2 \ud [\widetilde M]_s,\;\;\; t\geq 0,
 \]
thus \eqref{eq:corgenstochintegration} follows from Theorem \ref{thm:BDGgeneralUMD} and Lemma \ref{lem:gammaofV=gammaofTfits}.
 
 {\em Infinite dimensional case.} Let now $X$ be general. Since $\Phi$ is strongly predictable, it takes values in a separable subspace of $X$, so we may assume that $X$ is separable. Since $X$ is UMD, it is reflexive, so $X^*$ is separable as well, and there exists a sequence $Y_1\subset Y_2 \subset \ldots\subset Y_n \subset\ldots \subset X^*$ of finite dimensional subsets of $X^*$ such that $\overline{\cup_n Y_n} = X^*$. For each $m\geq n\geq 1$ define inclusion operators $P_n:Y_n \hookrightarrow X^*$ and $P_{n,m}:Y_n \hookrightarrow Y_m$. Notice that by the ideal property \cite[Theorem 9.1.10]{HNVW2} $\mathbb E \|P_n^* \Phi q_{\widetilde M}^{1/2}\|_{\gamma(L^2(\mathbb R_+, [\widetilde M];H),Y_n^*)}<\infty$ for any $n\geq 1$, so since $Y_n^*$ is finite dimensional, the stochastic integral $(P_n^* \Phi) \cdot \widetilde M$ is well-defined by the case above and
 \begin{equation}\label{eq:Y_n^*valuedrestrictionofZmart}
    \mathbb E\sup_{t\geq 0} \bigl\|\bigl((P_n^* \Phi) \cdot \widetilde M\bigr)_t\bigr\| \eqsim_{X}\mathbb E \|P_n^* \Phi q_{\widetilde M}^{1/2}\|_{\gamma(L^2(\mathbb R_+, [\widetilde M];H),Y_n^*)},
 \end{equation}
 where the equivalence is independent of $n$ since $Y_n \subset X^*$ for all $n\geq 1$ and due to \cite[Proposition 4.2.17]{HNVW1} and Theorem \ref{thm:BDGgeneralUMD}.
Denote the stochastic integral $(P_n^* \Phi) \cdot \widetilde M$ by $Z^n$. Note that $Z^n$ is $Y_n^*$-valued, and since $P_{n,m}^* P_m^*\Phi = P_n^* \Phi$ for all $m\geq n\geq 1$, $P_{m,n}^* Z^m_t = Z^n_t$ a.s.\ for any $t\geq 0$. Therefore by Lemma \ref{lem:convergenceofY_n^*toXvaluedguy} there exists a process $Z:\mathbb R_+ \times \Omega \to X$ such that $P_n^*Z = Z^n$ for all $n\geq 1$. Let us show that $Z$ is integrable. Fix $t\geq 1$. Notice that by Lemma \ref{lem:convergenceofY_n^*toXvaluedguy} the limit $\|Z_t\| = \lim_{n\to \infty} \|P_n^* Z_t\| = \lim_{n\to \infty} \|Z^n_t\|$ is monotone, so by the monotone convergence theorem, \eqref{eq:Y_n^*valuedrestrictionofZmart}, and the ideal property \cite[Theorem 9.1.10]{HNVW2}
\begin{align*}
 \mathbb E \|Z_t\| = \lim_{n\to \infty} \mathbb E \|Z^n_t\|&\lesssim_X \limsup_{n\to \infty} \mathbb E \|P_n^* \Phi q_{\widetilde M}^{1/2}\|_{\gamma(L^2(\mathbb R_+, [\widetilde M];H),Y_n^*)}\\
 &\leq \mathbb E \|\Phi q_{\widetilde M}^{1/2}\|_{\gamma(L^2(\mathbb R_+, [\widetilde M];H),X)}.
\end{align*}
Now let us show that $Z$ is a martingale. Since $Z$ is integrable, due to \cite[Section 2.6]{HNVW1} it is sufficient to show that $\mathbb E (\langle Z_t, x^*\rangle|\mathcal F_s) = \langle Z_s, x^*\rangle$ for all $0\leq s\leq t$ for all $x^*$ from some dense subspace $Y$ of $X^*$. Set $Y = \cup_n Y_n$ and $x^* \in Y_n$ for some $n\geq 1$. Then for all $0\leq s\leq t$
\begin{align*}
  \mathbb E (\langle Z_t, x^*\rangle|\mathcal F_s)  &=\mathbb E (\langle Z_t, P_n x^*\rangle|\mathcal F_s)=\mathbb E (\langle P_n^* Z_t, x^*\rangle|\mathcal F_s)\\
  &=\mathbb E (\langle  Z^n_t, x^*\rangle|\mathcal F_s) = \langle Z^n_s, x^*\rangle= \langle Z_s, x^*\rangle,
\end{align*}
so $Z$ is a martingale. Finally, let us show \eqref{eq:corgenstochintegration}. First notice that for any $n\geq 1$ and $x^* \in Y_n \subset X^*$ a.s.\
\[
 [\langle Z, x^*\rangle]_t = [\langle Z^n, x^*\rangle]_t = \int_0^t \|q_{\widetilde M}^{1/2}(s)\Phi^*(s) x^*\|^2 \ud [\widetilde M]_s,\;\;\; t\geq 0;
\]
the same holds for a general $x^*\in X^*$ by a density argument. Then \eqref{eq:corgenstochintegration} follows from Theorem \ref{thm:BDGgeneralUMD} and Lemma \ref{lem:gammaofV=gammaofTfits}.
\end{proof}

\begin{remark}
As the reader can judge, the basic assumptions on $\Phi$ in Corollary \ref{cor:stochingenmardenfunction} can be weakened by a stopping time argument. Namely, one can assume that $\Phi q_{\widetilde M}^{1/2}$ is {\em locally} in $L^1(\Omega, \gamma(L^2(\mathbb R_+, [\widetilde M];H),X))$ (i.e.\ there exists an increasing sequence $(\tau_n)_{n\geq 1}$ of stopping times such that $\tau_n \to \infty$ a.s.\ as $n\to \infty$ and $\Phi q_{\widetilde M}^{1/2} \mathbf 1_{[0,\tau_n]}$ is in $L^1(\Omega, \gamma(L^2(\mathbb R_+, [\widetilde M];H),X))$ for all $n\geq 1$). Notice that such an assumption is a natural generalization of classical assumptions for stochastic integration in the real-valued case (see e.g.\ \cite[p.\ 526]{Kal}). 

In the case when $\widetilde M$ is continuous by a standard localization argument (since $t\mapsto \| \Phi q_{\widetilde M}^{1/2}\|_{\gamma(L^2([0, t], [\widetilde M];H),X)}$ is continuous) one can assume even a weaker assumption, namely that $\Phi q_{\widetilde M}^{1/2}$ is locally in $\gamma(L^2(\mathbb R_+, [\widetilde M];H),X)$,  see e.g.\ \cite{NVW,VY16,Kal}.
\end{remark}

In the theory of stochastic integration one might be interested in one-sided estimates. In the following proposition we show that such type of estimates is possible if $X$ satisfies the UMD$^-$ property (see Subsection \ref{subsec:onesided}).

\begin{proposition}
  Let $H$ be a Hilbert space, $X$ be a UMD$^-$ Banach space, $\widetilde M:\mathbb R_+ \times \Omega \to H$ be a local martingale, $1 \leq p<\infty$, $\Phi:\mathbb R_+ \times \Omega \to \mathcal L(H,X)$ be strongly predictable such that $\mathbb E \|\Phi q_{\widetilde M}^{1/2}\|_{\gamma(L^2(\mathbb R_+, [\widetilde M];H),X)}^p<\infty$ and such that there exists a sequence $(\Phi)_{n\geq 1}$ of elementary predictable $\mathcal L(H, X)$-valued processes such that
  \[
   \mathbb E \|(\Phi - \Phi_n) q_{\widetilde M}^{1/2}\|_{\gamma(L^2(\mathbb R_+, [\widetilde M];H),X)}^p \to 0,\;\;\; n\to \infty.
  \]
Then there exists an $L^p$-bounded martingale $\Phi \cdot \widetilde M$ as a strong $L^p$-limit of $(\Phi_n \cdot \widetilde M)_{n\geq 1}$, and we have that for any $1\leq p<\infty$
 \begin{equation}\label{eq:corgenstochintegrationUMD-}
  \mathbb E \sup_{t\geq 0}\|(\Phi \cdot \widetilde M)_t\|^p \lesssim_{p, X} \mathbb E \|\Phi q_{\widetilde M}^{1/2}\|^p_{\gamma(L^2(\mathbb R_+, [\widetilde M];H),X)}.
 \end{equation}
\end{proposition}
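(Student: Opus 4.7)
The strategy is the standard density-extension argument: first I would establish the one-sided upper bound \eqref{eq:corgenstochintegrationUMD-} for elementary predictable integrands, then use it as a Cauchy criterion to define $\Phi \cdot \widetilde M$ as the $L^p$-limit of $\Phi_n \cdot \widetilde M$, and finally pass the inequality to the limit.

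For an elementary predictable $\Psi$ of the form \eqref{eq:elprog}, the integral $\Psi \cdot \widetilde M$ takes values in the finite-dimensional subspace $X_0 := \mathrm{span}\{x_{kmn}\} \subseteq X$. Since the UMD$^{-}$ inequality is preserved under passage to a closed subspace with the same constant (directly from the definition: neither the norm of $X_0$-valued partial sums nor the martingale difference structure change under the isometric embedding), $\beta^-_{p, X_0} \leq \beta^-_{p, X}$. I would then run Step~1 of the proof of Theorem \ref{thm:BDGgeneralUMD} on $\Psi \cdot \widetilde M$ regarded as an $X_0$-valued martingale, but replacing the bilateral discrete estimate of Theorem \ref{thm:BDGgendiscmart} by the one-sided UMD$^{-}$ estimate of Theorem \ref{thm:upperboundsUMD-}. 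Concretely: for a sequence of partitions with mesh going to $0$, the discrete one-sided inequality yields $\mathbb E \sup_i \|(\Psi \cdot \widetilde M)_{t_i^N}\|^p \lesssim_{p,X} \mathbb E \gamma(V_N)^p$ with $V_N$ as in \eqref{eq:dfofV_NforageneralBDGgenBanachspace}; the $L^{p/2}$-convergence $\gamma(V_N) \to \gamma([\![\Psi \cdot \widetilde M]\!]_t)$ is handled exactly as in Step~1 of Theorem \ref{thm:BDGgeneralUMD} via Proposition \ref{prop:continuity+squreboundednessofgammaidukuryu} and Lemma \ref{lem:boundedbessofVbyagoodnorms} applied inside the finite-dimensional $X_0$; and the identification
\[
\gamma([\![\Psi \cdot \widetilde M]\!]_t) = \|\Psi q_{\widetilde M}^{1/2}\|_{\gamma(L^2([0,t],[\widetilde M];H),X)}
\]
follows from \eqref{eq:[[M]]forstochintr} and Lemma \ref{lem:gammaofV=gammaofTfits}. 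Because the only discrete inequality invoked is Theorem \ref{thm:upperboundsUMD-} and $\beta^-_{p, X_0} \leq \beta^-_{p, X}$, the resulting constant depends only on $p$ and $\beta^-_{p, X}$.

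With the elementary case in hand, the extension is routine. Applying the bound just established to the elementary predictable difference $\Phi_n - \Phi_m$ gives
\[
\mathbb E \sup_{t \geq 0} \|((\Phi_n - \Phi_m) \cdot \widetilde M)_t\|^p \lesssim_{p, X} \mathbb E \|(\Phi_n - \Phi_m) q_{\widetilde M}^{1/2}\|^p_{\gamma(L^2(\mathbb R_+, [\widetilde M]; H), X)},
\]
whose right-hand side tends to $0$ by hypothesis. Thus $(\Phi_n \cdot \widetilde M)_{n \geq 1}$ is Cauchy in the Banach space of càdlàg martingales endowed with the norm $N \mapsto (\mathbb E \sup_t \|N_t\|^p)^{1/p}$, and I define $\Phi \cdot \widetilde M$ to be the limit, which is then a càdlàg $L^p$-bounded martingale. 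Applying the elementary bound to each $\Phi_n$ and letting $n \to \infty$ yields \eqref{eq:corgenstochintegrationUMD-}: the left-hand side converges by construction and the right-hand side converges by hypothesis.

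\textbf{Main obstacle.} The only delicate step is Step~1. The remark preceding the proposition observes that the finite-dimensional approximation of $X^*$ used in the proof of Theorem \ref{thm:BDGgeneralUMD} cannot be repeated in the UMD$^{-}$ setting because $\beta^-_{p, Y_n^*}$ need not remain bounded as $Y_n \nearrow X^*$. The statement sidesteps this issue precisely because for elementary $\Psi$ the stochastic integral already lives in a fixed finite-dimensional subspace of $X$, so no approximation of $X^*$ is required, and the hereditariness of UMD$^{-}$ under passage to subspaces keeps all constants controlled by $\beta^-_{p, X}$. Everything beyond Step~1 is then a standard density and passage-to-the-limit argument.
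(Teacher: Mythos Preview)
Your proposal is correct and follows the same route as the paper, which simply states that the elementary case follows from Theorem~\ref{thm:upperboundsUMD-} and the general case from a limiting argument. You have spelled out in detail what the paper leaves implicit: that for an elementary integrand the stochastic integral lives in a fixed finite-dimensional subspace $X_0\subseteq X$, so Step~1 of Theorem~\ref{thm:BDGgeneralUMD} can be rerun there with the one-sided discrete estimate, and hereditariness of UMD$^-$ to subspaces keeps the constants uniform --- precisely the point that circumvents the obstruction noted in the remark preceding the proposition.
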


\begin{proof}
 Inequality \eqref{eq:corgenstochintegrationUMD-} for $\Phi = \Phi_n$ follows from Theorem \ref{thm:upperboundsUMD-}, while the proposition together with \eqref{eq:corgenstochintegrationUMD-} for a general $\Phi$ follows from a simple limiting argument.
\end{proof}

\subsection{It\^o isomorphism: Poisson and general random measures}\label{subsec:ItoisomPrmandgrm}

Let $(J, \mathcal J)$ be a measurable space, $N$ be a Poisson random measure on $J \times \mathbb R_+$, $\widetilde N$ be the corresponding compensated Poisson random measure (see e.g.\ \cite{Dirk14,Kal,Sato,KingPois,GY19} for details). Then by Theorem \ref{thm:BDGUMDpdmartingales} for any UMD Banach space $X$, for any $1\leq p<\infty$, and for any elementary predictable $F:J\times R_+ \times \Omega \to X$ we have that
\begin{equation}\label{eq:stochintwrttildeNofXvaluedguy}
  \mathbb E \sup_{0\leq s\leq t} \Bigl\|\int_{J\times [0,s]} F\ud \widetilde N\Bigr\|^p \eqsim_{p, X} \mathbb E \|F\|^p_{\gamma(L^2(J\times [0,t]; N), X)},\;\;\; t\geq 0.
\end{equation}

The same holds for a general quasi-left continuous random measure (see \cite{JS,DY17,Nov75,Mar13,Grig71,KalRM} for the definition and the details): if $\mu$ is a general quasi-left continuous random measure on $J \times \mathbb R_+$, $\nu$ is its compensator, and $\bar{\mu} := \mu-\nu$, then for any $1\leq p<\infty$
\begin{equation}\label{eq:stochintwrtbarmuofXvaluedguy}
  \mathbb E \sup_{0\leq s\leq t}\Bigl\|\int_{J\times [0,t]} F\ud  \bar{\mu}\Bigr\|^p \eqsim_{p, X} \mathbb E \|F\|^p_{\gamma(L^2(J\times [0,t]; \mu), X)},\;\;\; t\geq 0.
\end{equation}

The disadvantage of right-hand sides of \eqref{eq:stochintwrttildeNofXvaluedguy} and \eqref{eq:stochintwrtbarmuofXvaluedguy} is that both of them are not predictable and do not depend continuously on time a.s.\ on $\Omega$ (therefore they seem not to be useful from the SPDE's point of view since one may not produce a fixed point argument). For example, if $X = L^q$ for some $1<q<\infty$, then such predictable a.s.\ continuous in time right-hand sides do exist (see \cite{DY17,Dirk14}). For general UMD Banach spaces those can be provided so far only by decoupled tangent martingales, see \cite{Y19}.

\subsection{Necessity of the UMD property}\label{subsec:necofUMDgeneralrhs}

As it follows from Remark \ref{rem:nesofUMDforgenBDGwithgamma}, Theorem \ref{thm:BDGgeneralUMD} holds only in the UMD setting. The natural question is whether there exists an appropriate right-hand side of \eqref{eq:thmBDGgeneralUMD} in terms of $([\langle M, x^*\rangle,\langle M, y^*\rangle])_{x^*, y^*\in X^*}$ for some non-UMD Banach space $X$ and some $1\leq p<\infty$. Here we show that this is impossible.

Assume that for some Banach space $X$ and some $1\leq p<\infty$ there exists a function $G$ acting on families of stochastic processes parametrized by $X^*\times X^*$ (i.e.\ each family has the form $V = (V_{x^*, y^*})_{x^*, y^*\in X^*}$) taking values in $\mathbb R$ such that for any $X$-valued local martingale $M$ starting in zero we have that
\begin{equation}\label{eq:necofUMDgenLHS}
\mathbb E \sup_{t\geq 0} \|M_t\|^p \eqsim_{p, X} G([\![M]\!]),
\end{equation}
where we denote $[\![M]\!] = ([\langle M, x^*\rangle,\langle M, y^*\rangle])_{x^*, y^*\in X^*}$ for simplicity (note that the latter might not have a proper bilinear structure). Let us show that then $X$ must have the UMD property.

Fix any $X$-valued $L^p$-bounded martingale difference sequence $(d_n)_{n= 1}^N$ and any $\{-1,1\}$-valued sequence $(\eps_n)_{n= 1}^N$. Let $e_n:= \eps_nd_n$ for all $n=1,\ldots,N$. For every $x^*, y^*\in X^*$ define a stochastic process $V_{x^*,y^*}:\mathbb R_+ \times \Omega \to \mathbb R$ as
\[
V_{x^*}(t) = \sum_{n=1}^{N \wedge [t]} \langle d_n, x^*\rangle \cdot  \langle d_n, y^*\rangle =  \sum_{n=1}^{N \wedge [t]} \langle e_n, x^*\rangle \cdot  \langle e_n, y^*\rangle ,\;\;\; t\geq 0
\]
(recall that $[t]$ is the integer part of $t$). Let $V := (V_{x^*, y^*})_{x^*,y^*\in X^*}$. Then by \eqref{eq:necofUMDgenLHS}
\begin{equation}\label{eq:necofUMDgenLHSequivalenceoftransform}
\mathbb E \sup_{k\geq 0} \Bigl\|\sum_{n=1}^k d_n\Bigr\|^p\eqsim_{p, X} G(V) \eqsim_{p, X} \mathbb E \sup_{k\geq 0} \Bigl\|\sum_{n=1}^k e_n\Bigr\|^p.
\end{equation}
Since $N$,  $(d_n)_{n= 1}^N$, and $(\eps_n)_{n= 1}^N$ are general, \eqref{eq:necofUMDgenLHSequivalenceoftransform} implies that $X$ is a UMD Banach space (see the proof of Theorem \ref{thm:necofUMDfordiscBDGtango}).

\subsection{Martingale domination}\label{sec:martdom}

The next theorem shows that under some natural domination assumptions on martingales one gets $L^p$-estimates.

\begin{theorem}\label{thm:applicmartingdomina}
 Let $X$ be a UMD Banach space, $M,N:\mathbb R_+ \times \Omega \to X$ be local martingales such that $\|N_0\| \leq \|M_0\|$ a.s.\ and $[\langle N, x^*\rangle]_{\infty} \leq [\langle M, x^*\rangle]_{\infty}$ a.s.\ for all $x^*\in X^*$. Then for all $1\leq p<\infty$
 \begin{equation}\label{eq:morethanwds12stul'ev}
  \mathbb E \sup_{t\geq 0} \|N_{t}\|^p\lesssim_{p,X} \mathbb E \sup_{t\geq 0} \|M_{t}\|^p.
 \end{equation}
\end{theorem}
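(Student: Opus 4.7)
The plan is to apply Theorem \ref{thm:BDGgeneralUMD} to both $M$ and $N$ and reduce \eqref{eq:morethanwds12stul'ev} to a comparison of Gaussian characteristics of the covariation bilinear forms via Lemma \ref{lem:VgeqWthenVgammageqWgamma}. A standard splitting using $\mathbb E \sup_t \|N_t\|^p \eqsim_p \mathbb E \|N_0\|^p + \mathbb E \sup_t \|N_t - N_0\|^p$ (and similarly for $M$), together with the fact that subtracting the $\mathcal F_0$-measurable initial value leaves the coordinate quadratic variations unchanged, reduces matters to the case $M_0 = N_0 = 0$; we may also assume $\mathbb E \sup_t \|M_t\|^p <\infty$ since otherwise there is nothing to prove. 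Applying Theorem \ref{thm:BDGgeneralUMD} at each finite $t$ and monotonely sending $t\to\infty$ (the map $t\mapsto \gamma([\![M]\!]_t)$ is non-decreasing by Remark \ref{rem:basicpropsof[[M]]}(i) and Lemma \ref{lem:VgeqWthenVgammageqWgamma}) yields $\mathbb E \sup_t\|M_t\|^p \eqsim_{p,X} \mathbb E A^p$ with $A := \lim_{t\to \infty}\gamma([\![M]\!]_t) <\infty$ a.s. The uniform Lipschitz estimate \eqref{eq:contofVduetogammanormtikinula} then shows that the pointwise limit $[\![M]\!]_\infty(x^*, x^*) := [\langle M, x^*\rangle]_\infty$ extends to a continuous nonnegative bilinear form on $X^*\times X^*$, and Lemma \ref{lem:V_nmontoVthenthesameforgamma(V)} gives $A = \gamma([\![M]\!]_\infty)$.

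Next I upgrade the coordinate-wise hypothesis to a simultaneous bilinear-form inequality. Since $X$ is UMD and hence reflexive, $X^*$ is separable; fix a countable dense $(x_k^*)\subset X^*$. On a single full-measure set $\Omega_0$ the hypothesis gives $[\![N]\!]_t(x_k^*, x_k^*) = [\langle N, x_k^*\rangle]_t \leq [\langle N, x_k^*\rangle]_\infty \leq [\![M]\!]_\infty(x_k^*, x_k^*)$ for every $k$ and every rational $t$. Almost-sure continuity of the bilinear forms $[\![N]\!]_t$ and $[\![M]\!]_\infty$ in $x^*$, combined with density, extends this to all $x^*\in X^*$, and right-continuity of $t\mapsto [\![N]\!]_t$ (Proposition \ref{prop:cadlag[[M]]}) extends it to all $t\geq 0$. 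Lemma \ref{lem:VgeqWthenVgammageqWgamma} then yields $\gamma([\![N]\!]_t) \leq \gamma([\![M]\!]_\infty)$ almost surely, for every $t\geq 0$.

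A final application of Theorem \ref{thm:BDGgeneralUMD} to $N$ together with monotone convergence closes the proof:
\[
\mathbb E \sup_{t\geq 0}\|N_t\|^p = \lim_{t\to \infty}\mathbb E \sup_{0\leq s\leq t}\|N_s\|^p \eqsim_{p,X} \lim_{t\to\infty} \mathbb E \gamma([\![N]\!]_t)^p \leq \mathbb E \gamma([\![M]\!]_\infty)^p \eqsim_{p,X} \mathbb E\sup_{t\geq 0}\|M_t\|^p.
\]
The main technical obstacle is the density extension in the second paragraph: one has to combine separability of $X^*$, continuity of $[\![M]\!]_\infty$ (which itself requires the a priori $L^p$-control of $M$ to make sense of it as a genuine bilinear form rather than just a collection of values at fixed $x^*$), and c\`adl\`ag regularity of $[\![N]\!]$ in order to pass from the per-coordinate hypothesis at $t=\infty$ to a uniform bilinear-form domination at every $t\geq 0$.
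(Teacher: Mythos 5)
Your proposal is correct and follows essentially the same route as the paper's own (very terse) proof: reduce to $M_0=N_0=0$ via the triangle inequality, then combine Theorem \ref{thm:BDGgeneralUMD} with the monotonicity Lemma \ref{lem:VgeqWthenVgammageqWgamma}. The extra details you supply --- the single-null-set/density argument upgrading the per-coordinate hypothesis to a domination of bilinear forms, and the monotone passage $t\to\infty$ identifying $\gamma([\![M]\!]_\infty)$ --- are exactly the steps the paper leaves implicit in ``follows directly from''.
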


 Note that the assumptions in Theorem \ref{thm:applicmartingdomina} are a way more general than the 
{\em weak differential subordination} assumptions (recall that $N$ is weakly differentially subordinate to $M$ if $[\langle M, x^*\rangle] - [\langle N, x^*\rangle]$ is nondecreasing a.s.\ for any $x^*\in X^*$, see \cite{Y17FourUMD,Y17MartDec,OY18}), so Theorem \ref{thm:applicmartingdomina} significantly improves the $L^p$-bounds obtained previously for weakly differentially subordinated martingales in \cite{Y17FourUMD,Y17MartDec} and extends the results to the case $p=1$ as well.

\begin{proof}[Proof of Theorem \ref{thm:applicmartingdomina}]
First notice that by a triangular inequality
$$
\mathbb E \sup_{t\geq 0} \|M_{t}\|^p \eqsim_p \mathbb E \|M_0\|^p + \mathbb E \sup_{t\geq 0} \|M_{t} - M_0\|^p,
$$
$$
\mathbb E \sup_{t\geq 0} \|N_{t}\|^p \eqsim_p \mathbb E \|N_0\|^p + \mathbb E \sup_{t\geq 0} \|N_{t} - N_0\|^p.
$$
Consequently we can reduce the statement to the case $M_0=N_0=0$ a.s.\ (by setting $M := M-M_0$, $N:= N-N_0$), and
then the proof follows directly from Theorem \ref{thm:BDGgeneralUMD} and Lemma \ref{lem:VgeqWthenVgammageqWgamma}.
\end{proof}

\begin{remark}
 It is not known what the sharp constant is in \eqref{eq:morethanwds12stul'ev}. Nevertheless, sharp inequalities of such type have been discovered in the scalar case by Os\k{e}kowski in \cite{Os11b}. It was shown there that if $M$ and $N$ are real-valued $L^p$-bounded martingales such that a.s.
 \[
  [N]_t\leq [M]_t,\;\;\;t\geq 0, \;\;\;\; \text{if}\;1< p\leq 2,
 \]
  \[
  [N]_{\infty} - [N]_{t-}\leq [M]_{\infty} - [M]_{t-},\;\;\;t\geq 0, \;\;\;\; \text{if}\;2 \leq p <\infty,
 \]
 then
 \[
  (\mathbb E |N_{\infty}|^p)^{\frac 1p} \leq (p^*-1)(\mathbb E |M_{\infty}|^p)^{\frac 1p},\;\;\; 1<p<\infty,
 \]
where $p^* := \max\{p, \tfrac{p}{p-1}\}$.
\end{remark}

\subsection{Martingale approximations}

The current subsection is devoted to approximation of martingales. Namely, we will extend the following lemma by Weisz (see \cite[Theorem 6]{Weisz92}) to general UMD Banach space-valued martingales.

 \begin{lemma}\label{lem:approxofMbyL^2M^ninsuosense}
  Let $X$ be a finite dimensional Banach space, $M:\mathbb R_+ \times \Omega \to X$ be a martingale such that $\mathbb E \sup_{t\geq 0} \|M_t\|<\infty$. Then there exists a sequence $(M^n)_{n\geq 1}$ of $X$-valued uniformly bounded martingales such that $\mathbb E \sup_{t\geq 0}\|M_t-M^n_t\| \to 0$ as $n\to \infty$.
 \end{lemma}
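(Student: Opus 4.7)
The hypothesis $\mathbb E\sup_{t\geq 0}\|M_t\|<\infty$ makes $M$ uniformly integrable, hence $M_\infty:=\lim_{t\to\infty}M_t$ exists a.s.\ and in $L^1(\Omega;X)$, with $M_t=\mathbb E(M_\infty\mid\mathcal F_t)$. The naive truncation $\tilde M^n_t:=\mathbb E\bigl(M_\infty\mathbf 1_{\|M_\infty\|\leq n}\mid\mathcal F_t\bigr)$ produces a uniformly bounded martingale, but passing from $\mathbb E\|M_\infty-M_\infty\mathbf 1_{\|M_\infty\|\leq n}\|\to 0$ to the maximal convergence $\mathbb E\sup_t\|M_t-\tilde M^n_t\|\to 0$ would require an $L\log L$ bound on $M^*_\infty$ that is not assumed. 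The plan is therefore to reduce to the scalar case and combine a stopping-time argument with the Davis decomposition.

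Since $X$ is finite-dimensional, fix a basis and identify $X\cong\mathbb R^d$; then $\sup_t\|M_t\|$ is comparable to $\max_i\sup_t|M^i_t|$ where $M^i$ is the $i$-th coordinate, so at the cost of a dimension-dependent constant it suffices to construct the approximation for each scalar martingale $M^i$ separately. For a real-valued martingale $M$ with $M^*_\infty\in L^1$, apply Davis's decomposition $M=A+B$, where $A$ is a martingale with $|\Delta A_t|\leq 2 M^*_{t-}$ and $B$ is a martingale of integrable total variation satisfying $\mathbb E\int_0^\infty|dB_s|\lesssim\mathbb E M^*_\infty$.

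For the $A$-piece, stop at $\tau_n:=\inf\{t:|A_{t-}|\vee M^*_{t-}>n\}$: then $|A^{\tau_n}_t|\leq|A_{\tau_n-}|+|\Delta A_{\tau_n}|\leq n+2M^*_{\tau_n-}\leq 3n$, so $A^{\tau_n}$ is uniformly bounded; as $\tau_n\nearrow\infty$ a.s.\ and $A^*_\infty\leq M^*_\infty+B^*_\infty\in L^1$, dominated convergence gives $\mathbb E\sup_t|A_t-A^{\tau_n}_t|\leq 2\mathbb E A^*_\infty\mathbf 1_{\tau_n<\infty}\to 0$. For the $B$-piece, truncate its jumps at level $n$ (with the necessary compensation to preserve martingality) and then stop the truncated process when its absolute value first exceeds $n$; the resulting $B^n$ is uniformly bounded, and both error terms are controlled by dominated convergence on the finite positive measure $|dB_s|\,\mathbb P(d\omega)$. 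Setting $M^n:=A^{\tau_n}+B^n$ and reassembling coordinates yields the required uniformly bounded approximating sequence. The main obstacle is the $B$-piece: because only integrable (not bounded) total variation is available, one must combine the jump-truncation with a value-stopping argument and carefully verify that the compensation preserves the martingale property without reintroducing unboundedness.
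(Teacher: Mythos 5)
The paper offers no proof of this lemma: it is quoted from Weisz \cite[Theorem 6]{Weisz92}, so there is no in-paper argument to compare against. Your route --- coordinatewise reduction, Davis decomposition $M=A+B$, stopping the part with dominated jumps, truncating and re-stopping the integrable-variation part --- is the standard way to prove density of bounded martingales in the martingale Hardy space $\mathcal H^1$, and the $A$-piece is handled correctly (modulo constants, see below). The step you explicitly defer on the $B$-piece is, however, genuinely the crux, and it should be closed rather than announced; fortunately it closes with a single observation. Put $J^n_t:=\sum_{s\le t}\Delta B_s\mathbf 1_{\{|\Delta B_s|>n\}}$; this has integrable variation (dominated by $\int_0^\infty|\ud B_s|$), so its compensator $\widetilde J^n$ exists and $B^n:=B-(J^n-\widetilde J^n)$ is a martingale with $\Delta B^n_t=\Delta B_t\mathbf 1_{\{|\Delta B_t|\le n\}}+\Delta\widetilde J^n_t$. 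The compensator can jump only at predictable times $\tau$, where, using that $\mathbb E(\Delta B_\tau\mid\mathcal F_{\tau-})=0$,
\[
\Delta\widetilde J^n_\tau=\mathbb E\bigl(\Delta B_\tau\mathbf 1_{\{|\Delta B_\tau|>n\}}\mid\mathcal F_{\tau-}\bigr)=-\mathbb E\bigl(\Delta B_\tau\mathbf 1_{\{|\Delta B_\tau|\le n\}}\mid\mathcal F_{\tau-}\bigr),
\]
so $|\Delta\widetilde J^n_\tau|\le n$, hence $|\Delta B^n|\le 2n$, and stopping $B^n$ at $\inf\{t:|B^n_t|>n\}$ yields a martingale bounded by $3n$. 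Both error terms then vanish as you indicate: $\mathbb E\sup_t|J^n_t-\widetilde J^n_t|\le 2\,\mathbb E\sum_s|\Delta B_s|\mathbf 1_{\{|\Delta B_s|>n\}}\to 0$ by dominated convergence, and the stopping error is dominated using $(B^n)^*_\infty\le B^*_\infty+2\int_0^\infty|\ud B_s|\in L^1$ uniformly in $n$.

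Two smaller corrections to the ingredients. First, in the Davis decomposition the jumps of $A=M-B$ are not merely the truncated jumps of $M$: since $B$ is the \emph{compensated} sum of the big jumps, $\Delta A$ also contains the compensator's jump, which by the same conditional-expectation computation as above is bounded by the truncation level; the correct bound is $|\Delta A_t|\le c\,M^*_{t-}$ with $c=4$ (or larger, depending on the threshold), not $2$, which only changes your $3n$ to a larger multiple of $n$. Second, with the threshold $|\Delta M_s|>2M^*_{s-}$ the pathwise bound $\sum_s|\Delta M_s|\mathbf 1_{\{|\Delta M_s|>2M^*_{s-}\}}\lesssim M^*_\infty$ does not quite come out; one should either raise the threshold to $4M^*_{s-}$ or, as in Davis's original argument, threshold against $\sup_{s<t}|\Delta M_s|$, in order to obtain $\mathbb E\int_0^\infty|\ud B_s|\lesssim\mathbb E M^*_\infty$. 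Neither point affects the structure of your argument, which, once these verifications are written out, gives a complete proof.
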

 
  Here is the main theorem of the current subsection.

\begin{theorem}\label{thm:WeisztogenUMD}
 Let $X$ be a UMD Banach space, $1\leq p<\infty$, $M:\mathbb R_+ \times \Omega \to X$ be a martingale such that $\mathbb E \sup_{t\geq 0} \|M_t\|^p <\infty$. Then there exists a sequence $(M^n)_{n\geq 1}$ of $X$-valued $L^{\infty}$-bounded martingales such that $\mathbb E \sup_{t\geq 0}\|M_t-M_t^n\|^p \to 0$ as $n\to \infty$.
\end{theorem}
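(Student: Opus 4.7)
My plan is to construct the approximating $L^{\infty}$-bounded martingales via conditional expectation of a truncation of $M_{\infty}$, verifying convergence with Doob's maximal inequality when $p>1$ and by reducing to Lemma~\ref{lem:approxofMbyL^2M^ninsuosense} when $p=1$.

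Since $M$ takes values in a separable subspace of $X$, I may assume $X$ is separable. The hypothesis $\mathbb{E}(M^*_\infty)^p<\infty$ with $M^*_\infty:=\sup_{t\geq 0}\|M_t\|$, together with the domination $\|M_t\|\leq M^*_\infty$, shows that $(M_t)_{t\geq 0}$ is uniformly integrable and hence converges almost surely and in $L^1$ to some $M_\infty\in L^p(\Omega;X)$ (and in $L^p$ if $p>1$). Set
\[
M^n_t:=\mathbb{E}\bigl(M_\infty\,\mathbf{1}_{\{M^*_\infty\leq n\}}\bigm|\mathcal{F}_t\bigr).
\]
The conditional Jensen inequality gives $\|M^n_t\|\leq\mathbb{E}(M^*_\infty\mathbf{1}_{\{M^*_\infty\leq n\}}|\mathcal{F}_t)\leq n$ a.s.; after passing to a c\`adl\`ag version this holds uniformly in $t$, so each $M^n$ is $L^\infty$-bounded. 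The residual martingale $N^n:=M-M^n$ has terminal value $N^n_\infty=M_\infty\mathbf{1}_{\{M^*_\infty>n\}}$ with $\|N^n_\infty\|\leq M^*_\infty\mathbf{1}_{\{M^*_\infty>n\}}$.

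For $p>1$, Doob's maximal inequality~\eqref{eq:DoobsineqXBanach} applied to the convex submartingale $\|N^n\|^p$ yields
\[
\mathbb{E}\sup_{t\geq 0}\|M_t-M^n_t\|^p\lesssim_p\mathbb{E}\|N^n_\infty\|^p\leq\mathbb{E}(M^*_\infty)^p\mathbf{1}_{\{M^*_\infty>n\}}\xrightarrow{n\to\infty}0
\]
by dominated convergence, which finishes the proof. For $p=1$ Doob's $L^1\!\to\!L^1$ estimate is unavailable, so one must approximate $M$ first in the maximal-$L^1$ norm by finite-dimensional-valued martingales and then apply Lemma~\ref{lem:approxofMbyL^2M^ninsuosense} to each finite-dimensional piece. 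Concretely, take bounded finite-rank projections $\pi_k:X\to X_k$ onto an increasing family of finite-dimensional subspaces $X_k\subset X$ with $\overline{\cup_k X_k}=X$ and $\pi_k^*\to I$ pointwise on $X^*$. The martingale $\pi_k M$ is $X_k$-valued, and the identity $\langle M-\pi_kM,x^*\rangle=\langle M,(I-\pi_k^*)x^*\rangle$ combined with \eqref{eq:contofVduetogammanormtikinula} gives $[\![M-\pi_kM]\!]_\infty(x^*,x^*)\leq\|(I-\pi_k^*)x^*\|^2\gamma([\![M]\!]_\infty)^2\to 0$ pointwise in $x^*$. Using the monotone-convergence properties of the Gaussian characteristic from Section~\ref{sec:Gaussiancharacteristic} together with Theorem~\ref{thm:BDGgeneralUMD} at $p=1$, this yields $\mathbb{E}\sup_t\|M_t-\pi_kM_t\|\to 0$. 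A diagonal argument applied to Lemma~\ref{lem:approxofMbyL^2M^ninsuosense} for each $\pi_kM$ then produces the required sequence.

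The main obstacle is the $p=1$ case: the $L^1$-maximal function of a martingale is not controlled by its terminal value, so direct truncation fails; one must reduce to finite dimensions and invoke Lemma~\ref{lem:approxofMbyL^2M^ninsuosense}, and the maximal-$L^1$ approximation of $M$ by finite-dimensional-valued martingales itself relies on Theorem~\ref{thm:BDGgeneralUMD} at $p=1$ and the analytic properties of $\gamma(\cdot)$ developed in Section~\ref{sec:Gaussiancharacteristic}, as well as on the availability of suitable bounded finite-rank projections (which may in general have to be replaced by a Davis-type decomposition of $M$).
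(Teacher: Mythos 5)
Your argument for $p>1$ is correct and is precisely the reduction the paper itself describes as easy: truncating the terminal value and applying Doob's inequality \eqref{eq:DoobsineqXBanach} settles that case. The substance of the theorem is $p=1$, and there your sketch has a genuine gap. You assume a sequence of uniformly bounded finite-rank projections $\pi_k:X\to X_k$ onto finite-dimensional subspaces with $\overline{\cup_k X_k}=X$ and $\pi_k^*\to I$ pointwise. This is essentially a finite-dimensional Schauder decomposition and in particular implies the bounded approximation property, which a general separable UMD space need not have (UMD passes to closed subspaces, and there are subspaces of $\ell^q$, $q\neq 2$, failing the approximation property). The paper's closing remark after the theorem isolates exactly the Schauder-basis case as the ``clear'' one; the proof given in the paper deliberately avoids this hypothesis. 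Moreover, even granting such projections, your deduction that $\gamma([\![M-\pi_kM]\!]_\infty)\to 0$ does not follow from the convergence results of Section \ref{sec:Gaussiancharacteristic}: Lemma \ref{lem:V_nmonto0thenthesamefor0} requires the pointwise convergence $V_k(x^*,x^*)\to 0$ to be \emph{monotone}, and $[\![M]\!]_\infty\bigl((I-\pi_k^*)x^*,(I-\pi_k^*)x^*\bigr)$ is not monotone in $k$; a $\gamma$-dominated convergence substitute would require a single $\gamma$-radonifying dominating operator, and the bound by $\|(I-\pi_k^*)x^*\|\,\gamma([\![M]\!]_\infty)\lesssim \|x^*\|$ does not supply one.

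The paper's route is entirely different and does the real work elsewhere. It splits $M=M^c+M^q+M^a$ by the canonical decomposition, with the $L^1$-maximal bounds \eqref{eq:camdecLpbounforpgeq1} for the three parts supplied by the martingale domination Theorem \ref{thm:applicmartingdomina} (itself a consequence of Theorem \ref{thm:BDGgeneralUMD}). The continuous part is locally bounded, hence trivially approximable. The purely discontinuous quasi-left continuous part is approximated by truncating its jumps through the compensated random measure $\bar\mu^M$ (Proposition \ref{prop:approxofpdqlcmartbyjumpsrukarukakakaka} and Theorem \ref{thm:approxofpdqlcmartbywhatyouneed}, whose convergence proof uses Theorem \ref{thm:BDGgeneralUMD} at $p=1$ together with the \emph{monotone} vanishing of the residual covariation, where Lemma \ref{lem:V_nmonto0thenthesamefor0} genuinely applies). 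The part with accessible jumps is reduced to a discrete martingale difference sequence indexed by finitely many predictable stopping times (Theorem \ref{thm:approxofpdmartwajsig}) and truncated there. If you want to salvage your approach you would have to replace the finite-rank projections by such a structural decomposition of the martingale itself, which is exactly what the paper does.
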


Though this theorem easily follows from Doob's maximal inequality \eqref{eq:DoobsineqXBanach} in the case $p>1$, the case $p=1$ (which is the most important one for the main application of Theorem \ref{thm:WeisztogenUMD}, Theorem \ref{thm:BDGforBFSpgeq1}) remains problematic and requires some work.

For the proof of the theorem we will need to find its analogues for purely discontinuous martingales. Let us first recall some definitions.

A random variable $\tau:\Omega \to \mathbb R_+$ is called an {\em optional stopping time} (or just a {\em stopping time}) if $\{\tau\leq t\} \in \mathcal F_t$ for each $t\geq 0$. With an optional stopping time $\tau$ we associate a $\sigma$-field $\mathcal F_{\tau} = \{A\in \mathcal F_{\infty}: A\cap \{\tau\leq t\}\in \mathcal F_{t}, t\in\mathbb R_+\}$. Note that $M_{\tau}$ is strongly $\mathcal F_{\tau}$-measurable for any local martingale $M$. We refer to \cite[Chapter 7]{Kal} for details.

Recall that due to the existence of a c\`adl\`ag version of a martingale $M:\mathbb R_+ \times \Omega \to X$, we can define an $X$-valued random variables $M_{\tau-}$ and $\Delta M_{\tau}$ for any stopping time $\tau$ in the following way: $M_{\tau-} = \lim_{\eps \to 0}M_{(\tau - \eps)\vee 0}$, $\Delta M_{\tau} = M_{\tau} -  M_{\tau-}$.

A stopping time $\tau$ is called {\em predictable} if there exists a sequence of stopping times $(\tau_n)_{n\geq 1}$ such that $\tau_n<\tau$ a.s.\ on $\{\tau>0\}$ for each $n\geq 1$ and $\tau_n \to \tau$ monotonically a.s.
A stopping time $\tau$ is called {\em totally inaccessible} if $\mathbb P\{\tau = \sigma < \infty\} =0$ for each predictable stopping time $\sigma$.

\begin{definition}
 Let $X$ be a Banach space, $M:\mathbb R_+ \times \Omega \to X$ be a local martingale. Then $M$ is called {\em quasi-left continuous} if $\Delta M_{\tau} = 0$ a.s.\ for any predictable stopping time $\tau$. $M$ is called to have {\em accessible jumps} if $\Delta M_{\tau}=0$ a.s.\ for any totally inaccessible stopping time.
\end{definition}

The reader can find more information on quasi-left continuous martingales and martingales with accessible jumps in \cite{Kal,Y17MartDec,DY17,JS,Y17GMY}. 

In order to prove Theorem \ref{thm:WeisztogenUMD} we will need to show similar approximation results for quasi-left continuous purely discontinuous martingales and purely discontinuous martingales with accessible jumps. Both cases will be considered separately.

\subsubsection{Quasi-left continuous purely discontinuous martingales}

Before stating the corresponding approximation theorem let us show the following proposition.

\begin{proposition}\label{prop:approxofpdqlcmartbyjumpsrukarukakakaka}
 Let $X$ be a Banach space, $M:\mathbb R_+ \times \Omega \to X$ be a purely discontinuous quasi-left continuous martingale. Then there exist sequences of positive numbers $(a_n)_{n\geq 1}$, $(b_n)_{n\geq 1}$, and a sequence of $X$-valued purely discontinuous quasi-left continuous martingales $(M^n)_{n\geq 1}$ such that
 $$
 \sup_t \|\Delta M^n_t\| \leq a_n,\;\;\;\#\{t\geq 0: \Delta M^n_t \neq 0\} \leq b_n \;\; \text{a.s.}\;\; \forall n\geq 1,
 $$
 \begin{equation}\label{eq:approxofpdqlcMnhasthesamejumpsasMm}
    \{t\geq 0: \Delta M^n_t \neq 0\} \subset  \{t\geq 0: \Delta M^m_t \neq 0\} \;\; \text{a.s.}\;\; \forall m\geq n\geq 1,
 \end{equation}
 \begin{equation}\label{eq:forproponapproxofpdqlcmartDelmaM^nsubsetDeltaM}
   \Delta M^n_t = \Delta M_t\;\;\; \forall t\geq 0 \;\; \text{s.t.}\;\; \Delta M^n_t\neq 0 \;\; \text{a.s.}\;\; \forall n\geq 1,
 \end{equation}
 and
 \begin{equation}\label{eq:approxpdqlcMncovesjumpspfM}
  \cup_{n\geq 1}\{t\geq 0: \Delta M^n_t \neq 0\} = \{t\geq 0: \Delta M_t \neq 0\}\;\;\text{a.s.}
 \end{equation}
\end{proposition}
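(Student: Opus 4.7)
The plan is to realize each $M^n$ as a stochastic integral of the identity map restricted to a carefully chosen predictable set $G_n\subset\mathbb R_+\times\Omega\times X$, against the compensated jump measure of $M$. The sets will be nested, supported in a region of deterministically bounded jump sizes, and will meet $\mu_M$ in at most a deterministic number of points; together they will sweep out every jump of $M$.

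Concretely, let $\mu_M$ denote the integer-valued jump measure of $M$ and $\nu_M$ its predictable compensator; since $M$ is quasi-left continuous, $\nu_M(\{t\}\times X)=0$ for every $t$. I stratify $X\setminus\{0\}$ by the dyadic annuli $A_j:=\{x\in X:2^{j-1}\leq\|x\|<2^j\}$, $j\in\mathbb Z$, and set $\Pi_j(t):=\mu_M([0,t]\times A_j)$. Because on every compact interval only finitely many jumps of $M$ exceed any fixed positive size a.s., $\Pi_j$ is a finite c\`adl\`ag adapted integer-valued process, and $(s,\omega)\mapsto\Pi_j(s-)(\omega)$ is predictable. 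For $n\geq 1$ I put
$$
G_n:=\bigcup_{|j|\leq n}\bigl\{(s,\omega,x):\,s\leq n,\ x\in A_j,\ \Pi_j(s-)(\omega)<n\bigr\},
$$
which is $\tilde{\mathcal P}$-measurable, satisfies $G_n\subset G_{n+1}$, carries the pointwise bound $\|x\|<2^n$, and meets $\mu_M$ in at most $(2n+1)n$ points a.s., since in each annulus $A_j$ with $|j|\leq n$ at most $n$ jumps of $M$ can satisfy $\Pi_j(\cdot-)<n$. Setting $a_n:=2^n$, $b_n:=(2n+1)n$ and
$$
M^n_t:=\int_0^t\int_X x\,\mathbf 1_{G_n}(s,x)\,(\mu_M-\nu_M)(ds,dx),
$$
the integrand has deterministic total variation with respect to $\mu_M$ bounded by $a_n b_n$, so $M^n$ is a well-defined purely discontinuous quasi-left continuous martingale with $\Delta M^n_t=\Delta M_t\,\mathbf 1_{G_n}(t,\Delta M_t)$. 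The required a.s.\ bounds on $\sup_t\|\Delta M^n_t\|$ and $\#\{t\geq 0:\Delta M^n_t\ne 0\}$, as well as \eqref{eq:approxofpdqlcMnhasthesamejumpsasMm} and \eqref{eq:forproponapproxofpdqlcmartDelmaM^nsubsetDeltaM}, are then immediate; for \eqref{eq:approxpdqlcMncovesjumpspfM} one notes that every jump $(\tau,\Delta M_\tau)$ belongs to $G_n$ as soon as $n$ exceeds $\tau$, $|j|$ for the unique $j$ with $\Delta M_\tau\in A_j$, and $\Pi_j(\tau-)$ (which is a.s.\ finite).

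The main obstacle is that the four requirements---predictability of $G_n$, a deterministic jump-size bound, a deterministic jump-count bound, and monotone nesting---are each easy in isolation but hard to reconcile simultaneously: e.g.\ stopping at the $n$-th jump of size $\geq 1/n$ delivers the count and size bounds but destroys the nesting in $n$. The dyadic stratification together with the left-continuous per-annulus counts $\Pi_j(s-)$ decouples the size constraint from the count constraint and makes all four conditions compatible, after which the remaining martingale and integrability properties are a routine consequence of the theory of stochastic integration against $\mu_M-\nu_M$ developed in Subsection \ref{subsec:ItoisomPrmandgrm}.
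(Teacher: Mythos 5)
Your proof is correct, and it follows the same overall strategy as the paper's: realize $M^n$ as the stochastic integral of $(s,x)\mapsto \mathbf 1_{A_n}(s,x)\,x$ against the compensated jump measure $\bar\mu^M=\mu^M-\nu^M$ over an increasing sequence of predictable sets $A_n$ that simultaneously truncate the jump size and the jump count, with the deterministic bound $\int \|x\mathbf 1_{A_n}\|\ud\mu^M\le a_nb_n$ guaranteeing that the integral is a genuine martingale, and quasi-left continuity (non-atomicity of $\nu^M$ in time) giving $\Delta M^n_t=\Delta M_t\,\mathbf 1_{A_n}(t,\Delta M_t)$, from which all four listed properties follow. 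Where you differ is in the construction of the truncating sets: the paper takes $A_n=[0,\tau_n]\times\{x:\|x\|\in[1/a_n,a_n]\}$ with stopping times $\tau_n\uparrow\infty$ imported from the proof of \cite[Lemma 5.20]{DY17}, which are chosen there precisely so that the number of jumps of size in $[1/a_n,a_n]$ before $\tau_n$ is at most $b_n$; you instead stratify $X\setminus\{0\}$ into dyadic annuli and cut off each annulus after its first $n$ jumps via the predictable processes $\Pi_j(\cdot-)$, together with the deterministic time cutoff $s\le n$. Your variant is self-contained (no appeal to the external lemma for the stopping times) and makes the nesting $G_n\subset G_{n+1}$ and the covering property \eqref{eq:approxpdqlcMncovesjumpspfM} completely transparent, at the cost of a slightly more elaborate predictable set; the paper's version is shorter because it delegates the count bound to \cite{DY17}. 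Both deliver identical conclusions, and your verification of predictability of $\Pi_j(\cdot-)$, of the per-annulus count bound (the times with $\Pi_j(s-)<n$ are exactly the first $n$ jumps in $A_j$), and of the integrability needed for the martingale property are all sound.
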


\begin{proof}[Sketch of the proof]
 The construction of such a family of martingales was essentially provided in the proof of \cite[Lemma 5.20]{DY17}. We will recall the construction here for the convenience of the reader. First of all we refer the reader to \cite{DY17,Kal,JS,KalRM} for the basic definitions and facts on {\em random measures}, which presenting we will omit here for the brevity and simplification of the proof. Let $\mu^M$ be a random measure defined on $(\mathbb R_+ \times X, \mathcal B(\mathbb R_+) \otimes \mathcal B(X))$ by
 \[
  \mu^M(A\times B) = \sum_{t\in A} \mathbf 1_{\Delta M_t \in B\setminus\{0\}},\;\;\; A\in \mathcal B(\mathbb R_+), B\in \mathcal B(X).
 \]
Let $\nu^M$ be the corresponding compensator, $\bar{\mu}^M := \mu^M - \nu^M$. Due to the proof of \cite[Lemma 5.20]{DY17} there exists an a.s.\ increasing sequence $(\tau_n)_{n\geq 1}$ of stopping times such $\tau_n \to \infty$ a.s.\ as $n\to \infty$, and such = that there exist positive sequences $(a_n)_{n\geq 1}$, $(b_n)_{n\geq 1}$ with $(a_n)_{n\geq 1}$ being increasing natural and with
\[
 \#\{t\geq 0: \|\Delta M^{\tau_n}_t\| \in [1/a_n, a_n]\} \leq b_n.
\]
Define a predictable set $A_n := [0, \tau_n]\times B_n \subset \mathbb R_+ \times X$, where $B_n := \{x\in X: \|x\|\in [1/a_n, a_n]\}$. Then the desired $M^n$ equals the stochastic integral 
$$
M^n_t := \int_{[0,t]\times X}\mathbf 1_{A_n}(s, x) x \ud \bar{\mu}^M(\dd s, \dd x),\;\;\;\; t\geq 0,
$$
where the latter is a well-defined martingale since by \cite[Subsection 5.4]{DY17} it is sufficient to check that for any $t\geq 0$
\begin{multline*}
  \int_{[0,t]\times X}\|\mathbf 1_{A_n}(s, x) x \|\ud {\mu}^M(\dd s, \dd x) = \int_{A_n \cap[0,t]\times X}\| x \|\ud {\mu}^M(\dd s, \dd x)\\
  = \sum_{t\in[0,\tau_n\wedge t]} \|\Delta M^{\tau_n}_t\| \mathbf 1_{\Delta M^{\tau_n}_t \in [1/a_n, a_n]} \leq a_nb_n<\infty.
\end{multline*}
All the properties of the sequence $(M^n)_{n\geq 1}$ then follow from the construction, namely from the fact that $A_n$ are a.s.\ increasing with $\cup_n A_n = \mathbb R_+ \times X \setminus \{0\}$ a.s., and the fact that $\nu^M$ is non-atomic in time since $M$ is quasi-left continuous (see \cite[Subsection 5.4]{DY17}). 
\end{proof}

In the next theorem we show that the martingales obtained in Proposition \ref{prop:approxofpdqlcmartbyjumpsrukarukakakaka} approximate $M$ in the strong $L^p$-sense.

\begin{theorem}\label{thm:approxofpdqlcmartbywhatyouneed}
 Let $X$ be a UMD Banach space, $M$ be an $X$-valued martingale, $(M^n)_{n\geq 1}$ be a sequence of $X$-valued martingales constructed in Proposition \ref{prop:approxofpdqlcmartbyjumpsrukarukakakaka}. Assume that for some fixed $1\leq p<\infty$, $\mathbb E \sup_{t\geq 0} \|M_t\|^p <\infty$. Then $\mathbb E \sup_{t\geq 0} \|M^n_t\|^p <\infty$ for all $n\geq 1$ and
 \[
  \mathbb E \sup_{t\geq 0} \|M_t- M^n_t\|^p \to 0,\;\; \; n\to \infty.
 \]
\end{theorem}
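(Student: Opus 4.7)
The plan is to reduce the statement to the purely discontinuous Burkholder-Davis-Gundy inequality from Theorem \ref{thm:BDGUMDpdmartingales}. By hypothesis $M$ is purely discontinuous quasi-left continuous (this is implicit in Proposition \ref{prop:approxofpdqlcmartbyjumpsrukarukakakaka}), and by construction each $M^n$ shares these properties; since purely discontinuous martingales form a vector space, $M-M^n$ is again purely discontinuous. Letting $t\to\infty$ in Theorem \ref{thm:BDGUMDpdmartingales} (the right-hand side is monotone in $t$) gives
\[
  \mathbb E \sup_{t\geq 0} \|M_t - M^n_t\|^p \eqsim_{p, X} \mathbb E \|(\Delta M_t - \Delta M^n_t)_{t\geq 0}\|_{\gamma(\ell^2(\mathbb R_+), X)}^p,
\]
and analogously with $M^n$ in place of $M-M^n$. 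Thus the problem reduces to controlling the Gaussian-radonifying norm of the jump sequence of $M-M^n$.

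By \eqref{eq:forproponapproxofpdqlcmartDelmaM^nsubsetDeltaM} we have $\Delta M^n_t \in \{0, \Delta M_t\}$ for every $t\geq 0$ a.s., so
\[
  \Delta M_t - \Delta M^n_t = \mathbf 1_{\{\Delta M^n_t = 0\}}\Delta M_t,\;\;\; t\geq 0, \;\;\text{a.s.}
\]
The ideal property of $\gamma$-norms \cite[Theorem 9.1.10]{HNVW2} then yields the a.s.\ pointwise domination
\[
  \|(\Delta M_t - \Delta M^n_t)_{t\geq 0}\|_{\gamma(\ell^2(\mathbb R_+), X)} \leq \|(\Delta M_t)_{t\geq 0}\|_{\gamma(\ell^2(\mathbb R_+), X)},
\]
and the right-hand side is in $L^p(\Omega)$ by Theorem \ref{thm:BDGUMDpdmartingales} applied to $M$ and the hypothesis $\mathbb E\sup_{t\geq 0}\|M_t\|^p<\infty$. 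The same bound with $\Delta M^n_t$ in place of $\Delta M_t - \Delta M^n_t$ gives $\mathbb E\sup_{t\geq 0}\|M^n_t\|^p<\infty$, which is the first claim of the theorem.

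For the $L^p$-convergence, property \eqref{eq:approxpdqlcMncovesjumpspfM} ensures that for almost every $\omega$ and every jump time $t$ of $M$ there exists $n_0=n_0(\omega,t)$ with $\Delta M^n_t = \Delta M_t$ for $n\geq n_0$, so $\Delta M_t - \Delta M^n_t\to 0$ pointwise in $t$ a.s. The dominated convergence theorem for $\gamma$-norms \cite[Theorem 9.4.2]{HNVW2} (applied on the a.s.\ separable subspace of $\ell^2(\mathbb R_+)$ supported on the countable jump set of $M$) upgrades this to $\|(\Delta M_t - \Delta M^n_t)_{t\geq 0}\|_{\gamma(\ell^2(\mathbb R_+), X)}\to 0$ a.s. Combining with the $L^p$-integrable majorant and ordinary dominated convergence on $\Omega$ produces $\mathbb E\|(\Delta M_t - \Delta M^n_t)_{t\geq 0}\|_{\gamma}^p\to 0$, which by the BDG equivalence above yields the theorem.

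The main technical point is precisely the passage from pointwise-in-$t$ convergence of the jumps to convergence of the operator $\gamma$-norm. The nonseparability of $\ell^2(\mathbb R_+)$ is harmless because all relevant operators factor through the countable set of jump times of $M$, and the ideal-property bound furnishes exactly the $L^p$-integrable majorant that the $\gamma$-norm dominated convergence theorem requires.
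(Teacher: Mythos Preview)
Your proof is correct and follows essentially the same route as the paper: both reduce to the purely discontinuous BDG inequality, use that the jumps of $M-M^n$ are a subfamily of the jumps of $M$ to obtain domination, and then invoke a $\gamma$-convergence theorem together with dominated/monotone convergence on $\Omega$. The only cosmetic difference is that the paper phrases the convergence step through the covariation bilinear forms and Lemma~\ref{lem:V_nmonto0thenthesamefor0} (itself proved via \cite[Theorem 9.4.2]{HNVW2}), whereas you work directly with the $\gamma$-radonifying operators and cite \cite[Theorem 9.4.2]{HNVW2}; these are the same argument in different packaging.
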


\begin{proof}
 First of all notice that by Theorem \ref{thm:BDGUMDpdmartingales}, \eqref{eq:forproponapproxofpdqlcmartDelmaM^nsubsetDeltaM}, and \cite[Proposition 6.1.5]{HNVW2} for any $n\geq 1$
 \begin{multline*}
    \mathbb E \sup_{t\geq 0} \|M^n_t\|^p \eqsim_{p, X} \mathbb E \Bigl(\mathbb E_{\gamma}\Bigl\|\sum_{t\geq 0} \gamma_s \Delta M^n_s\Bigr\|^2\Bigr)^{\frac p2}\\
    \leq \mathbb E \Bigl(\mathbb E_{\gamma}\Bigl\|\sum_{t\geq 0} \gamma_s \Delta M_s\Bigr\|^2\Bigr)^{\frac p2} \eqsim_{p, X}\mathbb E \sup_{t\geq 0} \|M_t\|^p.
 \end{multline*}
Let us show the second part of the theorem. Note that by \eqref{eq:forproponapproxofpdqlcmartDelmaM^nsubsetDeltaM} a.s.\ for all $x^*\in X^*$
\[
 [\![M - M^n]\!]_{\infty}(x^*, x^*) = \sum_{t\geq 0} \langle \Delta M_t, x^*\rangle^2 \mathbf 1_{\Delta M_t\neq \Delta M^n_t},
\]
which monotonically vanishes as $n\to \infty$ by \eqref{eq:approxofpdqlcMnhasthesamejumpsasMm} and \eqref{eq:approxpdqlcMncovesjumpspfM}.
Consequently, the desired follows form Theorem \ref{thm:BDGgeneralUMD}, Lemma \ref{lem:V_nmonto0thenthesamefor0}, and the monotone convenience theorem.
\end{proof}

\subsubsection{Purely discontinuous martingales with accessible jumps}

Now let us turn to purely discontinuous martingales with accessible jumps. First notice that by \cite[Proposition 25.4]{Kal}, \cite[Theorem 25.14]{Kal}, and by \cite[Subsection 5.3]{DY17} the following lemmas hold.

\begin{lemma}
 Let $X$ be a Banach space, $M:\mathbb R_+ \times \Omega \to X$ be a local martingale with accessible jumps. Then there exists a set $(\tau_n)_{n\geq 0}$ of predictable stopping times with disjoint graphs (i.e.\ $\tau_n\neq \tau_m$ a.s.\ for all $m\neq n$) such that a.s.\
 \begin{equation}\label{eq:DeltaMareintaunonly}
   \{t\geq 0: \Delta M_{t} \neq 0\} \subset \{\tau_1, \tau_2, \ldots, \tau_n,\ldots\}.
 \end{equation}
\end{lemma}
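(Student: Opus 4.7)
The plan is to leverage the standard exhaustion of jumps of a c\`adl\`ag process by stopping times, combined with the decomposition of an accessible stopping time into a countable family of predictable ones. Since $M$ is $X$-valued c\`adl\`ag, the real-valued process $t\mapsto \|\Delta M_t\|$ is well-defined, and for every $k\geq 1$ I can iteratively define
\[
\sigma_1^k := \inf\{t\geq 0 : \|\Delta M_t\| > 1/k\}, \qquad \sigma_{n+1}^k := \inf\{t> \sigma_n^k : \|\Delta M_t\| > 1/k\},
\]
which yields a countable family of stopping times (by right-continuity of the filtration) whose graphs exhaust $\{t\geq 0 : \|\Delta M_t\| > 1/k\}$. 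Taking the union over $k\geq 1$ produces a countable family $(\sigma_m)_{m\geq 1}$ of stopping times whose graphs cover $\{t\geq 0 : \Delta M_t \neq 0\}$ almost surely.

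Next I would invoke the classical decomposition of a general stopping time into its accessible and totally inaccessible parts (\cite[Proposition 25.4]{Kal}): each $\sigma_m$ can be written as $\sigma_m = \sigma_m^a \wedge \sigma_m^i$ on disjoint events, where $\sigma_m^a$ is accessible and $\sigma_m^i$ is totally inaccessible. Since $M$ has accessible jumps by hypothesis, $\Delta M_{\sigma_m^i} = 0$ a.s., so the jumps of $M$ are actually exhausted by the graphs of the accessible stopping times $(\sigma_m^a)_{m\geq 1}$. Then by the definition of accessibility, each $\sigma_m^a$ can be covered by a countable family of predictable stopping times (\cite[Theorem 25.14]{Kal}), and concatenating these families over $m$ yields a countable family $(\rho_\ell)_{\ell\geq 1}$ of predictable stopping times whose graphs contain the jump set of $M$.

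Finally, to obtain \emph{disjoint} graphs, I would apply a standard disjointification argument: inductively replace $\rho_\ell$ by its restriction to the event $\{\rho_\ell \neq \rho_j \text{ for all } j < \ell\} \cup \{\rho_\ell = \infty\}$ (setting it to $\infty$ elsewhere), which is still a predictable stopping time since predictability is preserved under such restrictions to predictable sets. After this procedure, relabeling the resulting family as $(\tau_n)_{n\geq 0}$ gives the desired sequence satisfying \eqref{eq:DeltaMareintaunonly} with pairwise disjoint graphs.

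The main (minor) obstacle will be the disjointification step, as one must be careful that restricting a predictable stopping time to a predictable set preserves predictability; but this is standard (an announcing sequence for $\rho_\ell$ restricted suitably announces the new stopping time). The rest is essentially a bookkeeping exercise combining the two cited Kallenberg results and the c\`adl\`ag exhaustion of jumps, which is why the author simply refers to \cite[Proposition 25.4]{Kal}, \cite[Theorem 25.14]{Kal}, and \cite[Subsection 5.3]{DY17} rather than reproducing the argument.
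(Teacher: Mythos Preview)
Your proposal is correct and is precisely the standard argument underlying the references the paper cites; indeed, the paper gives no proof of its own for this lemma, merely pointing to \cite[Proposition 25.4]{Kal}, \cite[Theorem 25.14]{Kal}, and \cite[Subsection 5.3]{DY17}. Your reconstruction---exhausting the jumps of the c\`adl\`ag path by debut times, splitting each into accessible and totally inaccessible parts, covering the accessible parts by predictable stopping times, and then disjointifying---is exactly what those citations encode, as you yourself observe in your final paragraph.
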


\begin{lemma}\label{lem:DeltaMtauisamartiftauispredictable}
 Let $X$ be a Banach, $M:\mathbb R_+ \times \Omega \to X$ be an $L^1$-bounded martingale, $\tau$ be a predictable stopping time. Then
 \[
  N_t:= \Delta M_{\tau} \mathbf 1_{[0,t]}(\tau),\;\;\; t\geq 0,
 \]
defines an $L^1$-bounded martingale.
\end{lemma}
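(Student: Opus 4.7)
The plan is to identify $N$ as the Doob martingale generated by a single integrable $X$-valued random variable, which yields both the martingale property and $L^1$-boundedness at once. Since the paper defines ``$L^1$-bounded'' to mean $M_t\to M_\infty$ in $L^1(\Omega;X)$, I may write $M_t = \mathbb E(M_\infty\mid \mathcal F_t)$ for this limit. The optional sampling theorem then gives $M_\tau = \mathbb E(M_\infty\mid \mathcal F_\tau)$ on $\{\tau<\infty\}$.

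For the left-limit, I would use an announcing sequence $(\tau_k)_{k\ge 1}$ for the predictable stopping time $\tau$, so $\tau_k<\tau$ on $\{\tau>0\}$ and $\tau_k\uparrow \tau$ a.s. Applying optional sampling at each $\tau_k$ gives $M_{\tau_k} = \mathbb E(M_\infty\mid \mathcal F_{\tau_k})$. Since $\mathcal F_{\tau-} = \bigvee_k \mathcal F_{\tau_k}$ by a standard fact about predictable times, the vector-valued martingale convergence theorem (for Doob martingales relative to $\mathcal F_{\tau_k}\uparrow \mathcal F_{\tau-}$) together with the pointwise convergence $M_{\tau_k}\to M_{\tau-}$ coming from c\`adl\`ag paths yields the identification $M_{\tau-} = \mathbb E(M_\infty\mid \mathcal F_{\tau-})$. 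In particular the random variable $Y := \Delta M_\tau \mathbf 1_{\{\tau<\infty\}}$ lies in $L^1(\Omega;X)$, is $\mathcal F_\tau$-measurable, and since $\mathcal F_{\tau-}\subset \mathcal F_\tau$ satisfies $\mathbb E(Y\mid \mathcal F_{\tau-}) = M_{\tau-} - M_{\tau-} = 0$.

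The main claim is then that $N_t = \mathbb E(Y\mid \mathcal F_t)$ for every $t\ge 0$, which I would verify by splitting on $\{\tau\le t\}$ and $\{\tau>t\}$. On the first event $Y\mathbf 1_{\{\tau\le t\}}$ is $\mathcal F_{\tau\wedge t}\subseteq \mathcal F_t$-measurable (because $Y\in \mathcal F_\tau$ and $\{\tau\le t\}\in\mathcal F_\tau\cap \mathcal F_t$), so the conditional expectation equals $Y\mathbf 1_{\{\tau\le t\}} = \Delta M_\tau\mathbf 1_{[0,t]}(\tau) = N_t$. On the second event, for any $A\in \mathcal F_t$ the set $A\cap\{\tau>t\}$ belongs to $\mathcal F_{\tau-}$ (this is one of the generating events for $\mathcal F_{\tau-}$ at a predictable time), so
\begin{equation*}
\mathbb E(\mathbf 1_A \mathbf 1_{\{\tau>t\}} Y) = \mathbb E\bigl(\mathbf 1_A \mathbf 1_{\{\tau>t\}}\, \mathbb E(Y\mid \mathcal F_{\tau-})\bigr) = 0,
\end{equation*}
and hence $\mathbb E(Y\mid \mathcal F_t) = 0 = N_t$ on $\{\tau>t\}$. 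Once this identification is established, $N$ is automatically an adapted $L^1$-bounded martingale with $N_\infty = Y$, with $\mathbb E\|N_t\| \le \mathbb E\|Y\| \le 2\mathbb E\|M_\infty\|$ uniformly in $t$.

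The main technical obstacle will be the careful identification $M_{\tau-} = \mathbb E(M_\infty\mid \mathcal F_{\tau-})$ via the announcing sequence and the identity $\mathcal F_{\tau-}=\bigvee_k\mathcal F_{\tau_k}$; everything else is a clean bookkeeping exercise with conditional expectations, and no specifically Banach-space features come in beyond the fact that Doob martingales and optional sampling work verbatim in $L^1(\Omega;X)$.
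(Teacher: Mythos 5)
Your argument is correct and is essentially the standard proof that the paper delegates to its citations of \cite[Proposition 25.4, Theorem 25.14]{Kal} and \cite[Subsection 5.3]{DY17}: identify $N_t=\mathbb E(Y\mid\mathcal F_t)$ with $Y=\Delta M_\tau\mathbf 1_{\{\tau<\infty\}}$, using optional sampling, the identity $\mathcal F_{\tau-}=\bigvee_k\mathcal F_{\tau_k}$ for an announcing sequence, and the fact that $A\cap\{\tau>t\}\in\mathcal F_{\tau-}$ for $A\in\mathcal F_t$. The only points worth a word in a write-up are that optional sampling and closed-martingale convergence for Banach-valued $L^1$-martingales are obtained by testing against a countable norming set of functionals (no RNP is needed since the martingale is closed by $M_\infty$), and that $\Delta M_0=0$ under the paper's convention, so the case $\tau=0$ causes no trouble; both are routine.
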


Let $X$ be a Banach space, $M:\mathbb R_+ \times \Omega \to X$ be a purely discontinuous martingale with accessible jumps, $(\tau_n)_{n\geq 0}$ be a set of predictable stopping times with disjoint graphs such that \eqref{eq:DeltaMareintaunonly} holds. Thanks to Lemma \ref{lem:DeltaMtauisamartiftauispredictable} for each $n\geq 1$ we can define a martingale
\begin{equation}\label{eq:defofMnforMpdwithaj3topora}
 M^n_t = \sum_{i=1}^n \Delta M_{\tau_i} \mathbf 1_{[0,t]}(\tau_i),\;\;\; t\geq 0.
\end{equation}
{\em Does $(M^n)_{n\geq 1}$ converge to $M$ in strong $L^p$-sense?} The following theorem answers this question in the UMD case.

\begin{theorem}\label{thm:approxofpdmartwajsig}
 Let $X$ be a UMD Banach space, $M:\mathbb R_+ \times \Omega \to X$ be a martingale with accessible jumps, $(M^n)_{n\geq 1}$ be as in \eqref{eq:defofMnforMpdwithaj3topora}. Assume that $\mathbb E \sup_{t\geq 0} \|M_t\|^p <\infty$ for some fixed $1\leq p<\infty$. Then $\mathbb E \sup_{t\geq 0} \|M^n_t\|^p <\infty$ for all $n\geq 1$ and
 \[
  \mathbb E \sup_{t\geq 0} \|M_t- M^n_t\|^p \to 0,\;\; \; n\to \infty.
 \]
\end{theorem}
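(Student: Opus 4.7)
The plan is to mirror the strategy used for Theorem~\ref{thm:approxofpdqlcmartbywhatyouneed}, replacing the jump measure decomposition by the explicit sum over the predictable times $(\tau_i)_{i\ge 1}$. Since $M$ is purely discontinuous with accessible jumps and the $\tau_i$'s have disjoint graphs and exhaust the jumps of $M$, the martingales $M^n$ defined by \eqref{eq:defofMnforMpdwithaj3topora} are themselves purely discontinuous with accessible jumps, and the difference $M-M^n$ is a purely discontinuous martingale whose jumps occur at $\tau_{n+1},\tau_{n+2},\ldots$ with $\Delta(M-M^n)_{\tau_i}=\Delta M_{\tau_i}$ for $i>n$ and zero otherwise. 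Consequently, for every $x^*\in X^*$ one has a.s.
\[
  [\langle M-M^n,x^*\rangle]_\infty \;=\; \sum_{i>n}\langle \Delta M_{\tau_i},x^*\rangle^2,
\]
and in particular $[\![M-M^n]\!]_\infty(x^*,x^*)\searrow 0$ monotonically as $n\to\infty$ for each fixed $x^*\in X^*$.

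First I would establish $\mathbb E\sup_{t\ge 0}\|M^n_t\|^p<\infty$ uniformly in $n$. Since the jumps of $M^n$ form a subset of the jumps of $M$, Lemma~\ref{lem:VgeqWthenVgammageqWgamma} applied pointwise in $\omega$ gives $\gamma([\![M^n]\!]_\infty)\le \gamma([\![M]\!]_\infty)$ a.s., and the same reasoning bounds $\gamma([\![M-M^n]\!]_\infty)$ by $\gamma([\![M]\!]_\infty)$ a.s. Combined with Theorem~\ref{thm:BDGgeneralUMD} (or equivalently Theorem~\ref{thm:BDGUMDpdmartingales}), the assumption $\mathbb E\sup_t\|M_t\|^p<\infty$ yields
\[
  \mathbb E\gamma([\![M]\!]_\infty)^p \;\eqsim_{p,X}\; \mathbb E\sup_{t\ge 0}\|M_t\|^p \;<\;\infty,
\]
so both $\mathbb E\sup_t\|M^n_t\|^p$ and $\mathbb E\sup_t\|M_t-M^n_t\|^p$ are finite and uniformly bounded in $n$.

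Next I would prove the $L^p$-convergence. Applying Theorem~\ref{thm:BDGgeneralUMD} to the martingale $M-M^n$ gives
\[
  \mathbb E\sup_{t\ge 0}\|M_t-M^n_t\|^p \;\eqsim_{p,X}\; \mathbb E\gamma([\![M-M^n]\!]_\infty)^p.
\]
Since $[\![M-M^n]\!]_\infty(x^*,x^*)\searrow 0$ for each $x^*\in X^*$ and $\gamma([\![M-M^n]\!]_\infty)\le \gamma([\![M]\!]_\infty)<\infty$ a.s., Lemma~\ref{lem:V_nmonto0thenthesamefor0} (applied with $Y=X^*$) yields $\gamma([\![M-M^n]\!]_\infty)\to 0$ a.s.\ monotonically as $n\to\infty$. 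The dominated convergence theorem, using the $L^p$-integrable dominant $\gamma([\![M]\!]_\infty)$, then gives $\mathbb E\gamma([\![M-M^n]\!]_\infty)^p\to 0$, which is the desired conclusion.

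The main subtlety, and the only step that requires more than routine invocation of earlier results, is verifying that the jumps of $M-M^n$ are exactly $(\Delta M_{\tau_i})_{i>n}$ and hence obtaining the pointwise monotone decay $[\![M-M^n]\!]_\infty(x^*,x^*)\searrow 0$ for every $x^*\in X^*$; this relies on the fact that $M$ is purely discontinuous with accessible jumps concentrated on $\{\tau_1,\tau_2,\ldots\}$ together with the disjointness of the graphs of the $\tau_i$'s, so that $M-M^n$ is again purely discontinuous and its covariation is fully determined by the tail of the jump sequence. Once this is in place the rest of the argument is an essentially mechanical translation of the proof of Theorem~\ref{thm:approxofpdqlcmartbywhatyouneed} to the accessible-jump setting.
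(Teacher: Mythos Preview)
Your proposal is correct and follows essentially the same route as the paper's proof, which simply states that the argument is fully analogous to that of Theorem~\ref{thm:approxofpdqlcmartbywhatyouneed}: identify the jumps of $M-M^n$, observe that $[\![M-M^n]\!]_\infty(x^*,x^*)\searrow 0$, and then combine Theorem~\ref{thm:BDGgeneralUMD} with Lemma~\ref{lem:V_nmonto0thenthesamefor0} and a convergence theorem. The only cosmetic difference is that you invoke dominated convergence with dominant $\gamma([\![M]\!]_\infty)$, whereas the paper phrases it as monotone convergence; since the sequence $\gamma([\![M-M^n]\!]_\infty)$ is monotone and dominated, the two formulations are equivalent here.
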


\begin{proof}
 The proof is fully analogous to the proof of Theorem \ref{thm:approxofpdqlcmartbywhatyouneed}.
\end{proof}

\subsubsection{Proof of Theorem \ref{thm:WeisztogenUMD}}

Let us now prove Theorem \ref{thm:WeisztogenUMD}. Since $X$ is a UMD Banach space, $M$ has the {\em canonical decomposition}, i.e.\ there exist an $X$-valued continuous local martingale $M^c$, an $X$-valued purely discontinuous quasi-left continuous local martingale $M^q$, and an $X$-valued purely discontinuous local martingale $M^a$ with accessible jumps such that $M^c_0=M^q_0=0$ and $M=M^c+M^q+M^a$ (see Subsection \ref{subsec:candec} for details). Moreover, by \eqref{eq:camdecLpbounforpgeq1} and a triangle inequality
\begin{equation*}
 \mathbb E \sup_{t\geq 0} \Bigl(\|M^c_t\|^p + \|M^q_t\|^p + \|M^a_t\|^p\Bigr) \eqsim_{p,X} \mathbb E \sup_{t\geq 0} \|M_t\|^p,
\end{equation*}
so it is sufficient to show Theorem \ref{thm:WeisztogenUMD} for each of these three cases separately. By \cite[Theorem 1.3.2 and 3.3.16]{HNVW1} $M$ converges a.s., so we can assume that there exists $T>0$ such that $M_t = M_T$ a.s.\ for all $t\geq T$.

{\em Case 1: $M$ is continuous.} The theorem follows from the fact that every continuous martingale is locally bounded and the fact that $M_t= M_T$ for all $t\geq T$.

{\em Case 2: $M$ is purely discontinuous quasi-left continuous.} By Theorem \ref{thm:approxofpdqlcmartbywhatyouneed} one can assume that $M$ has uniformly bounded jumps. Then the theorem follows from the fact that any adapted c\`adl\`ag process with uniformly bounded jumps is local uniformly bounded and the fact that $M_t= M_T$ for all $t\geq T$.
 
{\em Case 3: $M$ is purely discontinuous with accessible jumps.} By Theorem \ref{thm:approxofpdmartwajsig} we can assume that there exist predictable stopping times $(\tau_n)_{n=1}^N$ with disjoint graphs such that
\[
 M_t = \sum_{n=1}^N \Delta M_{\tau_n} \mathbf 1_{[0,t]}(\tau_n),\;\;\; t\geq 0.
\]
Fix $\eps>0$.
Without loss of generality we may assume that the stopping times $(\tau_n)_{n=1}^N$ are bounded a.s.
Due to \cite[Subsection 5.3]{DY17} we may additionally assume that $(\tau_n)_{n=1}^N$ is a.s.\ increasing. 
Then by \cite[Subsection 5.3]{DY17} (or \cite[Lemma 26.18]{Kal} in the real-valued case) the sequence $(0,\Delta M_{\tau_1}, 0, \Delta M_{\tau_2},\ldots, 0,\Delta M_{\tau_N})$ is a martingale difference sequence with respect to the filtration 
$$
\mathbb G := (\mathcal F_{\tau_1-}, \mathcal F_{\tau_1},\mathcal F_{\tau_2-}, \mathcal F_{\tau_2},\ldots, \mathcal F_{\tau_N-}, \mathcal F_{\tau_N})
$$ 
(see \cite[Lemma 25.2]{Kal} for the definition of $\mathcal F_{\tau-}$). As any discrete $L^p$-bounded martingale difference sequence, $(0,\Delta M_{\tau_1}, 0, \Delta M_{\tau_2},\ldots, 0,\Delta M_{\tau_N})$  can be approximated in a strong $L^p$-sense by a uniformly bounded $X$-valued $\mathbb G$-martingale difference sequence $(0, d_1^{\eps}, 0, d_2^{\eps}, \ldots, 0, d_N^{\eps})$ such that 
\[
\mathbb E\sup_{n=1}^N \Bigl\| \sum_{i=1}^n \Delta M_{\tau_i} - d_i^{\eps}\Bigr\|^p<\eps.
\]
The martingale difference sequence $(0, d_1^{\eps}, 0, d_2^{\eps}, \ldots, 0, d_N^{\eps})$ can be translated back to a martingale on $\mathbb R_+$ in the same way as it was shown in \cite[Subsection 5.3]{DY17}, i.e.\ one can define a process $N^{\eps}:\mathbb R_+ \times \Omega \to X$ such that
\[
 N^{\eps}_t := \sum_{n=1}^N d_n \mathbf 1_{[0,t]}(\tau_n),\;\;\; t\geq 0,
\]
which is a martingale by \cite[Subsection 5.3]{DY17} (or see \cite[Lemma 26.18]{Kal} for the real valued version) with
\[
 \mathbb E \sup_{t\geq 0} \|M_t - N^{\eps}_t\|^p  = \mathbb E \sup_{t\geq 0}\Bigl\|\sum_{0\leq s\leq t} \Delta M_s - \Delta N^{\eps}_s\Bigr\|^p = \mathbb E\sup_{n=1}^N \Bigl\| \sum_{i=1}^n \Delta M_{\tau_i} - d_i\Bigr\|^p<\eps,
\]
which terminates the proof.

\begin{remark}
 Clearly Theorem \ref{thm:WeisztogenUMD} holds true if $X$ has a Schauder basis. Therefore it remain open for whether Theorem \ref{thm:WeisztogenUMD} holds true for a general Banach space.
\end{remark}

\subsection{The canonical decomposition}\label{subsec:candec}

Let $X$ be a Banach space. Then $X$ has the UMD property if and only if any $X$-valued local martingale $M$ has the so-called {\em canonical decomposition}, i.e.\ there exist an $X$-valued continuous local martingale $M^c$ (a Wiener-like part), an $X$-valued purely discontinuous quasi-left continuous local martingale $M^q$ (a compensated Poisson-like part), and an $X$-valued purely discontinuous local martingale $M^a$ with accessible jumps (a discrete-like part) such that $M^c_0=M^q_0=0$ and $M=M^c+M^q+M^a$. We refer the reader to \cite{Kal,DY17,Y17GMY,Y17MartDec} for the details on the canonical decomposition. 

As it was shown in \cite{DY17,Y17GMY,Y17MartDec}, the canonical decomposition is unique, and by \cite[Section 3]{Y17MartDec} together with \eqref{eq:DoobsineqXBanach} we have that for any $1<p<\infty$ and for any $i=c,q,a$
\begin{equation}\label{eq:camdecLpbounforp>1}
 \mathbb E \sup_{t\geq 0} \|M^i_t\|^p \lesssim_{p,X} \mathbb E \sup_{t\geq 0} \|M_t\|^p.
\end{equation}
Theorem \ref{thm:applicmartingdomina} allows us to extend \eqref{eq:camdecLpbounforp>1} to the case $p=1$. Indeed, it is known due to \cite{Y17GMY,Y17MartDec} that for any $x^*\in X^*$ a.s.
\[
[\langle M, x^*\rangle]_t =  [\langle M^c, x^*\rangle]_t + [\langle M^q, x^*\rangle]_t + [\langle M^a, x^*\rangle]_t,\;\;\;\; t\geq 0,
\]
so by Theorem \ref{thm:applicmartingdomina}
\begin{equation}\label{eq:camdecLpbounforpgeq1}
 \mathbb E \sup_{t\geq 0} \|M^i_t\|^p \lesssim_{p,X} \mathbb E \sup_{t\geq 0} \|M_t\|^p,
\end{equation}
for all $1\leq p<\infty$ and any $i=c,q,a$.

\subsection{Covariation bilinear forms for pairs of martingales}

Let $X$ be a UMD Banach space, $M, N:\mathbb R_+ \times \Omega\to X$ be local martingales. Then for any fixed $t\geq 0$ and any $x^*, y^*\in X^*$ we have that by \cite[Theorem 26.6(iii)]{Kal} a.s.\
$$
[\langle M, x^*\rangle, \langle N, y^*\rangle]_t \leq [\![M]\!]_t(x^*, x^*)[\![N]\!]_t(y^*, y^*).
$$
Thus analogously the proof of Theorem \ref{thm:BDGgeneralUMD} (by exploiting a subspace $Y$ of $X^*$ that is a linear span of a countable subset of $X^*$) there exists a bounded bilinear form-valued random variable $[\![M, N]\!]_t:\Omega\to X\otimes X$ such that $[\langle M, x^*\rangle, \langle N, y^*\rangle]_t =[\![M, N]\!]_t(x^*, y^*) $ for any $x^*, y^*\in X^*$ a.s.

Now let $X$ and $Y$ be UMD Banach spaces (perhaps different), $M:\mathbb R_+ \times \Omega \to X$, $N:\mathbb R_+ \times \Omega \to Y$ be local martingales. Then we can show that for any $t\geq 0$ there exists a bilinear form-valued process $[\![M,N]\!]_t:\Omega\to X \otimes Y$ such that $[\![M,N]\!]_t = [\langle M, x^*\rangle, \langle N, y^*\rangle]_t$ a.s.\ for any $x^*\in X^*$ and $y^*\in Y^*$. Indeed, one can presume the Banach space to be $X\times Y$ and extend both $M$ and $N$ to take values in this Banach space. Then by the first part of the present subsection there exists a bilinear form $[\![M,N]\!]_t$ acting on $(X \times Y)^* \times (X \times Y)^*$ such that for any $x^*\in X^*$ and $y^*\in Y^*$ a.s.\
\begin{equation}\label{eq:covbilineatformdiffXY}
 \begin{split}
  [\![M,N]\!]_t\bigl((x^*, y^*), (x^*, y^*)\bigr) &= [\langle M, (x^*, y^*)\rangle, \langle N, (x^*, y^*)\rangle]_t\\
  &=[\langle M, x^*\rangle, \langle N, y^*\rangle]_t.
 \end{split}
\end{equation}
It remains to restrict $[\![M,N]\!]_t$ back to $X\otimes Y$ from $(X\times Y)\otimes (X\times Y)$ which is possible by \eqref{eq:covbilineatformdiffXY}.

Interesting things happen given $Y=\mathbb R$. In this case $[\![M, N]\!]_t$ takes values in $X\otimes \mathbb R \simeq X$, so $[\![M, N]\!]_t$ is simply $X$-valued, and it is easy to see that
\begin{equation}\label{eq:[[M,N]]givenX=XY=R}
  [\![M, N]\!]_t = \mathbb P-\lim_{\text{mesh}\to 0}\sum_{i=1}^n (M(t_n)-M(t_{n-1}))(N(t_n)-N(t_{n-1})),
\end{equation}
where the limit in probability is taken over partitions $0= t_0 < \ldots < t_n = t$, and it is taken in a {\em weak} sense (i.e.\ \eqref{eq:[[M,N]]givenX=XY=R} holds under action of any linear functional $x^*\in X^*$). It remains open whether \eqref{eq:[[M,N]]givenX=XY=R} holds in a strong sense.

\section{UMD Banach function spaces}\label{subsec:UMDBanachfs}

Here we are going to extend \eqref{eq:BDGBFSINTRO} to the case $p=1$.
Let us first recall some basic definitions on Banach function spaces. 
For a given measure space $(S,\Sigma,\mu)$, the linear space of all real-valued measurable functions is denoted by $L^0(S)$. We endow $L^0(S)$ with the local convergence in measure topology.

\begin{definition}\label{def:Bfs}
Let $(S,\Sigma,\mu)$ be a measure space. Let $n:L^0(S)\to [0,\infty]$ be a function which satisfies the following properties:
  \begin{enumerate}[(i)]
\item $n(x) = 0$ if and only if $x=0$,
\item for all $x,y\in L^0(S)$ and $\lambda\in \R$, $n(\lambda x) = |\lambda| n(x)$ and $n(x+y)\leq n(x)+n(y)$,
    \item if $x \in L^0(S), y \in L^0(S)$, and $|x| \leq |y|$, then $n(x) \leq n(y)$,
    \item there exists $\zeta \in L^0(S)$ with $\zeta>0$ and $n(\zeta)<\infty$,
    \item if $0 \leq x_n \uparrow x$ with $(x_n)_{n=1}^\infty$ a sequence in $L^0(S)$ and $x \in L^0(S)$, then  $n(x) = \sup_{n \in \N}n(x_n)$.
  \end{enumerate}
Let $X$ denote the space of all $x\in L^0(S)$ for which $\|x\|:=n(x)<\infty$. Then $X$ is called a {\em normed function space associated} to $n$. It is called a {\em Banach function space} when $(X,\|\cdot\|_X)$ is complete. We will additionally assume the following natural property of $X$:
\begin{enumerate}[(i)]
 \item[(vi)] $X$ is continuously embedded into $L^0(S)$ with the local convergence in measure topology.
\end{enumerate}
\end{definition}

Notice that the condition $(vi)$ holds automatically if one changes the measure on $(S, \Sigma)$ in an appropriate way (see \cite[Theorem 1.b.14]{LTCBsII}). 
We refer the reader to \cite{VY18,Rubio86,M-N91,LTCBsII,Zaa} for the details on Banach function spaces.

Given a Banach function space $X$ over a measure space $S$ and Banach space $E$, let $X(E)$ denote the space of all strongly measurable functions $f:S\to E$ with $\|f\|_E \in X$. The space $X(E)$ becomes a Banach space when equipped with the norm $\|f\|_{X(E)} = \big\|\sigma\mapsto \|f(\sigma)\|_E\big\|_X$.

Let $X$ be a UMD Banach function space over a $\sigma$-finite measure space $(S, \Sigma,\mu)$. According to \cite{VY18} any $X$-valued $L^p$-bounded martingale $M$, $1<p<\infty$, has a pointwise martingale version, i.e.\ there exists a process $N:\mathbb R_+ \times \Omega \times S \to \mathbb R$ such that
\begin{enumerate}[(i)]
 \item $N|_{[0,t]\times \Omega \times S}$ is $\mathcal B([0, t])\otimes \mathcal F_t \otimes \Sigma$-measurable for all $t\geq 0$,
 \item $N(\cdot, \cdot, \sigma)$ is a local martingale for a.e.\ $\sigma\in S$,
 \item $N(\omega, t,\cdot) = M_t(\omega)$ for any $t\geq 0$ for a.a.\ $\omega \in \Omega$.
\end{enumerate}
A process $N$ satisfying (i) and (ii) is called a {\em local martingale field}.
Moreover, it was shown in \cite{VY18} that for any $1<p<\infty$
\begin{equation}\label{eq:BDGforBFSp>1}
 \mathbb E \sup_{t\geq 0} \|M_t\|^p \eqsim_{p, X} \mathbb E \|[N]_{\infty}^{1/2}\|^p,
\end{equation}
where $\sigma\mapsto [N(\cdot, \cdot, \sigma)]_{\infty}^{1/2}$, $\sigma\in S$, defines an element of $X$ a.s.
The goal of the present subsection is to show that \eqref{eq:BDGforBFSp>1} holds for $p=1$.

\begin{theorem}\label{thm:BDGforBFSpgeq1}
 Let $X$ be a UMD Banach function space over a $\sigma$-finite measure space $(S, \Sigma,\mu)$, $M:\mathbb R_+ \times \Omega \to X$ be a local martingale. Then there exists a local martingale field $N:\mathbb R_+ \times \Omega \times S \to \mathbb R$ such that $N(\omega, t,\cdot) = M_t(\omega)$ for all $t\geq 0$ for a.a.\ $\omega \in \Omega$, and for all $1\leq p<\infty$
 \begin{equation}\label{eq:BDGforBFSpgeq1}
 \mathbb E \sup_{t\geq 0} \|M_t\|^p \eqsim_{p, X} \mathbb E \|[N]_{\infty}^{1/2}\|^p,
\end{equation}
\end{theorem}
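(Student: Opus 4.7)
The plan is to combine Theorem~\ref{thm:BDGgeneralUMD}, which is valid in any UMD Banach space for the entire range $1\le p<\infty$, with a pointwise (in $\omega$) identification of the Gaussian characteristic $\gamma([\![M]\!]_t)$ with the $X$-norm of the fiberwise square function $\sigma\mapsto[N(\omega,\cdot,\sigma)]_t^{1/2}$. This identification is driven by the Khintchine--Maurey square-function equivalence, which is available in any Banach function space of finite cotype and hence in every UMD Banach function space.

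First I would produce the pointwise local martingale field $N$ by localization: for $L^p$-bounded martingales with $1<p<\infty$ the result of \cite{VY18} already supplies such an $N$, and for a general local martingale $M$ stopping at $\tau_k=\inf\{t:\|M_t\|_X>k\}$ makes $M^{\tau_k}$ bounded, so the VY18-fields $N^k$ exist and agree on $[0,\tau_{k\wedge k'}]$ by uniqueness, whence gluing produces the desired $N$. Next, to obtain the pointwise identification, I fix $\omega$ and a refining sequence of partitions $\pi_n=\{0=t_0^n<\cdots<t_{k_n}^n=t\}$ with $\mathrm{mesh}(\pi_n)\to 0$, and set $d_i^n=M_{t_i^n}-M_{t_{i-1}^n}$ and $V_n(x^*,y^*):=\sum_i\langle d_i^n,x^*\rangle\langle d_i^n,y^*\rangle$. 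Since $V_n$ is the covariance bilinear form of the $X$-valued Gaussian sum $\sum_i\gamma_i d_i^n$, the Khintchine--Maurey equivalence inside $X$ yields almost surely
\[
\gamma(V_n)(\omega)\;\eqsim_X\;\Bigl\|\sigma\mapsto\Bigl(\textstyle\sum_i|N(\omega,t_i^n,\sigma)-N(\omega,t_{i-1}^n,\sigma)|^2\Bigr)^{1/2}\Bigr\|_X.
\]
On the right-hand side the scalar quadratic-variation theorem, applied fiberwise in $\sigma$, gives convergence of the inner sum to $[N(\omega,\cdot,\sigma)]_t$ for a.e.\ $\sigma$, while on the left-hand side the finite-dimensional step of the proof of Theorem~\ref{thm:BDGgeneralUMD} together with Lemma~\ref{lem:V_nmontoVthenthesameforgamma(V)} and a subsequence argument delivers $\gamma(V_n)\to\gamma([\![M]\!]_t)$. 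Combining the two limits yields the a.s.\ pointwise equivalence
\[
\gamma([\![M]\!]_t)(\omega)\;\eqsim_X\;\bigl\|\sigma\mapsto[N(\omega,\cdot,\sigma)]_t^{1/2}\bigr\|_X,
\]
whereupon taking $p$th moments and applying Theorem~\ref{thm:BDGgeneralUMD} yields~\eqref{eq:BDGforBFSpgeq1} for every $1\le p<\infty$.

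The hard part, which distinguishes the $p=1$ case from the already-known $1<p<\infty$ case of~\cite{VY18}, is that at $p=1$ one cannot invoke Doob's maximal inequality to exchange limit and expectation inside $L^1(\Omega;X)$, so the simultaneous convergence of the two square-function-type quantities above along refining partitions must be executed delicately. This is precisely the role of Theorem~\ref{thm:WeisztogenUMD}: it allows $M$ to be approximated in $L^1(\Omega;X)$ by a sequence of $L^\infty$-bounded martingales $M^{(n)}$, for which the discretised square functions $(\sum_i|d_i^n|^2)^{1/2}$ become controllable inside $X$ by uniform boundedness and dominated convergence, and the a.s.\ pointwise identification can then be transferred from the approximants back to $M$ by applying the BDG estimate~\eqref{eq:thmBDGgeneralUMD} to the differences $M-M^{(n)}$ and invoking Lemmas~\ref{lem:VgeqWthenVgammageqWgamma}--\ref{lem:V_nmontoVthenthesameforgamma(V)}.
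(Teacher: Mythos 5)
Your high-level strategy --- reduce everything to Theorem \ref{thm:BDGgeneralUMD} and identify $\gamma([\![M]\!]_t)$ with the $X$-norm of the fiberwise square function via the Khintchine--Maurey equivalence, with Theorem \ref{thm:WeisztogenUMD} carrying the weight at $p=1$ --- is the right one in spirit, but two of your steps have genuine gaps. The first is the construction of the local martingale field $N$ by stopping: with $\tau_k=\inf\{t:\|M_t\|>k\}$ the stopped process $M^{\tau_k}$ is \emph{not} bounded, since its value at $\tau_k$ is at most $k$ plus an uncontrolled jump $\Delta M_{\tau_k}$; hence $M^{\tau_k}$ need not be $L^p$-bounded for any $p>1$ and \cite{VY18} does not apply to it. The existence of $N$ for a general local martingale is itself part of the assertion; the paper obtains it by approximating $M$ through Theorem \ref{thm:WeisztogenUMD} by genuinely $L^\infty$-bounded martingales $M^n$, attaching fields $N^n$ via the case $p>1$, and showing that $(N^n)_{n\geq1}$ is Cauchy in the Banach space $\textnormal{MQ}^1(X)$ of Lemma \ref{lem:MQ1(X)isBanach}. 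That Cauchy argument requires the two-sided constants of the case $p>1$ to remain bounded as $p\downarrow 1$, which is the point of \eqref{eq:controlofconstantsBDGforBFScont} (resting on Proposition \ref{prop:BDGforBFSdiscretecase} and Remark \ref{rem:controlofconstantsBDGforBFSdis}); this quantitative input is absent from your sketch.

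The second gap is the claimed almost sure identification $\gamma([\![M]\!]_t)(\omega)\eqsim_X\bigl\|\sigma\mapsto[N(\omega,\cdot,\sigma)]_t^{1/2}\bigr\|_X$ via a simultaneous double limit along refining partitions. For each fixed partition the Khintchine--Maurey equivalence does give $\gamma(V_n)\eqsim_X\|(\sum_i|\Delta_i N|^2)^{1/2}\|_X$, but the paper only establishes $\gamma(V_n)\to\gamma([\![M]\!]_t)$ in $L^{p}(\Omega)$ (and only after a finite-dimensional reduction), not almost surely along a fixed refining sequence; and on the other side, a.e.-in-$\sigma$ convergence of the scalar discrete quadratic variations does not by itself yield convergence of the $X$-norms, since the discrete square functions are not monotone in $n$ and property (v) of Definition \ref{def:Bfs} only handles monotone limits. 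The paper deliberately avoids any pointwise-in-$\omega$ identification and argues entirely at the level of $L^1(\Omega)$-norms, passing to the limit in the two-sided inequality through $\textnormal{MQ}^1(X)$. To salvage your route you would have to prove the pointwise identification as a separate lemma, which is essentially the content of \cite{VY18} rather than a corollary of the results established here.
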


Let us first show the discrete version of Theorem \ref{thm:BDGforBFSpgeq1}, which was shown in \cite[Theorem 3]{Rubio86} for the case $p\in (1,\infty)$.

\begin{proposition}\label{prop:BDGforBFSdiscretecase}
 Let $X$ be a UMD Banach function space over a measure space $(S, \Sigma,\mu)$, $(d_n)_{n\geq 1}$ be an $X$-valued martingale difference sequence. Then for all $1\leq p<\infty$
 \[
  \mathbb E \sup_{N\geq 1} \Bigl\| \sum_{n=1}^N d_n \Bigr\|^p \eqsim_{p, X} \mathbb E \Bigl\|\Bigl(\sum_{n=1}^{\infty}|d_n|^2\Bigr)^{\frac 12}\Bigr\|^{p}.
 \]
\end{proposition}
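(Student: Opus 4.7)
The plan is to deduce this directly from the discrete Burkholder-Davis-Gundy inequality (Theorem \ref{thm:BDGgendiscmart}) together with the identification of the $\gamma$-norm with a square function norm in UMD Banach function spaces, which is a manifestation of the Khintchine-Maurey inequalities. Since Theorem \ref{thm:BDGgendiscmart} already covers the full range $1 \le p < \infty$, the case $p=1$ comes out of the same argument as the case $p > 1$; the point is simply to rewrite the right-hand side of \eqref{eq:discBDGUMDBanach} as a square function.

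First I would apply Theorem \ref{thm:BDGgendiscmart} to obtain, for every $1 \le p < \infty$,
\[
  \mathbb E \sup_{N\geq 1} \Bigl\| \sum_{n=1}^N d_n \Bigr\|^p \eqsim_{p, X} \mathbb E \|(d_n)_{n=1}^{\infty}\|_{\gamma(\ell^2, X)}^p
  = \mathbb E \Bigl(\mathbb E_\gamma \Bigl\| \sum_{n=1}^{\infty} \gamma_n d_n \Bigr\|_X^2\Bigr)^{p/2}.
\]
The second step is to use the pointwise-in-$\sigma$ evaluation afforded by the Banach function space structure. For almost every $\sigma \in S$, the scalar sequence $(d_n(\sigma))_{n\ge 1}$ is deterministic in $\omega_\gamma$, so by Khintchine's inequality
\[
  \Bigl(\mathbb E_\gamma \Bigl| \sum_{n=1}^{\infty} \gamma_n d_n(\sigma) \Bigr|^2\Bigr)^{1/2} = \Bigl(\sum_{n=1}^{\infty} |d_n(\sigma)|^2\Bigr)^{1/2}.
\]
Then the Khintchine-Maurey inequalities for Banach function spaces with finite cotype (see \cite[Theorem 7.2.13]{HNVW2}; every UMD space, and in particular every UMD BFS, has finite cotype) give
\[
  \Bigl(\mathbb E_\gamma \Bigl\| \sum_{n=1}^{\infty} \gamma_n d_n \Bigr\|_X^2\Bigr)^{1/2} \eqsim_X \Bigl\| \Bigl(\sum_{n=1}^{\infty} |d_n|^2\Bigr)^{1/2} \Bigr\|_X
\]
pointwise in $\omega$. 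Combining these two equivalences and taking the $p$-th power and expectation yields the claim.

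The argument is essentially routine once one has the vector-valued BDG inequality in the $\gamma$-norm formulation of Theorem \ref{thm:BDGgendiscmart}; there is no genuine obstacle specific to $p=1$, since the Khintchine-Maurey identification of the $\gamma$-norm with the square function is an $\omega$-pointwise identity that does not interact with the outer $L^p(\Omega)$ norm. The only subtlety worth flagging is the measurability required to write $\|(\sum_n |d_n|^2)^{1/2}\|_X$ as a genuine element of $X$: this follows because each $d_n \in X$, the partial sums $(\sum_{n\le N} |d_n|^2)^{1/2}$ are in $X$ by the lattice structure, and monotone convergence together with condition (v) in Definition \ref{def:Bfs} allows passage to the limit (finiteness of the limit for a.e.\ $\omega$ being guaranteed by the finiteness of the left-hand side after truncating to a finite sum, which is the case we reduce to by monotone convergence on both sides).
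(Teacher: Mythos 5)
Your proposal is correct and follows essentially the same route as the paper: apply Theorem \ref{thm:BDGgendiscmart} and then identify the $\gamma(\ell^2,X)$-norm with the square function via the Khintchine--Maurey equivalence for Banach function spaces with finite cotype (the paper cites \cite[(9.26)]{HNVW2} for exactly this identification). The only cosmetic remark is that your pointwise identity for $\bigl(\mathbb E_\gamma|\sum_n\gamma_n d_n(\sigma)|^2\bigr)^{1/2}$ is just Gaussian orthogonality rather than Khintchine's inequality, but this does not affect the argument.
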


\begin{proof}
 The proof follows from Theorem \ref{thm:BDGgendiscmart} and the equivalence \cite[(9.26)]{HNVW2} between the $\gamma$-norm and the square function.
\end{proof}

\begin{remark}\label{rem:controlofconstantsBDGforBFSdis}
 By Remark \ref{rem:c_ppXandC_p'Xareindepofp} and \cite[(9.26)]{HNVW2} one has that for any $r\in (1,\infty)$  there exist positive $C_{r, X}$ and $c_{r,X}$ such that for any $1\leq p\leq r$
 \[
   c_{r, X} \mathbb E \Bigl\|\Bigl(\sum_{n=1}^{\infty}|d_n|^2\Bigr)^{\frac 12}\Bigr\|^{p} \leq \mathbb E \sup_{N\geq 1}\Bigl\|\sum_{n=1}^{N} d_n\Bigr\|^p \leq C_{r, X} \mathbb E \Bigl\|\Bigl(\sum_{n=1}^{\infty}|d_n|^2\Bigr)^{\frac 12}\Bigr\|^{p}.
 \]
\end{remark}

We will also need the following technical lemma proved in \cite[Section 4]{VY18}. Recall that $\mathcal D_b([0,\infty);X)$ is the Banach space of all bounded $X$-valued c\`adl\`ag functions on $\mathbb R_+$, which is also known as a {\em Skorohod space}.

\begin{lemma}\label{lem:MQ1(X)isBanach}
 Let $X$ be a Banach function space over a $\sigma$-finite measure space $(S, \Sigma,\mu)$. Let
\begin{multline*}
 \textnormal{MQ}^1(X) := \{N:\R_+\times \Omega\times S\to\R: N \ \text{is a local martingale field,}\\
 N_0(\sigma) = 0\; \forall \sigma\in S, \  \text{and} \ \|N\|_{\textnormal{MQ}^1(X)} <\infty\}, 
\end{multline*}
where
\begin{equation}\label{eq:norminMQ1(X)}
 \|N\|_{\textnormal{MQ}^1(X)} := \|[N]_{\infty}^{1/2}\|_{L^1(\Omega;X)}.
\end{equation}
Then $(\textnormal{MQ}^1(X), \|\cdot\|_{\textnormal{MQ}^1(X)})$ is a Banach space. 
Moreover, if $N^n\to N$ in $\textnormal{MQ}^1$, then there exists a subsequence $(N^{n_k})_{k\geq 1}$ such that pointwise a.e.\ in $S$, we have $N^{n_k}\to N$ in $L^1(\Omega;\mathcal D_b([0,\infty)))$.
\end{lemma}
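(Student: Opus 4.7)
My plan is to combine real-valued $H^1$-martingale theory applied pointwise in $\sigma$ with the lattice structure of the Banach function space $X$. The pointwise-$\sigma$ version of the scalar BDG inequality is the bridge between the norm on $\textnormal{MQ}^1(X)$ and the individual scalar martingales $N(\cdot,\cdot,\sigma)$.

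First I would verify that $\|\cdot\|_{\textnormal{MQ}^1(X)}$ is a norm. Absolute homogeneity is immediate. For definiteness, $\|N\|_{\textnormal{MQ}^1(X)}=0$ forces $[N(\cdot,\cdot,\sigma)]_\infty=0$ a.s.\ for $\mu$-a.e.\ $\sigma$ (by property (i) of $X$ in Definition~\ref{def:Bfs}), which together with $N_0(\sigma)=0$ forces $N(\cdot,\cdot,\sigma)\equiv 0$ a.s. For the triangle inequality, Kunita--Watanabe yields the pointwise bound $[N+M]_\infty^{1/2}(\omega,\sigma)\leq [N]_\infty^{1/2}(\omega,\sigma)+[M]_\infty^{1/2}(\omega,\sigma)$; applying monotonicity of $\|\cdot\|_X$ and then integrating in $\omega$ gives the result.

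For completeness, take a Cauchy sequence $(N^n)$ in $\textnormal{MQ}^1(X)$ and pass to a subsequence (still denoted $N^n$) with $\|N^{n+1}-N^n\|_{\textnormal{MQ}^1(X)}\leq 2^{-n}$. Then $S:=\sum_n [N^{n+1}-N^n]_\infty^{1/2}$ converges in $L^1(\Omega;X)$, hence is finite for a.e.\ $(\omega,\sigma)$. Projecting to a fixed $\sigma$ and applying the classical scalar BDG inequality at $p=1$ to each martingale $N^{n+1}(\cdot,\cdot,\sigma)-N^n(\cdot,\cdot,\sigma)$ shows that for $\mu$-a.e.\ $\sigma$ the series $\sum_n\sup_{t\geq 0}|N^{n+1}_t(\sigma)-N^n_t(\sigma)|$ is finite in $L^1(\Omega)$. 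Thus for a.e.\ $\sigma$, $(N^n(\cdot,\cdot,\sigma))$ is Cauchy in $L^1(\Omega;\mathcal D_b([0,\infty)))$ with a pointwise $L^1$-dominating function; let $N(\cdot,\cdot,\sigma)$ denote its limit. Joint measurability of the resulting $N:\mathbb R_+\times\Omega\times S\to\mathbb R$ is inherited from the $N^n$ (define $N$ as the pointwise limit on the set of full measure where convergence holds, and extend by $0$ otherwise); the martingale property passes to the $L^1(\Omega)$-limit, and $N_0=0$ is preserved, so $N$ is a local martingale field.

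To show $N^n\to N$ in $\textnormal{MQ}^1(X)$ (and in particular $N\in\textnormal{MQ}^1(X)$), I would use the tail estimate
\[
[N-N^n]_\infty^{1/2}(\omega,\sigma)\leq \sum_{k\geq n}[N^{k+1}-N^k]_\infty^{1/2}(\omega,\sigma),
\]
obtained by the pointwise Kunita--Watanabe triangle inequality together with $\mathcal D_b$-convergence along the subsequence (so that $N-N^n=\sum_{k\geq n}(N^{k+1}-N^k)$ as martingales after passage to the limit in the partial sums). Taking $\|\cdot\|_X$ and then $\mathbb E$ gives $\|N-N^n\|_{\textnormal{MQ}^1(X)}\leq 2^{1-n}$. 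The original Cauchy sequence then converges as well. The ``moreover'' assertion is now a direct re-use of the argument above: given $N^n\to N$ in $\textnormal{MQ}^1(X)$, extract a subsequence $(N^{n_k})$ with $\|N^{n_k}-N\|_{\textnormal{MQ}^1(X)}\leq 2^{-k}$, and the Fubini/BDG step yields $N^{n_k}(\cdot,\cdot,\sigma)\to N(\cdot,\cdot,\sigma)$ in $L^1(\Omega;\mathcal D_b([0,\infty)))$ for $\mu$-a.e.\ $\sigma$.

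The main obstacle I anticipate is the joint measurability of the limit $N$ and a clean justification that the pointwise-in-$\sigma$ limits truly paste into a single local martingale field representing the $X$-valued limit; this is delicate since $X$ need not be separable and $\mathcal D_b([0,\infty))$ is nonseparable. I would handle this by working with the strong measurability from $N_0=0$ and the explicit construction of $N$ as a telescoping series $N^1+\sum_k(N^{n_{k+1}}-N^{n_k})$, which converges absolutely pointwise a.e.\ in $(\omega,\sigma,t)$ by the BDG step, sidestepping any need to select measurable versions.
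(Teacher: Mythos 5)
Your argument is correct and is essentially the proof the paper relies on: the paper does not prove this lemma itself but defers it to \cite[Section 4]{VY18}, where the same strategy is used --- pass to a rapidly convergent subsequence, apply the scalar Davis/Burkholder--Davis--Gundy inequality at $p=1$ pointwise in $\sigma$, and control the telescoping tails via the Fatou property (v) of the Banach function space norm. The one step you should make explicit is the passage from $\sum_n\bigl\|[N^{n+1}-N^n]_\infty^{1/2}\bigr\|_{L^1(\Omega;X)}<\infty$ to $\sum_n\mathbb E\,[N^{n+1}(\cdot,\cdot,\sigma)-N^n(\cdot,\cdot,\sigma)]_\infty^{1/2}<\infty$ for $\mu$-a.e.\ $\sigma$ (which you need before invoking the scalar inequality for each fixed $\sigma$): this requires identifying the $X$-valued Bochner integral over $\Omega$ with the pointwise-in-$\sigma$ integral, and that identification is exactly where the continuous embedding $X\hookrightarrow L^0(S)$ of Definition~\ref{def:Bfs}(vi) enters.
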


\begin{proof}[Proof of Theorem \ref{thm:BDGforBFSpgeq1}] We will consider separately the cases $p>1$ and $p=1$.

{\em Case $p>1$.} This case was covered in \cite{VY18}. Nevertheless, we wish to notice that by modifying the proof from \cite{VY18} by using Proposition \ref{prop:BDGforBFSdiscretecase} one can obtain better behavior of the equivalence constants in \eqref{eq:BDGforBFSpgeq1}. Namely, by exploiting the same proof together with  Proposition \ref{prop:BDGforBFSdiscretecase} and Remark \ref{rem:controlofconstantsBDGforBFSdis} one obtains that for any $p'\in (1,\infty)$  there exist positive $C_{p', X}$ and $c_{p',X}$ (the same as in Remark \ref{rem:controlofconstantsBDGforBFSdis}) such that for any $1< p\leq p'$
\begin{equation}\label{eq:controlofconstantsBDGforBFScont}
  c_{p', X} \mathbb E \|[N]_{\infty}^{1/2}\|^p \leq \mathbb E \sup_{t\geq 0} \|M_t\|^p \leq C_{p', X} \mathbb E \|[N]_{\infty}^{1/2}\|^p.
\end{equation}

{\em Case $p=1$.} By Theorem \ref{thm:WeisztogenUMD} there exists a sequence $(M^n)_{n\geq 1}$ of uniformly bounded $X$-valued martingales such that 
\begin{equation}\label{eq:proofofthmDGforBFSpgeq1MntoM}
 \mathbb E \sup_{t\geq 0} \|M_t-M^n_t\| \to 0,\;\;\; n\to \infty.
\end{equation}
Since $M^n$ is uniformly bounded for any $n\geq 1$, $\mathbb E \sup_{t\geq 0} \|M^n_t\|^2 <\infty$, so by Case $p>1$ there exists a local martingale field $N^n$ such that $N^n(\omega, t,\cdot) = M^n_t(\omega)$ for all $t\geq 0$ for a.a.\ $\omega \in \Omega$. By \eqref{eq:controlofconstantsBDGforBFScont} one has that there exist positive constants $C_X$ and $c_X$ such that for all $m, n\geq 1$
\[
  c_{X} \mathbb E \|[N^n-N^m]_{\infty}^{1/2}\| \leq \mathbb E \sup_{t\geq 0} \|M^n_t- M^m_t\| \leq C_{X} \mathbb E \|[N^n-N^m]_{\infty}^{1/2}\|,
\]
hence due to \eqref{eq:proofofthmDGforBFSpgeq1MntoM} $(N^n)_{n\geq 1}$ is a Cauchy sequence in $\textnormal{MQ}^1(X)$. Since by Lemma \ref{lem:MQ1(X)isBanach} the linear space $\textnormal{MQ}^1(X)$ endowed with the norm \eqref{eq:norminMQ1(X)} is Banach, there exists a limit $N$ of $(N^n)_{n\geq 1}$ in $\textnormal{MQ}^1(X)$.

Let us show that $N$ is the desired local martingale field. Fix $t\geq 0$. We need to who that $N(\cdot,t,\cdot) = M_t$ a.s.\ on $\Omega$. First notice that by the last part of Lemma \ref{lem:MQ1(X)isBanach} there exists a subsequence of $(N^n)_{n\geq 1}$ which we will denote by $(N^n)_{n\geq 1}$ as well such that $N^n(\cdot, t, \sigma) \to N(\cdot, t, \sigma)$ in $L^1(\Omega)$ for a.e.\ $\sigma\in S$. On the other hand by Jensen's inequality
\[
 \bigl\|\mathbb E |N^n(\cdot,t,\cdot) - M_t|\bigr\| =  \bigl\|\mathbb E |M^n_t - M_t|\bigr\| \leq \mathbb E \|M^n_t- M_t\| \to 0,\;\;\;\; n\to \infty.
\]
Hence $N^n(\cdot,t,\cdot)\to M_t$ in $X(L^1(\Omega))$, and thus by Definition \ref{def:Bfs}$(vi)$ in $L^0(S;L^1(\Omega))$. Therefore we can find a subsequence of $(N^n)_{n\geq 1}$ (which we will again denote by $(N^n)_{n\geq 1}$) such that $N^n(\cdot,t,\sigma)\to M_t(\sigma)$ in $L^1(\Omega)$ for a.e.\ $\sigma\in S$ (here we use that fact that $\mu$ is $\sigma$-finite), so $N(\cdot, t, \cdot) = M_t$ a.s.\ on $\Omega \times S$, and consequently by Definition \ref{def:Bfs}$(iii)$, $N(\omega, t, \cdot) = M_t(\omega)$ for a.a.\ $\omega \in \Omega$. 

Let us finally show \eqref{eq:BDGforBFSpgeq1}. Since $N^n \to N$ in $\textnormal{MQ}^1(X)$ and by \eqref{eq:proofofthmDGforBFSpgeq1MntoM}
\[
  \mathbb E \|[N]_{\infty}^{1/2}\| = \lim_{n\to \infty} \mathbb E \|[N^n]_{\infty}^{1/2}\| \eqsim_X \lim_{n\to \infty}\mathbb E \sup_{t\geq 0} \|M^n_t\| = \mathbb E \sup_{t\geq 0} \|M_t\|,
\]
which terminates the proof.
\end{proof}

\begin{remark}
 It was shown in \cite{VY18} that in the case $p>1$ the equivalence \eqref{eq:BDGforBFSpgeq1} can be strengthen. Namely, in this case one can show that
 \begin{equation}\label{eq:RdFsupinside}
  \mathbb E  \bigl\|\sup_{t\geq 0} |M_t|\bigl\|^p \eqsim_{p, X} \mathbb E \|[N]_{\infty}^{1/2}\|^p,
 \end{equation}
 i.e.\ one has the same equivalence with a pointwise supremum in $S$.
The techniques that provide such an improvement were discovered by Rubio de Francia in \cite{Rubio86}. Unfortunately, it remains open whether \eqref{eq:RdFsupinside} holds for $p=1$. Surprisingly, \eqref{eq:RdFsupinside} holds for $p=1$ and for $X=L^1(S)$ by a simple Fubini-type argument, so it might be that \eqref{eq:RdFsupinside} holds for $p=1$ even for other nonreflexive Banach spaces.
\end{remark}

\section*{Acknowledgment}

The author would like to thank Mark Veraar and Jan van Neerven for inspiring conversations, careful reading of the paper, and useful suggestions. The author thanks Adam Os\k{e}kowski for  discussing inequality \eqref{eq:morethanwds12stul'ev}. We also thank Stanis\l{}aw Kwapie\'n for his hint on Proposition \ref{prop:Gaustype2=1=Hilbert} and Emiel Lorist for his helpful comments on Banach function spaces (see Section \ref{subsec:UMDBanachfs}). The author thanks Ben Goldys, Carlo Marinelli, and the anonymous referees for their useful suggestions.

\bibliographystyle{plain}

\def\cprime{$'$} \def\polhk#1{\setbox0=\hbox{#1}{\ooalign{\hidewidth
  \lower1.5ex\hbox{`}\hidewidth\crcr\unhbox0}}}
  \def\polhk#1{\setbox0=\hbox{#1}{\ooalign{\hidewidth
  \lower1.5ex\hbox{`}\hidewidth\crcr\unhbox0}}} \def\cprime{$'$}

\end{document}